\definecolor{mygreen}{rgb}{0.0, 0.5, 0.0} 
 \newtheorem{theorem}{Theorem}[section]
 \newtheorem{corollary}[theorem]{Corollary}
 \newtheorem{lemma}[theorem]{Lemma}
 \newtheorem{proposition}[theorem]{Proposition}
 \newtheorem{definition}[theorem]{Definition}
\newtheorem{remark}[theorem]{Remark}
\newtheorem{example}[theorem]{Example}
\newcommand{\defi}{:=}
\newcommand{\N}{{\mathbb N}}             
\newcommand{\Q}{{\mathbb Q}}            
\newcommand{\Z}{{\mathbb Z}}
\newcommand{\R}{{\mathbb R}}
\newcommand{\til}{\widetilde}
\newcommand{\loc}{\mathop\mathrm{loc}}
\newcommand{\tor}{\mathop\mathrm{tor}}
\newcommand{\rank}{\mathop\mathrm{rank}\nolimits}
\newcommand{\comp}{\mathop{\rm comp}\nolimits}
\renewcommand{\hat}{\widehat} 
\newcommand{\ssk}{\smallskip}       
\newcommand{\msk}{\medskip}
\renewcommand{\div}{\mathop{\rm div}\nolimits}
\long\def\alert#1{\parindent2em\smallskip\hbox to\hsize%
{\hskip\parindent\vrule%
\vbox{\advance\hsize-2\parindent\hrule\smallskip\parindent.4\parindent%
\narrower\noindent#1\smallskip\hrule}\vrule\hfill}\smallskip\parindent0pt}
\newcommand{\gp}[1]{\langle#1\rangle}
\newcommand{\gen}[1]{\overline{\gp{#1}}}
\newcommand{\prf}[1]{\Z(#1^\infty)}
\newcommand{\ignore}[1]{} 
\newcounter{alert}\setcounter{alert}0
\newcommand{\im}{in\-duc\-ti\-ve\-ly mo\-no\-the\-tic}
\newcommand{\fg}{finitely generated}
\newcommand{\tqh}{to\-po\-lo\-gi\-cal\-ly qua\-si\-ha\-mil\-ton\-ian}
\newcommand{\stqh}{strong\-ly \tqh}
\newcommand{\stqhg}{\stqh\ group}
\newcommand{\tf}{tor\-sion-free}
\newcommand{\tM}{to\-po\-lo\-gi\-cal\-ly mo\-du\-lar}
\newcommand{\tMg}{\tM\ group}
\newcommand{\locprod}{\prod^{\rm loc}}
\newcommand{\lc}{lo\-cal\-ly com\-pact}
\newcommand{\lca}{\lc\ a\-bel\-ian}
\newcommand{\colim}{{\rm colim}}
\newcommand{\prank}{\hbox{$p$-rank}}
\newcommand{\lead}{\leaders\hbox to 1.5ex{\hss${.}$\hss}\hfill}
\newcommand{\arr}{\hbox to 20pt{\rightarrowfill}}
\newcommand{\larr}{\hbox to 20pt{\leftarrowfill}}
\newcommand{\psyl}[1]{$#1$-Syl\-ow\,}
\newcommand{\socle}{\text{\rm socle}}
\renewcommand{\implies}{{\hbox{$\Rightarrow$}}}
\newcommand{\ZppZp}{(\Z(p^2),p\Z(p^2))^{{\rm loc,}\, \N}}
\begin{document}
\setpagewiselinenumbers

\title[When is the Sum $\ldots$]{When is the Sum of Two Closed Subgroups Closed in a \\ Locally Compact Abelian Group?}

\author{Wolfgang Herfort}

\address{Technische Universit\"at Wien, Wiednerhauptstra\ss e 8-10/E101, Austria,
Email: wolfgang.herfort@tuwien.ac.at}

\author{Karl H. Hofmann}

\address{Fachbereich Mathematik, Technische Universit\"at Darmstadt, Schlossgartenstr. 7, 64289 Darmstadt, Germany, Email: hofmann@mathematik.tu-darmstadt.de}

\author{Francesco G. Russo}

\address{Department of Mathematics and Applied Mathematics, University of Cape Town, South Africa, Email: francescog.russo@yahoo.com}

\begin{abstract}
Locally compact abelian groups are classified in which the sum of any two
closed subgroups is itself closed. 
This amounts to reproving and extending
results by Yu.~N.~Mukhin from 1970. 
Namely we contribute a complete classification of
all totally disconnected \lca\ groups with $X+Y$ 
closed for any closed subgroups $X$ and $Y$.
\end{abstract}
\date{\today}
\keywords{
LCA-group, Dedekind closed subgroup lattice,
MSC[2010] 
}
\maketitle
\newcommand{\dach}{\hbox{$\hat{~}$}}
\newif\ifdraft


\section{Introduction and Main Results}
\label{s:intro-ab}
It was {\sc R. Dedekind}, who
in 1877 proved  the modular law  
for the subgroup lattice of 
a certain abelian group (see \cite{Dedekind77}). 
However, his proof works for {\em any} abelian group.
In 1970 Yu.~N.~Mukhin investigated 
the analogous property for \lca\ groups in \cite{muk2}.
The {\em closed subgroup lattice} $L(G)$ of a topological group 
is its set of closed subgroups endowed with
{\em join} given as $A\vee B\defi \gen{A\cup B}$ 
and {\em meet} as $A\wedge B\defi A\cap B$ for $A$ and
$B$ any closed subgroups. Then $G$ is a {\em \tMg} 
provided the modular law holds for
any closed subgroups $A$, $B$ and $C$ of $G$ with $A$ a subgroup of $C$:
\[A\vee (B\wedge C)=(A\vee B)\wedge C\]
Note that a group is {\em modular} if, and only if, 
its lattice of closed subgroups does not contain
a sublattice isomorphic to $E_5$ (cf. \cite[2.1.2 Theorem]{schmidt}),
i.e., geometrically, a pentagon (see also Remark \ref{rem:not-M} below).

\[\xymatrix@R=2mm@C=3mm{& \bullet\ar@{-}[ld]\ar@{-}[rdd]& \\
            \bullet\ar@{-}[dd]& & \\
                             & & \bullet\ar@{-}[ldd]\\ 
            \bullet\ar@{-}[rd]& & \\
                 &\bullet&}\]

Mukhin, in the same paper, also classifies all \lca\ groups 
$G$ for which $X+Y$ is closed whenever $X$ and $Y$ are closed 
subgroups of $G$ and we will call any \lca\ group
$G$ with this property {\em \stqh}. 
 It follows from the definitions that every \stqhg\ is \tM.

We shall derive his results with slightly different approach, but
essentially the same methods of proof. Our motivation comes from
studying nonabelian \lc\ groups satisfying an analogous property,
see \cite{HHR-stqh-nonab}.

Let us fix some notation. We mostly use the notation from \cite{hofmor}.
The {\em $\Z$-rank} of a discrete abelian 
group $A$ is the {\em torsion free rank}, i.e., the $\Q$-dimension
of $A\otimes_\Z\Q$. When $A$ is \tf\ then
the $\Z$-rank of $A$ is the dimension of
its dual $\hat A$ (see e.g. Theorem 8.22
in \cite{hofmor}). 
We shall call a \lca\ group $A$
{\em periodic} if it is both totally disconnected and the union
of its compact subgroups. 
We shall use additive notation unless
stated differently. For $p$ a prime, an element $x$ 
in a \lca\ group $G$ with $p^kx\to0$ as $k$ tends to infinity,
is called a {\em $p$-element}. As discussed on page 48 in \cite{HHR18}
this definition is equivalent to saying that $\gen x$ is
a pro-$p$ group. 
In a periodic \lca\ group $A$ 
the set of $p$-elements is a 
closed subgroup $A_p$ -- its {\em $p$-primary component} 
(or \psyl p subgroup, 
see \cite[Definitions 8.7]{hofmor}.). 
In a periodic abelian group, 
for a set of primes $\pi$ the {\em $\pi$-primary component} $A_\pi$
of a periodic group $A$ is defined 
to be the subgroup of $A$ topologically generated by all
$p$-primary components with $p\in\pi$. For a periodic group $A$
we denote by $\pi(A)$ the
set of all primes $p$ with $A_p$ not trivial. 
If $\pi(A)=\{p\}$, for a single prime $p$, then $A$ is a {\em $p$-group}.
For a compact group our definition 
agrees with \cite[Definition 8.7]{hofmor}. 
For any fixed prime $p$ the kernel of the map $x\mapsto px$  
is the {\em $p$-socle} of $A$ and will be denoted by $\socle_p(A)$ 
or just by $\socle(A)$ if there is no danger of confusion 
(see \cite[Definition A1.20]{hofmor}.).

A \lca\ group $A$ will be termed {\em \fg} if there is a finite subset
$X$ of $A$ which generates $A$ {\em topologically}, 
i.e., $A=\gen X$. 
We say that a \lca\ $p$-group $A$ has finite \prank\ $n$ if, 
and only if, 
every \fg\ closed subgroup $H$ has a set of topological generators of
cardinality $n$ and, in addition, $A$ contains a \fg\ subgroup
which cannot be generated topologically 
with fewer elements. If $G$ contains \fg\ subgroups of arbitrary large rank
then the \prank\ of $G$ is said to be infinite. 
For a \fg\ compact $p$-group $A$ this definition
agrees with $d(A)$, the minimum cardinality of
a topological set of generators of $A$, 
 as given in \cite[p.~43]{ribes-zalesskii}, i.e.,
$d(A)=\rank_p(A)$. Moreover, by \cite[Proposition 4.3.6]{ribes-zalesskii},
for any closed subgroup $H$ of $A$
one has $d(H)\le d(A)$. Therefore for any finite \prank\ abelian $p$-group
$G$ and closed subgroup $H$ one has $\rank_p(H)\le \rank_p(G)$.

It is a consequence of
\cite[Lemma 3.91]{HHR18} that our definition is equivalent to the one
given by  \v Carin in \cite{Charin66}. 
If $A$ is not topologically \fg, we say that
$A$ has {\em infinite \prank}. A proposal for defining the \prank\
of an arbitrary \lca\ $p$-group has been recently made
in \cite[Section 10]{HHR-comfort}, which got reproduced in \cite[3.10, p.~93]{HHR18}. 
For a prime $p$ the symbols $\Q_p$, $\Z_p$, $\prf p$ denote respectively
the additive group of the field of $p$-adic rationals, its closed subgroup
of $p$-adic integers, and the factor group $\Q_p/\Z_p$ 
(see \cite[p.~28 ff]{hofmor}.). 

The set of elements of finite order of an abelian group $G$ will
be denoted by $\tor(G)$  and we let $\div(G)$ stand for the 
largest divisible subgroup of $G$.
The main properties of $\tor(G)$ and $\div(G)$
are discussed in \cite[Appendix 1]{hofmor}.

Our main results are as follows: 

\begin{theorem}\label{t:mainA}  
The following statements for a \lca\ $p$-group $G$ are equivalent:
\begin{enumerate}[\rm(a)]
\item $G$ is \tM.
\item For $U$ an open compact subgroup exclusively
one of the following holds
  \begin{enumerate}[\rm({b.}\rm 1)]
   \item $U$ has finite \prank. Then $\tor(G)$ is discrete and
        $G/\tor(G)$ has finite \prank.
   \item $U$ has infinite \prank. Then $\div(G)$ is closed,
        $G/U$ and  $\div(G)$ both have finite \prank, 
        and, $G/\div(G)$ is compact. 
  \end{enumerate}
\item $G$ is \stqh.
\end{enumerate}
\end{theorem}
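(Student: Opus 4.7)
The plan is to prove the circular chain (c) $\Rightarrow$ (a) $\Rightarrow$ (b) $\Rightarrow$ (c). The implication (c) $\Rightarrow$ (a) was already observed in the introduction: if every sum $X+Y$ of closed subgroups is again closed, then the join $X\vee Y$ in $L(G)$ coincides with the set-theoretic sum $X+Y$, and the modular law for $L(G)$ descends from the purely algebraic modular law of the ambient abelian group.

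For (a) $\Rightarrow$ (b), I would first invoke van Dantzig's theorem to produce a compact open pro-$p$ subgroup $U$ of $G$, note that $G/U$ is a discrete abelian $p$-group, and then proceed contrapositively by means of the pentagon criterion cited from Schmidt: $G$ fails to be \tM\ iff $L(G)$ contains a sublattice isomorphic to $E_5$. The case analysis is governed by $\rank_p(U)$. If $\rank_p(U)$ is finite but either $\tor(G)$ fails to be discrete or $G/\tor(G)$ fails to have finite \prank, I would manufacture inside $G$ a closed copy of $\prf p$ together with a closed copy of $\Z_p$ whose interaction with a suitable cyclic subgroup assembles a pentagon inside $L(G)$. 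If instead $\rank_p(U)$ is infinite, I would check similarly that the failure of any one of the three assertions in (b.2) -- namely that $\div(G)$ is closed, that $\div(G)$ and $G/U$ both have finite \prank, and that $G/\div(G)$ is compact -- produces a pentagon. Pontryagin duality between $G$ and $\hat G$ is a natural bookkeeping device, converting statements about closed subgroups into dual statements about quotients.

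For (b) $\Rightarrow$ (c), I would verify in each subcase directly that $X+Y$ is closed for arbitrary closed subgroups $X,Y\le G$. In case (b.1), the discreteness of $\tor(G)$ together with the finite $\Z$-rank of $G/\tor(G)$ reduces the question to a finite-dimensional computation in a compactly generated quotient where closed subgroups of $\Z_p^n$ and of a finitely generated discrete torsion complement manifestly sum to closed subgroups. In case (b.2), the compactness of $G/\div(G)$ ensures that the image of $X+Y$ modulo $\div(G)$ is closed, while inside the finite-\prank\ divisible part $\div(G)$ -- a finite direct product of copies of $\Q_p$ and $\prf p$ -- a direct inspection of the lattice shows that sums of closed subgroups are closed; lifting through the quotient map then yields the conclusion.

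The principal obstacle lies in (a) $\Rightarrow$ (b) in the infinite-\prank\ case, specifically in establishing that topological modularity forces $\div(G)$ to be \emph{closed}. A priori the divisible hull of a LCA $p$-group need not be closed, and the pentagon witnessing non-closedness must be engineered so that all five of its vertices are indeed closed subgroups of $G$. I expect this step to rest on the structural machinery for LCA $p$-groups of prescribed \prank\ developed in \cite{HHR18}, combined with the monotonicity of \prank\ under closed subgroups of compact $p$-groups recorded in the introduction, and on the characterization, dually, of closedness of divisible subgroups via a compactness property of the dual reduced quotient.
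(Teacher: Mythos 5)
Your overall architecture --- the chain (c)$\Rightarrow$(a)$\Rightarrow$(b)$\Rightarrow$(c), with (c)$\Rightarrow$(a) obtained by noting that closedness of sums makes $X\vee Y=X+Y$ and reduces everything to Dedekind's algebraic modular law --- coincides with the paper's. But both substantive implications are only gestured at, and each contains a concrete gap. In (a)$\Rightarrow$(b) you misplace the difficulty and omit the decisive construction. When $\rank_p(U)$ is finite, no pentagon is needed at all: one shrinks $U$ to a \tf\ compact open subgroup, whence $\tor(G)$ is discrete and $G/\tor(G)$ has finite \prank\ by Lemma \ref{l:GU-prank}; this holds for \emph{every} \lca\ $p$-group with such a $U$, independently of modularity. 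The genuine crux of (b.2) is not the closedness of $\div(G)$ but the claim that $G/U$ has finite \prank; once that is established, duality reduces $\hat G$ to case (b.1) and $\div(G)=\tor(\hat G)^\perp\cong\Q_p^k\oplus\prf p^l$ with the remaining assertions falling out. To get finiteness of $\rank_p(G/U)$ the paper does argue by contradiction, but the pentagon is not assembled from a copy of $\prf p$ and a copy of $\Z_p$: it lives inside the specific groups $\Z(p)^{(\N)}\oplus\Z(p)^\N$ and $\ZppZp$, with the critical fifth vertex being the \emph{graph of the dense embedding} $\Z(p)^{(\N)}\hookrightarrow\Z(p)^\N$ (Examples \ref{ex:P+S} and \ref{ex:p-quadrat}), and the real labour (Lemma \ref{l:p2p} and Claims 1--4 in the proof of Theorem \ref{t:M-abelian-p}) is to manufacture such a local product as a subquotient of $G$. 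Your sketch gives no indication of how the five closed vertices of a pentagon would be produced, and the \prank-monotonicity and duality bookkeeping you invoke will not by themselves yield them.

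For (b.2)$\Rightarrow$(c) your lifting argument fails as stated: knowing that $(X+Y+\div(G))/\div(G)$ is closed in the compact group $G/\div(G)$ shows only that $X+Y+\div(G)$ is closed, and since $X+Y$ need not contain $\div(G)$, closedness of $X+Y$ does not follow via Lemma \ref{l:quotient-closed}. The paper instead uses Lemma \ref{l:XYU} to reduce to the case where $X$ and $Y$ are discrete of finite \prank, splits each as a finite group plus a divisible torsion group of finite \prank, and then invokes Lemma \ref{l:prf}: a subgroup of an \lca\ $p$-group that is \emph{algebraically} isomorphic to $\prf p^k$ is automatically discrete, hence closed. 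Some such device is indispensable, because the a priori non-closed part of $X+Y$ is precisely this divisible summand; a ``direct inspection of the lattice'' of $\div(G)$ alone cannot detect it.
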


\begin{theorem}\label{t:M-periodic}
A periodic \lca\ group $G$ is \tM\ if, and only if, 
for every prime $p$ the respective $p$-component is \tM.
\end{theorem}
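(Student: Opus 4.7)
The forward implication is immediate from lattice theory: for every prime $p$, the lattice $L(G_p)$ of closed subgroups of $G_p$ coincides with the principal interval $[\{0\},G_p]$ of $L(G)$, and every interval in a modular lattice is modular. Hence topological modularity of $G$ descends to each $G_p$.

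For the converse, the plan is to exploit the primary decomposition of a periodic \lca\ group. Fixing a compact open subgroup $U\le G$, one has
\[
G \;=\; \prod_p^{\rm loc}(G_p,\,G_p\cap U),
\]
and every closed subgroup $H\le G$ inherits the parallel decomposition
\[
H \;=\; \prod_p^{\rm loc}(H_p,\,H_p\cap U),\qquad H_p\defi H\cap G_p.
\]
I would then consider the order-preserving map
\[
\Phi\colon L(G)\longrightarrow \prod_p L(G_p),\qquad H\longmapsto (H_p)_p,
\]
and show that $\Phi$ is an injective lattice homomorphism. Injectivity is immediate from the decomposition above, and preservation of meet is clear since intersection with the fixed subgroup $G_p$ commutes with binary intersection.

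The main obstacle is preservation of join, namely the identity
\[
(A\vee B)\cap G_p \;=\; A_p\vee B_p
\]
for closed subgroups $A,B\le G$. The inclusion $\supseteq$ is automatic. For $\subseteq$, I would write each element of $A+B$ coordinate-wise in the local direct product as $a+b=(a_p+b_p)_p$ with $a_p\in A_p$, $b_p\in B_p$, noting that $a_p+b_p\in (A_p+B_p)\cap U$ for all but finitely many $p$. Hence
\[
A+B \;\subseteq\; \prod_p^{\rm loc}(A_p+B_p,\,(A_p+B_p)\cap U)\;\subseteq\;\prod_p^{\rm loc}(A_p\vee B_p,\,(A_p\vee B_p)\cap U),
\]
and the last local direct product is closed in $G$. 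Passing to closures in $G$ then yields $A\vee B\subseteq \prod_p^{\rm loc}(A_p\vee B_p,\,(A_p\vee B_p)\cap U)$, and intersecting with $G_p$ extracts precisely $A_p\vee B_p$.

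Once $\Phi$ is recognized as a lattice embedding, the conclusion is formal: a product of modular lattices is modular, and sublattices of modular lattices are modular. Since each $L(G_p)$ is modular by hypothesis, $\Phi(L(G))$ is modular, hence so is $L(G)$. I expect the primary decomposition for an arbitrary closed subgroup of a periodic \lca\ group, together with the closedness of the local direct product inside $G$, to be standard in, or immediate consequences of, the structure theory in \cite{hofmor}, so the genuine content of the proof is concentrated in the join identity above.
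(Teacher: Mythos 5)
Your proposal is correct and follows essentially the same route as the paper: both rest on the primary decomposition of an arbitrary closed subgroup of a periodic group as a local product of its $p$-components and on the fact that join and meet are then computed componentwise, from which the modular law is inherited prime by prime. The paper only sketches this (deferring to Mukhin), whereas you package the componentwise computation as a lattice embedding of $L(G)$ into $\prod_p L(G_p)$ and actually justify the key join identity $(A\vee B)\cap G_p=A_p\vee B_p$ via the closedness of the local product $\prod_p^{\rm loc}(A_p\vee B_p,(A_p\vee B_p)\cap U)$ — a harmless and, if anything, more complete rendering of the same argument.
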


The next two results  exhibit the structure of 
any torsion \stqhg\ and the splitting of the torsion subgroup
in \tMg s. 

\begin{theorem}\label{t:abelian-tor-stqh}
Let $A$ be a \lca\ torsion group.
The following statements are equivalent:
\begin{enumerate}[\rm(a)]
\item The group $A$ is \tM.
\item There
is a partition 
\[\pi(A)=\delta\cup \phi\] 
and all of the following holds:
\begin{enumerate}[\rm({b}.1)]
\item The set of primes $\phi$ is finite and
      \[A_\phi=D_\phi\oplus V_\phi\]
      for $D_\phi$ discrete and divisible 
      and $V_\phi$ compact and open in $G$. 
\item $A_\delta$ is a discrete subgroup of $A$.
\end{enumerate}
\item The group $A$ is \stqh.
\end{enumerate}
\end{theorem}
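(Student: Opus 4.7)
My plan is to exploit the $p$-primary theory of Theorems~\ref{t:mainA} and~\ref{t:M-periodic} and reduce the classification of torsion \tM\ groups to a prime-by-prime analysis. The implication (c)~$\Rightarrow$~(a) is immediate: whenever the sum of two closed subgroups is closed it coincides with their join in $L(A)$, and the abstract modular law holds in every abelian group. Thus the substance of the theorem lies in (a)~$\Rightarrow$~(b) and (b)~$\Rightarrow$~(c).

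For (a)~$\Rightarrow$~(b), I first apply Theorem~\ref{t:M-periodic} to reduce topological modularity of $A$ to topological modularity of each $A_p$. Fix a compact open subgroup $U$ of $A$. Since $U$ is compact torsion abelian, its primary decomposition $U=\prod_p U_p$ has only finitely many nontrivial factors---a finiteness forced by the torsion hypothesis on $U$---and I denote this set of primes by $\phi_0$. For $p\notin\phi_0$ one has $A_p\cap U=0$, so $A_p$ inherits the discrete topology; this prime is placed in $\delta$. For $p\in\phi_0$ I apply Theorem~\ref{t:mainA} to $A_p$: if case (b.1) of that theorem holds, then $A_p$ itself is discrete and $p$ is placed in $\delta$; if case (b.2) holds, then $\div(A_p)$ is closed of finite $p$-rank and $A_p/\div(A_p)$ is compact, and $p$ is placed in $\phi$. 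By construction $\phi\subseteq\phi_0$ is finite and $\pi(A)=\delta\cup\phi$. To produce the splitting $A_\phi=D_\phi\oplus V_\phi$, observe that $\div(A_p)$ is algebraically isomorphic to $\prf p^{n_p}$, and every LCA topology on $\prf p^{n_p}$ is necessarily discrete, since any compact subgroup of that group is finite and any compact open subgroup would therefore be a finite open subgroup, forcing the entire topology to be discrete. Hence $\div(A_p)$ is a closed discrete divisible subgroup with compact quotient, and divisibility provides a continuous section, yielding $A_p=D_p\oplus V_p$ with $V_p$ compact open in $A_p$. Taking $D_\phi=\bigoplus_{p\in\phi}D_p$ and $V_\phi=\bigoplus_{p\in\phi}V_p$ gives the announced decomposition; $V_\phi$ is open in $A$ because $A/V_\phi\cong D_\phi\oplus A_\delta$ is a direct sum of discrete groups, and $A_\delta$ itself is then discrete because $V_\phi$ is an open neighborhood of $0$ meeting $A_\delta$ trivially.

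For (b)~$\Rightarrow$~(c), the explicit structure $A=D_\phi\oplus V_\phi\oplus A_\delta$ reduces closedness of $X+Y$ to a local problem. Passing to the discrete quotient $A/V_\phi$ renders the sum of images automatically closed, while $(X+Y)\cap V_\phi$ sits inside the compact group $V_\phi=\bigoplus_{p\in\phi}V_p$, a finite direct sum of compact \tM\ $p$-groups; each $V_p$ is \stqh\ by Theorem~\ref{t:mainA}, and a finite direct sum of compact \stqh\ groups is visibly \stqh. Compactness of $V_\phi$ then lifts closedness from the quotient back to $A$. The main obstacle I anticipate is the topological splitting $A_p=\div(A_p)\oplus V_p$ in case (b.2) of Theorem~\ref{t:mainA}: divisible subgroups split off algebraically, but extracting a \emph{continuous} retraction in this LCA setting relies on the discreteness of $\div(A_p)$ together with the compactness of $A_p/\div(A_p)$, and quite possibly also on the modular hypothesis itself.
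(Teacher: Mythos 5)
Your treatment of (c)$\Rightarrow$(a) and of (a)$\Rightarrow$(b) follows essentially the paper's route: reduce to the $p$-primary components, invoke the $p$-group classification (Theorem~\ref{t:mainA}), observe that $\div(A_p)$ is algebraically $\prf p^{n_p}$ and hence discrete in any \lca\ topology (this is exactly Lemma~\ref{l:prf}), and split it off by divisibility (Proposition~\ref{p:referee}). Your partition differs cosmetically from the paper's ($\phi$ there is simply $\pi(U)$), and your closing sentence is circular as written --- you derive openness of $V_\phi$ from discreteness of $A_\delta$ and discreteness of $A_\delta$ from openness of $V_\phi$ --- but both facts are easily established directly ($A_\delta\cap U$ is a product of finitely many finite groups $U_p$, hence finite and open in $A_\delta$; and $\bigoplus_{p\in\phi}(U_p\cap V_p)$ has finite index in $U$ and lies in $V_\phi$). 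So this half is repairable and essentially matches the paper.

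The direction (b)$\Rightarrow$(c) has a genuine gap. Your mechanism is: $A/V_\phi$ is discrete, so the image of $X+Y$ there is closed, and ``compactness of $V_\phi$ lifts closedness back.'' Since $V_\phi$ is an \emph{open} subgroup, what is actually true is that $X+Y$ is closed if and only if $(X+Y)\cap V_\phi$ is closed --- but you give no argument for the latter. The observation that $V_\phi$ is \stqh\ (trivially, being compact) is of no use here, because the \stqh\ property only controls sums of two \emph{closed} subgroups of $V_\phi$, whereas $(X+Y)\cap V_\phi$ can strictly contain $(X\cap V_\phi)+(Y\cap V_\phi)$: this is precisely the failure mode of Example~\ref{ex:P+S} and Lemma~\ref{l:cyclic-stqh}, where $X$ and $Y$ are discrete (so each meets the compact open subgroup trivially) yet $X+Y$ meets it in a dense, non-closed subgroup. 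Knowing that $X+Y+V_\phi$ is closed and that $(X\cap V_\phi)+(Y\cap V_\phi)$ is compact does not yield closedness of $X+Y$. The paper avoids this by decomposing along the primes: $X+Y=(X_\delta+Y_\delta)\oplus\bigoplus_{p\in\phi}(X_p+Y_p)$, where $X_\delta+Y_\delta$ is closed because $A_\delta$ is discrete and each $X_p+Y_p$ is closed because $A_p$ is \stqh\ by Corollary~\ref{c:M-abelian-p}; Lemma~\ref{l:coprime} then assembles these into closedness of $X+Y$. To rescue your approach you would at minimum need the reduction of Lemma~\ref{l:XYU} \emph{and} the coprimality of the primary components to rule out the graph-type configuration above; as written the key step is asserted, not proved.
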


\begin{theorem}\label{t:mainB} 
Let $G$ be a totally disconnected \lca\ group -- neither discrete
nor periodic. 

Then the following statements are equivalent.
\begin{enumerate}[\rm(a)]
\item $G$ is \tM. 
\item All of the following holds:
  \begin{enumerate}[\rm ({b.}1)]
  \item 
  $T=\tor(G)=\comp(G)$ is open in $G$ and $G/T$ is discrete
  and \tf\ of finite $\Z$-rank.
  \item The torsion subgroup $T=\tor(G)$ is \stqh.
  \end{enumerate}

Moreover, if $N$ is any closed subgroup of $G$, contained in $T$
then $\tor(G/N)=T/N$ and (b) holds for $G/N$ with
$T$ replaced by $T/N$. 

\item $G$ is \stqh.
\end{enumerate}
\end{theorem}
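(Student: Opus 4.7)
The plan is to establish (c)\,$\Rightarrow$\,(a)\,$\Rightarrow$\,(b)\,$\Rightarrow$\,(c) and then the \emph{moreover} clause. The implication (c)\,$\Rightarrow$\,(a) is immediate from the observation in Section~\ref{s:intro-ab} that every \stqhg\ is \tM.

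For (a)\,$\Rightarrow$\,(b), I fix a compact open subgroup $U \subseteq G$, decompose $U = \prod_p U_p$, and note that each $U_p$ is \tM\ since $L(U_p) \subseteq L(G)$ as a sublattice; Theorem~\ref{t:mainA} then constrains each $U_p$. Non-periodicity of $G$ yields $g \in G \setminus \comp(G)$, so $\gen{g}$ is a non-compact monothetic totally disconnected \lca\ group, hence discrete modulo a compact open subgroup and admitting $\Z$ as a quotient. A pentagon argument then shows that $U$ must be torsion: a hypothetical non-torsion $u \in U$ gives a procyclic $\gen{u}$ with a $\Z_p$-summand, and combining it with the $\Z$-copy generated by $g$ and a diagonal like $\gen{g+u}$ produces an $E_5$-sublattice of $L(G)$, mirroring the $\Z \times \Z_p$ pentagon $p\Z \times \{0\} \subsetneq \Z \times \{0\}$ against a dense diagonal. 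Hence $U \subseteq \tor(G)$, so $T \defi \tor(G)$ is open; openness together with $G/T$ being torsion-free then forces every compact element of $G$ into $T$, giving $T = \comp(G)$. A further pentagon argument, using a hypothetical infinite $\Z$-independent sequence in $G/T$, shows that $G/T$ has finite $\Z$-rank, completing (b.1). For (b.2), $T$ is closed (being open) with $L(T) \subseteq L(G)$ modular, so $T$ is periodic and \tM, hence \stqh\ by Theorems~\ref{t:M-periodic} and~\ref{t:abelian-tor-stqh}.

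For (b)\,$\Rightarrow$\,(c), given closed $X, Y \subseteq G$, set $X_0 \defi X \cap T$ and $Y_0 \defi Y \cap T$. Both are closed in $T$ and by (b.2) the sum $X_0 + Y_0$ is closed in $T$. Writing $\pi \colon G \to G/T$, the sum $\pi(X) + \pi(Y)$ is a subgroup of the discrete group $G/T$, hence closed. The crucial step is that $M \defi (X+Y) \cap T$ is closed in $T$: $M \supseteq X_0 + Y_0$, and the quotient $M/(X_0 + Y_0)$ is parametrized by lifts of $\pi(X) \cap \pi(Y)$, which the structure of $T$ from Theorem~\ref{t:abelian-tor-stqh} combined with the finite $\Z$-rank of $G/T$ render controllable. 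Once $M$ is closed, $X + Y$ decomposes as a union of cosets of $M$ in $G$ indexed by $\pi(X) + \pi(Y)$, and this union is closed in $G$ because $\pi$ is continuous and every subset of the discrete $G/T$ is closed.

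The \emph{moreover} clause is a lifting: for closed $N \subseteq T$, the subgroup $T/N$ is torsion and open in $G/N$, with $(G/N)/(T/N) \cong G/T$ still discrete torsion-free of finite $\Z$-rank. The equality $\tor(G/N) = T/N$ holds because any $g$ whose image in $G/N$ has finite order satisfies $mg \in N \subseteq T$ for some $m > 0$ and hence $g \in T$. The structure from Theorem~\ref{t:abelian-tor-stqh} for $T$ survives closed-quotient formation, so $T/N$ remains \stqh. The main obstacle throughout is the pentagon construction in (a)\,$\Rightarrow$\,(b): the subtle feature is that $\Z$ sits densely in $\Z_p$, so joins in $L(G)$ involve topological closures that must cooperate with the desired incidence relations of $E_5$, which requires a careful choice of generators $g$, $u$, $g+u$ and of open-compact refinements inside $\gen{u}$.
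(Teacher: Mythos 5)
Your overall architecture is sound, and several pieces are correct: (c)$\Rightarrow$(a) via Lemma \ref{l:stqh<tM}, the deduction $T=\comp(G)$ once $T$ is known to be open, the treatment of the \emph{moreover} clause, and the concluding observation in (b)$\Rightarrow$(c) that closedness of $M\defi(X+Y)\cap T$ forces closedness of $X+Y$ (since $T$ is open, $X+Y$ meets each clopen fibre of $G\to G/T$ in a single coset of $M$). However, there are two genuine gaps.

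First, the pentagon you propose for showing $U\subseteq\tor(G)$ does not work as stated. In $\Z t\oplus\Z_p$ with $k$ a topological generator of $\Z_p$, take $C=\Z t$, $A=p\Z t$, $B=\gp{t+k}$. Then $B\wedge C=\{0\}$, so $A\vee(B\wedge C)=p\Z t$; but $\overline{A+B}=\{mt+mk+pc: m\in\Z,\ c\in\Z_p\}$, and intersecting with $\Z t$ requires $mk\in p\Z_p$, i.e.\ $p\mid m$, so $(A\vee B)\wedge C=p\Z t$ as well --- modularity \emph{holds} for your configuration. The obstruction to using $p\Z$ is exactly that $p\Z_p\ne\Z_p$: the construction of Example \ref{ex:pK=K} needs a monothetic $K$ with $qK=K$, which for $K\cong\Z_p$ forces you to take a prime $q\ne p$ and the subgroup $q\Z t$ (this is the content of Lemma \ref{l:pC=C}). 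The error is repairable, but as written the step "a hypothetical non-torsion $u\in U$ produces an $E_5$" fails.

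Second, and more seriously, in (b)$\Rightarrow$(c) the entire difficulty is concentrated in the claim that $M=(X+Y)\cap T$ is closed, and your justification --- that $M/(X_0+Y_0)$ is "parametrized by lifts of $\pi(X)\cap\pi(Y)$" which the structure of $T$ and the finite $\Z$-rank of $G/T$ "render controllable" --- is not an argument. The image of the induced homomorphism $\pi(X)\cap\pi(Y)\to T/(X_0+Y_0)$ is a countable torsion quotient of a subgroup of $\Q^n$, and such subgroups of a periodic \lca\ group are not closed in general (compare Lemma \ref{l:cyclic-stqh}); one must actually exploit the structure of $T$ guaranteed by Theorem \ref{t:abelian-tor-stqh} (e.g.\ via Lemma \ref{l:prf} for the divisible part and finiteness of exponent for the compact part) to conclude. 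The paper avoids this head-on computation by first disposing of the cases where one or both of $X,Y$ are torsion, then factoring out the closed subgroup $\tor(X)+\tor(Y)$ (legitimate by the \emph{moreover} clause) so that $X$ and $Y$ become torsion-free, discrete, and of finite $\Z$-rank; then $(X+Y)\cap U$ is a bounded-rank subgroup of a finite-exponent compact group, hence finite, and $X+Y$ is discrete. You would need to supply an argument of comparable substance where you currently have only the word "controllable."
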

The preceding result corrects 
\cite[Theorem 14.32(b.3)]{HHR18}.

Using Pontryagin duality (see \cite[Chapter 7]{hofmor}) 
we shall deduce a structure theorem for
\lca\ \tM\ groups with nontrivial connected components, see
Theorem \ref{t:M-conn}.


The fact that not every periodic nondiscrete \tM\ group is \stqh,
will be shown in Lemma \ref{l:cyclic-stqh}.
A \lca\ group $A$ is {\em \im} provided
every finite subset of $A$ is contained in a monothetic subgroup
(see Definition \ref{d:img-per} below):

\begin{theorem}\label{t:ab-periodic-stqh}
For a \lca\ periodic group $A$ and open compact 
subgroup $U$ the following statements are equivalent:
\begin{enumerate}[\rm(A)]
\item $A$ is \stqh.
\item There is a partition of $\pi(A)$ into 
      4 disjoint subsets 
      $\delta$, $\gamma$, $\phi$, 
      and $\mu$ and all of the following holds:
          \begin{enumerate}[\rm(i)]
          \item $\delta\defi\{p\in\pi(A):A_p\cap U=\{0\}\}$ 
            and $A_\delta$ is a discrete subgroup of $A$.
          \item $\gamma\defi\{p\in\pi(A):A_p\le U\}$ and $A_\gamma$
            is a profinite subgroup of $A$.
          \item $\phi\defi\{p\in\pi(A)\setminus\{\delta\cup\gamma\}:
             \rank_p(A_p)\ge2\}$. 

            The set $\phi$ is finite and
            for all $p\in\phi$ the \psyl p subgroup $A_p$ is \stqh.
          \item $A_\mu$ is \im.
          \item $A=A_\delta\oplus A_\gamma\oplus A_\phi\oplus A_\mu$ 
            topologically and algebraically.
          \end{enumerate}
\end{enumerate}
\end{theorem}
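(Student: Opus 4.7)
The plan is to reduce to the primary components $A_p$ and invoke Theorem~\ref{t:mainA}. For the implication (A)$\Rightarrow$(B), observe first that since $A$ is periodic, every primary component $A_p$ is closed in $A$, and $U_p \defi U \cap A_p$ is open and compact in $A_p$ with $U = \prod_p U_p$. Because the \stqh\ property is inherited by every closed subgroup, each $A_p$ is \stqh\ and in particular \tM, so Theorem~\ref{t:mainA} applies to it. I then partition $\pi(A)$ according to the position of $U_p$ inside $A_p$: put $p \in \delta$ iff $U_p = 0$ (whence $A_p$ is discrete by openness of $U_p$); put $p \in \gamma$ iff $A_p = U_p$ (whence $A_p$ is profinite); among the remaining primes, $p$ goes into $\phi$ when $\rank_p(A_p) \ge 2$ and into $\mu$ when $\rank_p(A_p) = 1$, in which case the classification of rank-one \lca\ $p$-groups forces $A_p$ to be \im.

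The most delicate step is the finiteness of $\phi$. For every $p \in \phi$ one may pick $c_p \in U_p \setminus \{0\}$ and $d_p \in A_p \setminus U_p$. Assuming for contradiction that $\phi$ is infinite, the closed subgroups $X \defi \overline{\langle c_p : p \in \phi\rangle}$ and $Y \defi \overline{\langle c_p + d_p : p \in \phi\rangle}$ give rise to a sum $X+Y$ that fails to be closed: approximating a diagonal limit in the compact part of $X+Y$ forces a formal series in the $d_p$'s that does not converge in $A$, contradicting \stqh. This mirrors the obstruction already encountered in the torsion case of Theorem~\ref{t:abelian-tor-stqh}, where the analogous set of primes is likewise forced to be finite. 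With $\phi$ finite in hand, the direct decomposition (v) follows from the standard primary decomposition of a periodic \lca\ group together with the observation that the four summands $A_\delta$, $A_\gamma$, $A_\phi$, $A_\mu$ live over pairwise disjoint prime sets and are separated by $U$.

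For (B)$\Rightarrow$(A), I would verify \stqh\ on each summand separately: $A_\delta$ is discrete and hence trivially \stqh; $A_\gamma$ is profinite with each \psyl p factor \tM, hence \stqh\ by Theorem~\ref{t:mainA}; $A_\phi$ is a finite direct sum of \stqh\ \lca\ $p$-groups; and $A_\mu$ is \im, which reduces every question about pairs of closed subgroups to questions inside monothetic, and hence totally ordered, closed subgroup lattices, in which sums are trivially closed. Because the four summands decompose $A$ both topologically and algebraically with disjoint prime supports, any two closed subgroups $X, Y \le A$ split compatibly over the four factors, and $X+Y$ is the topological direct sum of the four component sums, each closed in its own factor. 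Hence $X+Y$ is closed in $A$, completing the equivalence.
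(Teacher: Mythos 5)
Your overall architecture (reduce to primary components, classify each prime by the position of $U_p$ in $A_p$, then reassemble via coprimality) matches the paper, and your (B)$\Rightarrow$(A) direction is essentially the paper's argument: discreteness for $A_\delta$, compactness for $A_\gamma$, a finite application of Lemma \ref{l:coprime} for $A_\phi$, and the chain structure of the closed subgroup lattices of rank-one $p$-groups for $A_\mu$ (Proposition \ref{p:img-stqh}). The genuine gap is in your proof that $\phi$ is finite. Your recipe --- pick any $c_p\in U_p\setminus\{0\}$ and $d_p\in A_p\setminus U_p$, set $X\defi\gen{c_p:p\in\phi}$, $Y\defi\gen{c_p+d_p:p\in\phi}$, and claim $X+Y$ is not closed --- cannot work as stated, because it nowhere uses the hypothesis $\rank_p(A_p)\ge2$. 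The same recipe applies verbatim to the primes in $\mu$ (there too $0\neq U_p\lneq A_p$), and yet $A_\mu$ \emph{is} \stqh\ even when $\mu$ is infinite; so the recipe cannot force non-closedness in general. Concretely, in $A_p=\Z_p\oplus\Z_p$ with $U_p=p\Z_p\oplus p\Z_p$ the choices $c_p=(p,0)$ and $d_p=(1,0)$ give $\gen{c_p}\subseteq\gen{c_p+d_p}$, hence $X\subseteq Y$ and $X+Y=Y$ is closed, even though $\phi$ is infinite and the ambient group is in fact not \stqh. The "formal series in the $d_p$'s that does not converge" is not an argument: $X+Y$ always contains such formal sums only through its closure, and whether the closure escapes $X+Y$ depends entirely on how $\gp{c_p}$ and $\gp{d_p}$ sit relative to one another.

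What is missing is precisely the independence condition that the paper extracts from $\rank_p(A_p)\ge2$. The paper first factors the Frattini subgroup $\Phi(U_\phi)=\prod_{p\in\phi}pU_p$ so that each $U_p$ has exponent $p$, then invokes Lemma \ref{l:ApMp} (which \emph{requires} $A_p$ not \im, i.e.\ $\rank_p(A_p)\ge2$) to produce $a_p\in A_p\setminus U_p$ of order at most $p^2$ and $0\neq b_p\in U_p$ with $\gp{a_p}\cap\gp{b_p}=\{0\}$; it then passes to the quotient of $L\defi\gen{a_p,b_p:p\in\phi}$ by $N\defi\gen{pa_p:p\in\phi}$, which is topologically isomorphic to $\bigoplus_{p\in\phi}\Z(p)\oplus\prod_{p\in\phi}\Z(p)$, a group already shown not to be \stqh\ when $\phi$ is infinite (Lemma \ref{l:cyclic-stqh}). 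Your proof needs to supply these three steps (or an equivalent independence argument) rather than asserting non-closedness for arbitrary choices of $c_p$ and $d_p$.
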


This result, we feel, is our genuine contribution.
Namely, for periodic \tM\ groups
Theorem 2 in \cite{muk2} and its proof seem not to lead 
to a proof of our description of periodic nondiscrete \stqhg s.
We also correct \cite[Theorem 14.22(B)]{HHR18}, where $\gamma$
and $A_\gamma$ are missing in the decomposition. 

The concluding Section \ref{s:consequences} contains several
consequences.

\section{Preliminaries}
\label{s:prelim-ab}  
In a number of places we shall need a fact about
compact abelian torsion groups, \cite[Corollory 8.9]{hofmor}, 
which we rephrase here:

\begin{proposition}\label{p:cpt-tg}
The following statements about a compact abelian group $G$ are equivalent:
\begin{enumerate}[\rm(a)]
\item $G$ is a torsion group.
\item $G$ is profinite and has finite exponent.
\item $G=\prod_{p\in S}G_p$ is the cartesian product of compact
 $p$-groups of finite exponent for $p$ in a finite set $S$.
\end{enumerate}
\end{proposition}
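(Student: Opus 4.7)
The plan is to close the loop (a) $\Rightarrow$ (b) $\Rightarrow$ (c) $\Rightarrow$ (a). The implication (c) $\Rightarrow$ (a) is essentially by inspection: a finite product of compact $p$-groups of finite exponent is profinite, and the resulting group has exponent the least common multiple of the individual exponents, which bounds the order of every element. The return (b) $\Rightarrow$ (a) is likewise immediate: a group $G$ with $nG=\{0\}$ for some $n$ is automatically a torsion group.

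For the core implication (a) $\Rightarrow$ (b) the plan is first to show that $G$ has finite exponent and then to derive profiniteness. To obtain finite exponent, I would write $G=\bigcup_{n\in\N}G[n]$, where $G[n]\defi\{x\in G:nx=0\}$ is a closed subgroup of $G$, and the union exhausts $G$ by hypothesis. Baire category applied to the compact (hence Baire) space $G$ provides some $n$ for which $G[n]$ has nonempty interior; being a subgroup, it is then open and of finite index $m$ in $G$. Consequently $mn\.G=\{0\}$, so $G$ has finite exponent $N\mid mn$. Once $NG=\{0\}$, the identity component $G_0$ also satisfies $NG_0=\{0\}$; but every nontrivial connected compact abelian group admits a continuous surjection onto the circle $\T$ (via Pontryagin duality, since its character group is a nontrivial discrete torsion-free abelian group and hence has a quotient isomorphic to $\Z$), and $\T$ contains elements of every finite order, contradicting boundedness. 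Thus $G_0=\{0\}$, $G$ is totally disconnected, and a compact totally disconnected group is profinite.

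For (b) $\Rightarrow$ (c), I would pass to the Pontryagin dual. Profiniteness of $G$ corresponds to $\hat G$ being a discrete torsion abelian group, and the exponent bound $NG=\{0\}$ dualizes to $N\hat G=\{0\}$. By Pr\"ufer's classical theorem, every bounded discrete abelian group is a direct sum of cyclic groups; collecting summands according to the prime dividing their order yields $\hat G=\bigoplus_{p\in S}\hat G_p$, where $S$ is the finite set of prime divisors of $N$ and each $\hat G_p$ is a discrete abelian $p$-group of exponent $p^{n_p}$, the $p$-part of $N$. Dualizing back sends direct sums of discrete groups to direct products of compact groups, giving $G\cong\prod_{p\in S}G_p$ with each $G_p=\widehat{\hat G_p}$ a compact $p$-group of exponent $p^{n_p}$.

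The only step with any real content is the Baire category argument producing finite exponent; everything else is a routine application of Pontryagin duality and the Pr\"ufer decomposition of bounded abelian groups, so I do not anticipate a substantive obstacle.
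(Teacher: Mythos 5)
Your overall strategy is sound and is essentially the standard one; note that the paper offers no proof of this proposition at all, merely rephrasing \cite[Corollary 8.9]{hofmor}, so there is no internal argument to compare against. The Baire category step producing finite exponent, the Pr\"ufer/primary decomposition of the bounded discrete dual, and the closing implications are all correct.

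There is, however, one genuinely false assertion in your argument that $G_0=\{0\}$: you claim that a nontrivial discrete torsion-free abelian group ``has a quotient isomorphic to $\Z$''. This fails already for $\Q$ (every proper quotient of $\Q$ is torsion, and $\Q$ itself, being divisible, cannot be isomorphic to $\Z$), i.e., precisely in the case where $G_0$ would be a solenoid. The underlying slip is that the duality direction is reversed: a continuous surjection $G_0\to\T$ corresponds under the annihilator mechanism to an embedding of $\hat\T\cong\Z$ as a \emph{subgroup} of $\hat{G_0}$, not to a quotient. The repair is immediate: every nontrivial torsion-free group contains a copy of $\Z$, and $G_0/\Z^\perp\cong\hat\Z\cong\T$ gives the desired surjection. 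Alternatively, and more economically, one can avoid $\T$ altogether: a nontrivial compact connected abelian group is divisible, and a divisible group annihilated by $N$ is trivial (write $x=Ny$), so $NG_0=\{0\}$ forces $G_0=\{0\}$ directly. With either fix the proof goes through. (For (b)$\Rightarrow$(c) you could also bypass Pr\"ufer's theorem entirely: a profinite abelian group is the product of its $p$-Sylow subgroups, and finite exponent kills all but finitely many of them; but your duality route is equally valid.)
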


We have the following observation:

\begin{lemma}\label{l:stqh<tM}
Every \stqhg\ is \tM.
\end{lemma}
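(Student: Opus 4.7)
The plan is to reduce the topological modular law to Dedekind's classical (algebraic) modular law for abelian groups, using the stqh hypothesis to identify the lattice join with the honest sum.

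First I would unwind the definitions: for closed subgroups $A,B$ of a \lca\ abelian group $G$, the join in $L(G)$ is $A\vee B=\overline{A+B}$, and the meet is $A\wedge B=A\cap B$. If $G$ is \stqh\ then $A+B$ is already closed, so $A\vee B=A+B$. Thus, for any closed $A,B,C$ with $A\le C$, the left-hand side of the topological modular law reads $A+(B\cap C)$ and the right-hand side reads $(A+B)\cap C$.

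Next I would invoke Dedekind's modular law, which holds in the (untopologized) subgroup lattice of any abelian group: whenever $A\le C$ one has $A+(B\cap C)=(A+B)\cap C$. This is the content of Dedekind's 1877 result recalled in the introduction. Applying it to the three closed subgroups $A,B,C$ gives exactly the identity $A\vee(B\wedge C)=(A\vee B)\wedge C$ required for \tM.

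There is essentially no obstacle here: the only point to check carefully is that being \stqh\ lets us replace every $\overline{A+B}$ appearing in the lattice operations by $A+B$, so that the purely algebraic modular law can be applied without any closure step ruining the equality. The proof is therefore a two-line observation that \stqh\ $\Longrightarrow$ join $=$ sum, combined with Dedekind's law.
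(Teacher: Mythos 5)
Your proposal is correct and matches the paper's argument: the paper likewise uses that \stqh\ makes $A\vee B=A+B$ and $A\vee(B\wedge C)=A+(B\cap C)$, and then verifies the two containments $(A+B)\cap C\subseteq A+(B\cap C)$ (for $A\subseteq C$) and its trivial converse, which is exactly Dedekind's modular law written out. No gap.
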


\begin{proof}
The equality $(A\vee B)\wedge C=A\vee(B\wedge C)$ for
closed subgroups $A\subseteq C$ and $B$ follows from the containments
\[
\begin{array}{cccccccr}
(A\vee B)\wedge C&=&(A+B)\cap C & 
  \subseteq & A+(B\cap C)&=&A\vee(B\wedge C),\\
A\vee (B\wedge C)&=&A+(B\cap C) & 
  \subseteq & (A+B)\cap C&=&(A\vee B)\wedge C. \\
\end{array}\]
\end{proof}

A result by \v Carin (see \cite[Theorem 5]{Charin66} and a mild
generalisation in \cite{HHR-comfort}) 
will be used frequently.

\begin{proposition} \label{p:finrank} For a \lca\ $p$-group $G$
the following conditions are equivalent:
\begin{enumerate}[\rm(1)]
\item $G$ has finite $p$-rank. 
\item There is a compact open subgroup $U$ such that
      both $\rank_p(U)$ and $\rank_p(G/U)$ are finite.
\item There is a compact open subgroup $U$ such that
      $U\cong \Z_p^m\oplus F$ for a nonnegative integer 
      $m\in\N_0$ and a finite abelian group $F$ and that 
      the $p$-socle 
      $\socle_p(G/U)$ is isomorphic to $\Z(p)^n$
      for some $n\in\N_0$. 
\item 
      There is a natural number $r$ such that every \fg\
      subgroup of $G$ can be generated by at most $r$ elements.
\item There are nonnegative integers $m$, $n$, $k$, and a finite
      $p$-group $F$ such that
      algebraically and topologically
      \[ G\cong \Q_p^m\oplus \prf p^n\oplus \Z_p^k\oplus F\]
\end{enumerate}
\end{proposition}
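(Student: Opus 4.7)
The plan is to establish the cycle $(5) \Rightarrow (3) \Rightarrow (2) \Rightarrow (4) \Leftrightarrow (1) \Rightarrow (5)$. The equivalence $(1) \Leftrightarrow (4)$ is essentially the definition of finite $p$-rank recalled in Section~\ref{s:intro-ab}: the uniform bound $r$ in (4) exists exactly when the supremum of generator numbers over topologically \fg\ closed subgroups is finite, in which case the supremum is attained and equals the $p$-rank.

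For $(5) \Rightarrow (3)$ I take $U := \Z_p^m \oplus \{0\} \oplus \Z_p^k \oplus F$ inside $G \cong \Q_p^m \oplus \prf p^n \oplus \Z_p^k \oplus F$, which is compact open and isomorphic to $\Z_p^{m+k} \oplus F$, while $G/U \cong \prf p^{m+n}$ has $p$-socle $\Z(p)^{m+n}$. The step $(3) \Rightarrow (2)$ is then immediate: a discrete torsion abelian $p$-group $H$ with finite socle $\Z(p)^n$ has $p$-rank bounded by $n$, because any \fg\ subgroup $K \le H$ is a finite abelian $p$-group and $d(K) = \dim_{\mathbb{F}_p} K[p] \le \dim_{\mathbb{F}_p} \socle_p(H) = n$. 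For $(2) \Rightarrow (4)$, a topologically \fg\ closed subgroup $H \le G$ fits into the short exact sequence $0 \to H \cap U \to H \to H/(H \cap U) \to 0$, which yields $d(H) \le d(H \cap U) + d(H/(H \cap U))$; the two summands are bounded by $\rank_p(U)$ via Proposition~4.3.6 of \cite{ribes-zalesskii} and by $\rank_p(G/U)$ respectively, since $H/(H \cap U)$ embeds into $G/U$.

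The structural implication $(1) \Rightarrow (5)$ is the main step and follows \cite[Theorem~5]{Charin66}. By van Dantzig, $G$ contains a compact open subgroup $U$. Being a closed subgroup of a group of finite $p$-rank, $U$ is itself of finite $p$-rank, which for a compact pro-$p$ group is equivalent to being topologically \fg; the structure theorem for \fg\ $\Z_p$-modules then yields $U \cong \Z_p^k \oplus F$ with $F$ finite. The discrete quotient $G/U$ is torsion of finite $p$-rank, so its $p$-socle has dimension $s < \infty$, whence its maximal divisible part is isomorphic to $\prf p^s$. The reduced complement $R$ is necessarily of bounded exponent, because $\bigcap_j p^j R = 0$ and intersecting with the finite set $R[p]$ gives $p^N R[p] = 0$ and hence $p^N R = 0$; by Pr\"ufer's first theorem $R$ is then a finite direct sum of cyclic $p$-groups.

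The final and most delicate step is to lift this decomposition back to $G$ and realise it topologically. One shows that $\div(G)$ is closed in $G$, isomorphic to $\Q_p^m \oplus \prf p^n$ for suitable $m$ and $n$, and that $G$ splits topologically as $\div(G) \oplus (G/\div(G))$ with compact complement of the form $\Z_p^{k'} \oplus F'$. Intuitively, each divisible direction of $G/U$ either refines to a $\Q_p$-one-parameter subgroup of $G$ by combining a coherent system of $p$-power lifts with a $\Z_p$-factor of $U$, or lifts to a discrete $\prf p$-summand already transverse to $U$. The main obstacle is precisely this passage: the closedness of $\div(G)$ and the topological splitting require combining the algebraic injectivity of divisible abelian groups with a genuinely topological refinement argument. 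In the generality needed, this is carried out in \cite{HHR-comfort} and reproduced in \cite[Section~3.10]{HHR18}, and this is where the bulk of the technical work concentrates.
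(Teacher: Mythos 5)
The paper offers no proof of this proposition at all: it is imported wholesale from \v Carin \cite[Theorem 5]{Charin66} together with its mild generalisation in \cite{HHR-comfort}, so any comparison is really with those sources. Your reduction is sensibly organised, and the elementary implications check out: $(1)\Leftrightarrow(4)$ is indeed the paper's definition of \prank; $(5)\Rightarrow(3)\Rightarrow(2)$ is a correct computation (using that $G/U$ is a discrete torsion $p$-group whose \fg\ subgroups are finite, with $d(K)=\dim K[p]$); and $(2)\Rightarrow(4)$ works because \fg\ subgroups of a \lca\ $p$-group are compact (Lemma \ref{l:Rb}), so the profinite bound $d(H)\le d(H\cap U)+d(H/(H\cap U))$ together with \cite[Proposition 4.3.6]{ribes-zalesskii} applies.

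Two points in $(1)\Rightarrow(5)$ deserve flagging. First, your argument that the reduced part $R$ of $G/U$ is bounded rests on the assertion $\bigcap_j p^jR=0$, which is \emph{false} for general reduced abelian $p$-groups (the first Ulm subgroup need not vanish); what saves you is the finiteness of the socle, via the standard fact that a $p$-group with finite socle satisfies the minimum condition and is therefore a direct sum of finitely many cocyclic groups, so a reduced such group is finite. You should argue through that route rather than through $\bigcap_j p^jR=0$. Second, and more substantially, the genuinely topological content of the proposition --- that $\div(G)$ is closed, that the extension of $G/U$ by $U$ splits algebraically \emph{and} topologically, and that the quasicyclic directions of $G/U$ assemble into $\Q_p$- or $\prf p$-summands of $G$ --- is exactly the part you defer to \cite{HHR-comfort} and \cite[Section 3.10]{HHR18}. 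Since the paper itself cites the whole statement, this deferral is defensible, but one should be clear that your text is a proof only modulo the same external input; as it stands, the hardest step is described rather than proved.
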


\begin{lemma}[{\cite[Lemma 3.6]{HHR-comfort}}]\label{l:Rb}
For a \lca\ $p$-group $G$ the following statements are equivalent:
\begin{enumerate}[\rm(a)]
\item $G$ is \fg.
\item $G$ is compact and has finite \prank.
\item There are $m\ge0$ and a finite abelian $p$-group $F$
such that $G$ is algebraically and topologically
 isomorphic to $\Z_p^m\oplus F$.
\end{enumerate}
\end{lemma}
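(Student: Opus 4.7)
The plan is to prove the cyclic chain (c) $\Rightarrow$ (a) $\Rightarrow$ (b) $\Rightarrow$ (c), with the last implication amounting to reading off what Proposition \ref{p:finrank} already provides.

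The implication (c) $\Rightarrow$ (a) is direct: if $G \cong \Z_p^m \oplus F$ with $F$ finite, the $m$ topological generators of the $\Z_p$-summands (each $\Z_p = \overline{\gen{1}}$) together with any finite generating set of $F$ topologically generate $G$.

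For (a) $\Rightarrow$ (b) the main point is to upgrade ``finitely generated'' to ``compact''. Let $x_1,\ldots,x_n$ topologically generate $G$. Each $x_i$ is a $p$-element, so $\gen{x_i}$ is a procyclic pro-$p$ group and hence compact. The set $\gen{x_1}+\cdots+\gen{x_n}$ is the continuous image of the compact product $\gen{x_1}\times\cdots\times\gen{x_n}$ under the addition map, hence compact and in particular closed; since it contains $x_1,\ldots,x_n$ it must coincide with $G$. Therefore $G$ is compact, and consequently pro-$p$. The bound $d(H)\le d(G)\le n$ for every closed subgroup $H\le G$, recorded from \cite[Proposition 4.3.6]{ribes-zalesskii} in the preliminaries, then forces $G$ to have finite $p$-rank.

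For (b) $\Rightarrow$ (c) I apply Proposition \ref{p:finrank}(1)$\Rightarrow$(5) to write $G \cong \Q_p^m \oplus \prf p^n \oplus \Z_p^k \oplus F$. Compactness of $G$ excludes the non-compact summands $\Q_p$ and $\prf p$ (the latter being discrete and infinite), forcing $m = n = 0$ and yielding the stated form $G \cong \Z_p^k \oplus F$. The only step requiring any genuine content is the compactness verification in (a) $\Rightarrow$ (b); the remainder is a straightforward bookkeeping on top of \v Carin's structure theorem.
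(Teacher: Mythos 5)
Your proof is correct. Note that the paper itself offers no proof of Lemma~\ref{l:Rb} --- it is imported verbatim from \cite[Lemma 3.6]{HHR-comfort} --- so there is no internal argument to compare against; your derivation is consistent with the paper's logical order, since Proposition~\ref{p:finrank} and the subgroup bound $d(H)\le d(A)$ are both stated before the lemma. The one step with real content, upgrading ``topologically \fg'' to ``compact'' by writing $G=\gen{x_1}+\cdots+\gen{x_n}$ as a continuous image of a product of compact procyclic pro-$p$ groups, is handled correctly, and the elimination of the $\Q_p^m$ and $\prf p^n$ summands in (b)$\Rightarrow$(c) by compactness (the latter being infinite discrete) is sound.
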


\begin{lemma}\label{l:GU-prank}
Let $G$ be a \lca\ \tf\ $p$-group containing a compact 
open subgroup $U$
of finite \prank. Then \[\rank_p(G)=\rank_p(U).\]
\end{lemma}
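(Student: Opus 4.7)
The plan is to pin down the structure of $U$, leverage it to control the structure of $G$, and then read off both $p$-ranks from the resulting normal form.

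First, since $U$ is a compact $p$-group of finite $p$-rank contained in the \tf\ group $G$, it is itself \tf, so Lemma \ref{l:Rb} forces $U \cong \Z_p^m$ with $m := \rank_p(U)$. The key technical step is to show that $G$ itself has finite $p$-rank, for which by Proposition \ref{p:finrank}(3) it suffices to prove that $\socle_p(G/U)$ is finite. Setting $p^{-1}U := \{x \in G : px \in U\}$, one identifies $\socle_p(G/U)$ with $p^{-1}U/U$. Because $G$ is \tf, multiplication by $p$ is an injective continuous endomorphism of $G$, so it restricts to a topological isomorphism of $p^{-1}U$ onto a closed subgroup $p(p^{-1}U) \subseteq U$ that obviously contains $pU$. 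Since $U/pU \cong \Z(p)^m$ is finite, so is $p(p^{-1}U)/pU \cong p^{-1}U/U$, as needed.

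With $G$ now of finite $p$-rank, Proposition \ref{p:finrank}(5) together with the \tf\ hypothesis yields $G \cong \Q_p^a \oplus \Z_p^c$ for some $a, c \ge 0$; it remains to check $m = a+c = \rank_p(G)$. For any compact open subgroup $U' \subseteq G$, the projection $\pr \colon G \to \Q_p^a$ sends $U'$ into a compact, hence bounded subgroup of $\Q_p^a$, so $U' \subseteq V$ for some $V \cong \Z_p^{a+c}$ inside $G$. Then \cite[Proposition 4.3.6]{ribes-zalesskii} gives $\rank_p(U') \le a+c$, while the finiteness of $V/U'$ yields $p^j V \subseteq U'$ for some $j$, whence $p^jV \cong \Z_p^{a+c}$ embeds into $U' \cong \Z_p^{\rank_p(U')}$, forcing the reverse inequality. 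The same projection argument, applied to an arbitrary \fg\ closed subgroup $H \subseteq G$ (which is compact by Lemma \ref{l:Rb}), places $H$ inside some $\Z_p^{a+c}$, so $H$ needs at most $a+c$ topological generators; since the standard compact open $\Z_p^a \oplus \Z_p^c$ realises this bound, $\rank_p(G) = a+c = m$.

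The main obstacle, as expected, is the finiteness of $\socle_p(G/U)$ in the second step: this is exactly the place where the \tf\ hypothesis is essential, and the trick is to use the injectivity of multiplication by $p$ on $G$ to push a question about the quotient $G/U$ down to a computation inside the already-understood compact group $U$.
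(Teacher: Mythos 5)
Your proof is correct, and it rests on the same key observation as the paper's --- on a \tf\ group multiplication by a power of $p$ is injective, and it carries any compact subgroup containing $U$ with finite index into $U$ --- but you deploy that observation along a genuinely different route. The paper applies it directly to an arbitrary \fg\ closed subgroup $T$: after replacing $T$ by $T+U$ one has $|T/U|=p^k$, hence $T\cong p^kT\le U$ and $\rank_p(T)\le\rank_p(U)$ in one stroke, with no appeal to the classification of finite-rank $p$-groups. You instead use the trick only to bound $\socle_p(G/U)\cong p^{-1}U/U\hookrightarrow U/pU$, feed this into Proposition \ref{p:finrank}(3) to conclude that $G$ has finite \prank, and then read both ranks off the normal form $G\cong\Q_p^a\oplus\Z_p^c$ supplied by Proposition \ref{p:finrank}(5). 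Your route is longer and leans on \v Carin's structure theorem, but it buys the explicit normal form of $G$ and shows that \emph{every} compact open subgroup, not just $U$, has \prank\ $a+c$; the paper's argument is more economical and self-contained. One cosmetic point: you call multiplication by $p$ a topological isomorphism of $p^{-1}U$ onto a closed subgroup of $U$ before you know that $p^{-1}U$ is compact; for the finiteness of the socle the purely algebraic injection $p^{-1}U/U\hookrightarrow U/pU$ (first isomorphism theorem plus injectivity of $x\mapsto px$ on $G$) already suffices, so nothing is lost.
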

\begin{proof} 
By the definition of the \prank\ we need to prove that
every \fg\ subgroup $T$ of $G$ satisfies $d(T)=\rank_p(T)\le r\defi \rank_p(U)$.
Lemma \ref{l:Rb} implies that $T$ is compact and so is $T+U$.
Because of $\rank_p(T)\le\rank_p(T+U)$ 
it will suffice to prove $\rank_p(T+U)\le r$,
i.e., we may assume $U\le T$. Since $T$ is compact and $U$ is an open
subgroup there is $k\ge0$ such that $|T/U|=p^k$.
The homomorphism $\phi:T\to U$ sending $t\in T$ to $p^kt$
is continuous and injective and therefore the compact subgroup
$\phi(T)=p^kT\cong T$ algebraically and topologically.
Deduce from this that 
\[\rank_p(T)=\rank_p(p^kT)\le r=\rank_p(U)\le \rank_p(T),\]
showing $\rank_p(T)=r$, as desired.
\end{proof}

We record a well known fact, see e.g. \cite[2.13 Corollary]{armacost81}:

\begin{lemma}\label{l:Rc}
A \lca\ group $G$ is a $p$-group if, and only if, its dual $\hat G$ is.
\end{lemma}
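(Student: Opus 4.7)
The plan is to exploit Pontryagin reflexivity $\widehat{\widehat G}\cong G$ to reduce to a single implication, and then to leverage the classical duality between compact pro-$p$ groups and discrete abelian $p$-groups applied to a compact open subgroup. Since the statement is symmetric under duality, it will suffice to show that if $G$ is a $p$-group, then so is $\hat G$.

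Assume $G$ is a $p$-group in the sense of the paper; then $G$ is periodic, hence totally disconnected and a union of compact subgroups, and admits a compact open subgroup $U$. Since every element of $G$ satisfies $p^kx\to 0$, the compact group $U$ is a pro-$p$ group. For arbitrary $g\in G$ one has $p^kg\in U$ for $k$ large enough, so the coset $g+U$ has order a power of $p$ in $G/U$, which is therefore a discrete abelian $p$-group.

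Next I would dualise. Because $U$ is compact and open in $G$, its annihilator $U^\perp\subseteq\hat G$ is isomorphic to $\widehat{G/U}$, the dual of a discrete abelian $p$-group, and is thus a compact pro-$p$ group; simultaneously $\hat G/U^\perp\cong\hat U$, the dual of a compact pro-$p$ group, is a discrete abelian $p$-group. In particular $U^\perp$ is a compact open subgroup of $\hat G$, so $\hat G$ is totally disconnected.

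To conclude, I would verify that $\hat G$ is periodic with $\pi(\hat G)=\{p\}$. Given $\chi\in\hat G$, its image in the discrete $p$-group $\hat G/U^\perp$ has $p$-power order, so $p^k\chi\in U^\perp$ for some $k$; since $U^\perp$ is pro-$p$, $p^{k+n}\chi=p^n(p^k\chi)\to 0$ as $n\to\infty$, making $\chi$ a $p$-element. Moreover $\langle\chi\rangle+U^\perp$ is a finite union of cosets of $U^\perp$, hence compact and containing $\chi$, so $\hat G$ is also a union of compact subgroups. The main bookkeeping obstacle, and really the only subtle point, is aligning the paper's particular definitions of ``$p$-group'' (periodic with $\pi(A)=\{p\}$) and ``$p$-element'' ($p^kx\to 0$) with the standard pro-$p$/discrete $p$-group duality on both sides.
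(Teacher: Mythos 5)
Your proof is correct. Note that the paper itself offers no argument for this lemma: it records it as a well-known fact and cites Armacost's book (2.13 Corollary there), so there is no ``paper proof'' to match against. Your self-contained argument is the standard one and all the steps check out: the reduction to one implication via reflexivity $\widehat{\widehat G}\cong G$ is legitimate; a compact open subgroup $U$ of a $p$-group is profinite with all Sylow $q$-components trivial for $q\neq p$ (an element that is simultaneously a $p$- and a $q$-element generates a group that is both pro-$p$ and pro-$q$, hence trivial), so $U$ is pro-$p$; $G/U$ is discrete $p$-torsion since $p^kg\to 0$ forces $p^kg\in U$ eventually; the annihilator mechanism gives $U^\perp\cong\widehat{G/U}$ compact pro-$p$ and open, with $\hat G/U^\perp\cong\hat U$ discrete $p$-torsion; and the final bookkeeping (every character is a $p$-element, and $\gp{\chi}+U^\perp$ is a compact subgroup containing $\chi$, so $\hat G$ is periodic with $\pi(\hat G)=\{p\}$) correctly aligns the conclusion with the paper's definition of ``$p$-group'' as a periodic group whose only nontrivial primary component is the $p$-one. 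The only cosmetic caveat is the trivial group, for which $\pi(G)=\emptyset$, but that degenerate case is irrelevant to how the lemma is used.
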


\begin{remark}\label{rem:tMg}\rm
As has been said above, a lattice is modular if, and only if,
it is $E_5$-free (see \cite[2.1.2 Theorem]{schmidt}). 
The absence of $E_5$ in the closed subgroup 
lattice is inherited by closed subgroups and quotient groups.
Hence closed subgroups and quotient groups of a \tMg\ are \tMg s. 
Moreover, a \lca\ group $G$ is \tM\ if, and only if, its Pontryagin dual
$\hat G$ is \tM\ (the latter fact follows from applying the
Annihilator Mechanism, see \cite[p.~314]{hofmor}). 
\end{remark}     

However, the class of \tMg s fails to be closed under the formation
of strict projective
limits and (local) products  as the following example, due to 
Mukhin  shows (see \cite{muk2} which will be reproduced in
Example \ref{ex:P+S} below). 

\begin{remark}\label{rem:not-M}\rm
Every group $G$ that is not \tM\ must contain closed subgroups $A$, $B$,
and $C$, where $A\subseteq C$,
the meet $B\wedge C$ is a proper subgroup of $A$, and, $C$ is a proper subgroup
of the join $A\vee B$. Then the five closed subgroups
\[A\vee B,\ C,\ A,\ B,\ B\wedge C \tag{$\dagger$}\label{eq:ABC}\]
form a subgroup sublattice of the lattice of closed subgroups of $G$
or equivalently, the five groups in Eq.~(\ref{eq:ABC}) are all pairwise
different and $B\cap C\subseteq A$. 

Indeed, if the closed subgroups $X\subset Z$ and $Y$ do not satisfy the
modular identity then $A\defi X\vee(Y\wedge Z)$, 
$B\defi Y$, and, $C\defi(X\vee Y)\wedge Z$
serve the purpose.
\end{remark}

This observation provides a simple method
for exhibiting important examples of \lca\ groups not \tM.

\begin{example}\label{ex:reals}\rm
Let $G\defi\R$ be the reals and 
fix subgroups $C\defi\Z$, $A\defi 2\Z$, and, $B\defi\sqrt2\Z=
\{z\sqrt2:z\in\Z\}$. 
Then $A\vee B=\R$ by the density of $2\Z+\sqrt2\Z=2(\Z+\frac{\sqrt2}2\Z)$.
Moreover, $B\wedge C=\{0\}$ is contained in $A$.
\end{example}

\begin{example}\label{ex:pK=K}\rm
Let $p$ be a prime and
 $G=Z\oplus K$ be the topological direct sum of a discrete group $Z\cong\Z$
and an infinite compact monothetic group $K$. Suppose that $K=pK$, i.e.,
$K$ is $p$-divisible.

Fix a topological generator $k$ of $K$ and a generator $t$ of $Z$.
Let $C\defi Z$, $A\defi pZ$, and, 
$B\defi\{zt+zk: z\in \Z\}$ and observe that it is
the graph of the homomorphism $f:Z\to K$ sending the
generator $t$ of $Z$ to the generator $k$ of $K$. Hence $B$
is discrete. For proving $G=A\vee B$
observe first that $pK=K$ implies that $\gen{pk}=K$.
Then $A+B$ contains all elements of the form $put+v(t+k)=put+vt+vk$ for
$u$ and $v$ in $\Z$. Select $u\defi 1$ and 
$v\defi-p$ in order to see that $pk\in A+B$.
Therefore $A\vee B=\overline{A+B}$ contains $\gen{pk}=K$
and hence
\begin{equation}\label{eq:AvBC}
(A\vee B)\wedge C=G\cap C=C=Z.
\end{equation}
Suppose there exists an integer $z\in\Z$ such that 
$z(t+k)=zt+zk\in B\cap C$. This implies $zk=0$.
However, $K=\gen k$ is an infinite monothetic group and thus $k$ cannot be a
torsion element. Thus $z=0$ and therefore $B\cap C=\{0\}$, so that
taking Eq.~(\ref{eq:AvBC}) into account, 
\[A\vee (B\cap C)=A\neq C=(A\vee B)\wedge C\]
follows. Thus $G$ is not \tqh.
\end{example}

\begin{example}\label{ex:P+S}\rm
Let $S\defi\Z(p)^{(\N)}$ and $P\defi\Z(p)^{\N}$ 
and form $G\defi S\oplus P$, the 
topological direct sum. Let $\iota:S\to P$ be the canonical dense embedding
of $S$ in $P$ and $K\cong\Z(p)$ a finite subgroup
of $P$ intersecting $\iota(S)$ trivially. Such $K$ can be provided
by the subgroup of all constant maps $\N\to \Z(p)$. 
Define closed subgroups $C\defi S\oplus K$, $A\defi S$, and, 
$B\defi\{(s+\iota(s)):s\in S\}$. Then $B$ is algebraically and
 topologically isomorphic to
the graph of the function $\iota$ and hence a discrete subgroup of $G$.
Then, for $x$ to belong to $B\cap C$ it is necessary and sufficient
that there are $s,s'\in S$ an $k\in K$ with  
\[x=s+\iota(s)=s'+k.\] 
Since $K\cap\iota(S)=\{0\}$ we must have $\iota(s)=s=k=0$.
Hence $B\wedge C=\{0\}$.
Since $A+B=S+\iota(S)$ and $\overline{\iota(S)}=P$
one finds $A\vee B=\overline{A+B}=S+P=G$.
\end{example}

For describing the next examples, and also later, for the 
proof of Theorem \ref{t:M-abelian-p}, we need to recall the notion of
{\em local product} of \lc\ groups.

\begin{definition}\label{d:loc-prod} 
\rm Let $(G_j)_{j\in J}$ be a family of 
\lc\ groups and assume that for each $j\in J$ the group
$G_j$ contains a compact  open subgroup $C_j$. Let $P$ be the   
subgroup of the cartesian product of the $G_j$ containing
exactly those $J$-tuples $(g_j)_{j\in J}$ of elements $g_j\in G_j$
for which the set $\{j\in J: g_j\notin C_j\}$ is finite. Then 
$P$ contains the cartesian product $C\defi\prod_{j\in J} C_j$ which is
a compact topological group with respect to the Tychonoff topology.
The group $P$ has a unique group topology with respect to which $C$
is an open subgroup. Now 
the {\em local product} of the family $((G_j,C_j))_{j\in J}$ is the group $P$
with this topology, and it is denoted by
\[P=\prod_{j\in J}^{\rm loc}(G_j,C_j).\]
\end{definition}

Finally, when $G=\locprod_{i\ge1}(G_i,C_i)$
is a local product and 
$G_i\cong A$ and $C_i\cong B$ algebraically and topologically then
we shall denote $G$ by $(A,B)^{{\rm loc,}\,\N}$. 
\begin{example}\label{ex:p-quadrat}\rm
Let us show that the local product
\[L\defi\ZppZp\]
cannot be \tM.

We are going to show that a closed subgroup of a Hausdorff quotient group
of $L$ is not \tM\ and hence $L$ is not \tM\  by Remark~\ref{rem:tMg}.
Select an infinite subset $I$ of $\N$ with infinite 
complement $J\defi \N\setminus I$. Then there is 
a topological and algebraic isomorphism
\[L\cong L_I\oplus L_J,\]
where $L_I\defi (\Z(p^2),p\Z(p^2))^{{\rm \loc}, I}$ and
$L_J\defi(\Z(p^2),p\Z(p^2))^{{\rm loc}, J}$ are both algebraically
and topologically isomorphic to $L$. The socles $S_I$ and $S_J$ 
of respectively $L_I$ and $L_J$ are compact and open therein and
isomorphic to $\Z(p)^\N$. Since $L_I/S_I\cong \Z(p)^{(\N)}$ 
the subquotient $L_I/S_I\oplus S_J$ of $L$ 
is algebraically and topologically 
isomorphic to $\Z(p)^{(\N)}\oplus \Z(p)^\N$, which is not \tM, 
as has been shown in Example \ref{ex:P+S}.
\end{example}

\begin{lemma}\label{l:pC=C}
Let $C$ be a compact monothetic not torsion group.
Then there is a prime $p$ and a monothetic subgroup $K$ of $C$
with $pK=K$ and $K$ is not torsion.
\end{lemma}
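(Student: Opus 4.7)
The plan is to invoke Pontryagin duality and split into two cases depending on whether the connected component $C_0$ of $C$ is trivial. Since $C$ is compact monothetic, its dual $\hat C$ embeds as a discrete subgroup of $\T = \R/\Z$; and since $C$ is not torsion, Proposition~\ref{p:cpt-tg} tells us that $\hat C$ has no finite exponent.

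Suppose first that $C_0 \ne 0$. I would take $K \defi C_0$. Every connected compact abelian group is divisible, so $pK = K$ for every prime $p$, and $K$ cannot be torsion, since a torsion compact group is profinite by Proposition~\ref{p:cpt-tg} whereas connected profinite groups are trivial. The delicate point is to prove $K$ monothetic, for which I would argue dually: the annihilator $C_0^\perp$ coincides with $\tor(\hat C)$, because characters of the profinite quotient $C/C_0$ are torsion and every torsion character kills the connected component; hence $\hat{C_0} \cong \hat C/\tor(\hat C)$. Passing to the quotient of the inclusion $\hat C \hookrightarrow \T$ then produces an embedding $\hat{C_0} \hookrightarrow \T/(\Q/\Z)$. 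Since $\T/(\Q/\Z)$ is a $\Q$-vector space of dimension $2^{\aleph_0}$, it embeds abstractly into the torsion-free part of $\T$ (itself a $\Q$-vector space of the same dimension), and composing yields an abstract embedding $\hat{C_0} \hookrightarrow \T$, which proves $C_0$ is monothetic. Any prime $p$ then fulfils the conclusion.

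Suppose instead that $C_0 = 0$, so that $C$ is profinite and $\hat C$ is a torsion subgroup of $\T$ contained in $\Q/\Z$. Then $\hat C = \bigoplus_p \hat C_p$ with each $\hat C_p \le \Z(p^\infty)$, so dually $C \cong \prod_p C_p$ with every $C_p$ either $0$, $\Z(p^{n_p})$, or $\Z_p$. Because $\hat C$ has infinite exponent, at least one of the following holds: (i) some $C_p \cong \Z_p$, or (ii) infinitely many $C_p$ are nontrivial. In case (i) I would let $K \defi C_p$: it is monothetic and torsion-free, while any prime $q \ne p$ acts as a unit on $\Z_p$, giving $qK = K$. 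In case (ii) I would pick a prime $q$ with $C_q \ne 0$ and set $K \defi \prod_{p \ne q} C_p$; it is closed in $C$, procyclic (hence monothetic) as a product of procyclic pro-$p$ groups for pairwise distinct primes $p$, of infinite exponent and so not torsion by Proposition~\ref{p:cpt-tg}, and $qK = K$ because $q$ is a unit in each remaining factor.

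The principal obstacle is the monotheticity of $C_0$ in the connected case; once the duality identification $\hat{C_0} = \hat C/\tor(\hat C)$ is in hand, everything reduces to the rank comparison that embeds $\T/(\Q/\Z)$ abstractly into $\T$. The profinite case is then a straightforward unfolding of the product decomposition $C \cong \prod_p C_p$.
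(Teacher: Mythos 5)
Your proof is correct, and its skeleton is the same as the paper's: split on whether the connected component $C_0$ is trivial, take $K\defi C_0$ in the connected case, and strip away one Sylow factor of $C=\prod_pC_p$ in the profinite case. The differences are in the justifications, and they are worth noting. For the monotheticity of $C_0$ the paper simply invokes the Hewitt--Ross theorem (25.17) that a compact connected group whose weight does not exceed $2^{\aleph_0}$ is monothetic, together with $w(C_0)\le w(C)$; your dual argument --- identify $\widehat{C_0}$ with $\hat C/\tor(\hat C)$ via $C_0^\perp=\tor(\hat C)$, embed it into $\T/(\Q/\Z)\cong\Q^{(2^{\aleph_0})}$, and re-embed that $\Q$-vector space into $\T$ --- is in effect a self-contained proof of the special case of that theorem you need, so it trades brevity for independence from the reference. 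In the profinite case the paper's case split is slightly slicker: if some $C_p=\{0\}$ it takes $K\defi C$ itself (so no structure theory of the individual factors is needed), and only when every $C_p$ is nontrivial does it pass to $K\defi\prod_{q\ne p}C_q$, ruling out torsion via Proposition \ref{p:cpt-tg} exactly as you do; your split (some $C_p\cong\Z_p$ versus infinitely many nontrivial factors) works equally well but leans on the explicit classification of the $C_p$ obtained from $\hat C\le\Q/\Z$, which the paper avoids. One cosmetic caution: ``embeds as a discrete subgroup of $\T$'' should be read as ``admits an injective homomorphism into $\T$ regarded as a discrete group,'' since an infinite subgroup of $\T$ is never topologically discrete; your subsequent use of the embedding is purely algebraic, so nothing breaks.
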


\begin{proof}
If the connected component $C_0$ of $C$ is not trivial we may choose
$K\defi C_0$. Since $C_0$ is divisible, for any prime $p$,
$pK=K$. Since the weight of $C_0$ does not exceed the weight of $C$
infer from \cite[(25.17) Theorem]{hewitt-ross1-book} 
that $C_0$ is monothetic.

Next assume $C_0=\{0\}$. Then $C$ is profinite and abelian and
hence pronilpotent. Making 
use of \cite[Proposition 2.3.8]{ribes-zalesskii}, we deduce
that $C=\prod_pC_p$
is the cartesian product of its \psyl p subgroups. 
If there is a prime $p$ with $C_p=\{0\}$ 
then $K\defi C$ serves the purpose.
Now assume that $C_p\neq\{0\}$ holds for all primes $p$. Select any prime
$p$ and note that by Proposition \ref{p:cpt-tg} the
closed subgroup $K\defi \prod_{q\neq p}C_q$ cannot be torsion.
Certainly $pK=K$.
\end{proof}

\begin{lemma}\label{l:not-tM}
Let $G$ be a \lca\ \tMg. Then 
\begin{enumerate}[\rm(a)]
\item The connected component $G_0$ of $G$ is compact
 and $\comp(G)$ is an open subgroup of $G$.
\item 
If $\comp(G)$ is a proper subgroup of $G$ then $\comp(G)=\tor(G)$.
\item If $U$ is any open compact subgroup then $G/U$ has finite 
  $\Z$-rank.
\end{enumerate}
\end{lemma}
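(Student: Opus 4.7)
For (a), my plan is to invoke the structure theorem for connected LCA groups to write $G_0 \cong \R^n \oplus K_0$ with $K_0$ compact connected. If $n \geq 1$, then $G$ contains $\R$ as a closed subgroup; but Example \ref{ex:reals} shows $\R$ is not \tM, contradicting Remark \ref{rem:tMg} (closed subgroups inherit topological modularity). Hence $n = 0$ and $G_0 = K_0$ is compact. For openness of $\comp(G)$, I would note that $G/G_0$ is totally disconnected LCA and therefore contains a compact open subgroup $V/G_0$; its preimage $V$ in $G$ is compact (as an extension of compacts) and open (as $V/G_0$ is open), so $V \subseteq \comp(G)$ forces $\comp(G)$ to be open.

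For (b), the inclusion $\tor(G) \subseteq \comp(G)$ is automatic, so I would argue the reverse by contradiction: assume some $x \in \comp(G) \setminus \tor(G)$ exists. Then $K \defi \overline{\gp{x}}$ is an infinite, compact, monothetic, non-torsion group, so Lemma \ref{l:pC=C} supplies a prime $p$ and a monothetic non-torsion subgroup $K' \leq K$ with $pK' = K'$. Since $\comp(G)$ is proper, I can pick $g \in G \setminus \comp(G)$; the closure $\overline{\gp{g}}$ is non-compact and monothetic, and because every monothetic LCA group is either compact or isomorphic to $\Z$, this produces a discrete $Z \cong \Z$ in $G$. The intersection $Z \cap K'$ is both compact and contained in $\Z$, hence trivial, so the open mapping theorem for $\sigma$-compact LCA groups upgrades $(z,k) \mapsto zg + k$ to a topological isomorphism from $\Z \times K'$ onto the closed subgroup $Z + K'$, exhibiting a closed copy of $\Z \oplus K'$ in $G$. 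But this is exactly the non-modular configuration of Example \ref{ex:pK=K}, contradicting \tM\ via Remark \ref{rem:tMg}, so no such $x$ exists and $\comp(G) = \tor(G)$.

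For (c), the plan is to contradict modularity in the same spirit: if $G/U$ had infinite $\Z$-rank, I would lift an infinite $\Z$-independent family $\bar g_1, \bar g_2, \ldots \in G/U$ to elements $g_i \in G$ and pair these with a monothetic non-torsion $p$-divisible subgroup $K' \leq U$ extracted (via Lemma \ref{l:pC=C}) from a suitable primary component of the compact $U$, then rerun the Example \ref{ex:pK=K} construction inside a closed subgroup of $G$. The hard part will be dealing with the case that $U$ is too small or too torsion-rich to admit such a $K'$; in that regime the non-modularity has to be wrung out of the independence of the $\bar g_i$ themselves in a discrete subquotient, presumably via an Example \ref{ex:P+S}-style configuration after reduction modulo the finite-order part of $U$. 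A careful case analysis keyed to the connected-plus-primary decomposition of the compact $U$ is what I would expect to tie the two regimes together and complete the argument.
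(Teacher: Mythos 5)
Parts (a) and (b) of your proposal are correct and essentially reproduce the paper's own argument: the Vector Splitting Theorem plus Example \ref{ex:reals} for compactness of $G_0$, the preimage of a compact open subgroup of $G/G_0$ for openness of $\comp(G)$, and, for (b), Weil's dichotomy for non-compact monothetic groups to produce $Z\cong\Z$, Lemma \ref{l:pC=C} to produce $K'$ with $pK'=K'$, and Example \ref{ex:pK=K} applied to the closed subgroup $Z+K'$ (which is closed by Lemma \ref{l:XK} and splits topologically since $Z\cap K'=\{0\}$ and $K'$ is open in it).

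Part (c), however, has a genuine gap. Your ``first regime'' is empty: since $G/U$ has infinite, hence positive, $\Z$-rank, $\comp(G)$ is a proper subgroup of $G$, so by the just-proved part (b) the compact subgroup $U\subseteq\comp(G)=\tor(G)$ is a torsion group. It therefore contains no monothetic non-torsion subgroup at all, and Lemma \ref{l:pC=C} is never applicable. Consequently the entire content of (c) lies in the case you defer as ``the hard part,'' for which you only gesture at ``an Example \ref{ex:P+S}-style configuration'' without constructing it; note also that ``reduction modulo the finite-order part of $U$'' collapses to reduction modulo all of $U$, leaving a discrete group, and a discrete abelian group is always \tM\ by Dedekind, so no contradiction can be extracted from a discrete subquotient alone. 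The missing construction, which is the substance of the paper's proof, runs as follows: by Proposition \ref{p:cpt-tg} the torsion group $U=\prod_{p}U_p$ has finite exponent with only finitely many nontrivial primary components, so some $U_p/pU_p$ is infinite; the subgroup generated by $U$ together with lifts of an infinite independent family in $G/U$ is open, hence closed, and splits as $\Z^{(\N)}\oplus U$ because $\Z^{(\N)}$ is free; passing to the quotient $R\cong\Z^{(\N)}\oplus U_p/pU_p$, then to a first-countable quotient of the compact factor to identify it with $\Z(p)^\N$, and finally factoring $p\Z^{(\N)}$ produces $\Z(p)^{(\N)}\oplus\Z(p)^\N$, which is the non-modular group of Example \ref{ex:P+S}. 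Without these reductions your argument for (c) is not complete.
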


\begin{proof}
(a)
Since $G$ is \tM\ so is, by Remark \ref{rem:tMg}, 
the connected component $G_0$. By the Vector Splitting Theorem (see 
\cite[Theorem 7.57]{hofmor}) there is $n\ge0$ and a compact
connected subgroup $K$ such that
\[G_0=\R^n\oplus K.\]
If $G_0$ were not compact then $n>0$ and hence there is a closed
subgroup $R\cong\R$ of $G_0$ which must be \tM,
contradicting the findings in Example \ref{ex:reals}.
Hence $G_0=K$ is compact.     
The factor group $G/G_0$ is totally disconnected and thus
contains a compact open subgroup, say $C$. 
The latter gives rise to an open
compact subgroup of $G$. 
Therefore $\comp(G)$ is open.

\msk

(b) 
Since $\comp(G)<G$, (a) implies that
the factor group $G/\comp(G)$ is discrete
and \tf. Therefore one can find a discrete subgroup $Z\cong\Z$ of $G$.
Suppose $G$ to contain an element $c$ with $C\defi\gen c$ 
compact and not torsion.
Then $C$ is an infinite monothetic subgroup.
Lemma \ref{l:pC=C} provides a prime $p$ and a monothetic infinite
subgroup $K$ of $C$ with $K=pK$. Remark \ref{rem:tMg} shows that
the closed subgroup $Z\oplus K$ must be \tM. This leads to a contradiction
in light of Example \ref{ex:pK=K}.

\msk

(c)
If, for some open compact subgroup $U$ of $G$, the factor group
$G/U$ has infinite $\Z$-rank then $G$ contains a closed subgroup 
$S\cong \Z^{(\N)}\oplus U$. By (b) $U$ is a compact torsion group
and by Proposition \ref{p:cpt-tg} it
is the cartesian product $U=\prod_{p\in S}U_p$ of compact finite
exponent $p$-groups for a finite set $S$ of primes. 
Since $U$ is assumed to be infinite (else $G$ would be discrete)
there is $p\in S$ with $U_p/pU_p$ infinite. Since $G$ is
by assumption \tM, so is $R\defi\Z^{(\N)}\oplus U_p/pU_p$. 
Let $(V_i)_{i\in\N}$ be a properly descending sequence of open subgroups of
$R$ all contained in $U_p/pU_p$ and let 
$V\defi\bigcap_{i\ge1}V_i$ denote the intersection.
Then $(U_p/pU_p)/V$ is first countable and has exponent $p$.
Therefore $(U_p/pU_p)/V\cong \Z(p)^\N$ and hence   
topologically and algebraically 
\[R/V\cong \Z^{(\N)}\oplus \Z(p)^{\N}.\]
Letting $A=\Z^{(\N)}$ denote the first direct summand
we may factor $pA$ and obtain the \tM\ $p$-group 
\[(R/V)/pA\cong \Z(p)^{(\N)}\oplus\Z(p)^{\N}\]
contradicting our findings in Example \ref{ex:P+S}.
\end{proof}

If a periodic group is the topological direct sum
of groups $G$ and $H$ and
$\pi(G)\cap\pi(H)=\emptyset$ it will be enough to ensure that each factor
is \stqh, in order to prove that $G\oplus H$ is \stqh. 

\begin{lemma}\label{l:coprime}
If $G$ and $H$ are both periodic \stqhg s and $\pi(G)\cap\pi(H)=\emptyset$ 
then their topological direct sum $G\oplus H$ is a \stqhg.
\end{lemma}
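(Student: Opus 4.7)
The plan is to reduce the statement to the \stqh\ hypothesis on each factor by proving that every closed subgroup $X$ of $L\defi G\oplus H$ decomposes as a topological direct sum
\[ X = X_G\oplus X_H, \qquad X_G\defi X\cap G,\quad X_H\defi X\cap H. \]
Once this is established, given closed subgroups $X,Y\le L$ one obtains $X+Y=(X_G+Y_G)\oplus(X_H+Y_H)$. By hypothesis $X_G+Y_G$ is closed in $G$ and $X_H+Y_H$ is closed in $H$, and a direct sum of two closed subgroups, one lying in each factor of the topological direct sum $G\oplus H$, is automatically closed in $L$. Hence $X+Y$ is closed, as required.

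For the decomposition, fix $x\in X$ and write $x=g+h$ with $g\in G$ and $h\in H$. Because $L$ is periodic, $M\defi\gen x$ is compact and monothetic, hence profinite, and its pronilpotent decomposition $M=\prod_p M_p$ into \psyl p subgroups is available (as in the proof of Lemma~\ref{l:pC=C}). The projections $L\to G$ and $L\to H$ preserve $p$-elements, since the condition $p^ky\to 0$ is testable componentwise; consequently the $p$-primary component of $L$ equals $G_p\oplus H_p$. The disjointness of $\pi(G)$ and $\pi(H)$ makes one of these two summands trivial for every prime $p$, and therefore $M_p\subseteq G$ when $p\in\pi(G)$ while $M_p\subseteq H$ when $p\in\pi(H)$. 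This forces $M=(M\cap G)\oplus(M\cap H)$.

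Since $X$ is closed and contains $\gp x$ it also contains $M$, so $M\cap G\subseteq X_G$ and $M\cap H\subseteq X_H$. By uniqueness of decomposition in the ambient topological direct sum $L=G\oplus H$, the $G$- and $H$-components $g,h$ of $x$ coincide with those supplied by the splitting of $M$; hence $g\in X_G$ and $h\in X_H$, and $X=X_G+X_H$. The sum is direct because $G\cap H=\{0\}$, and topologically direct because the structure is inherited from the topological direct sum $G\oplus H$.

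The main conceptual step is the splitting $M=(M\cap G)\oplus(M\cap H)$, and it is precisely the disjointness of the primary spectra $\pi(G)$ and $\pi(H)$ that makes it work: without this hypothesis a single primary component $M_p$ could project nontrivially onto both factors, and the $G$- and $H$-parts of elements of $X$ need not return to $X$. Once the splitting is in hand, the rest of the argument is routine bookkeeping with closed subgroups of a topological direct sum.
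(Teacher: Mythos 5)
Your proof is correct and follows essentially the same route as the paper: both arguments reduce to the decomposition $X=(X\cap G)\oplus(X\cap H)$ of an arbitrary closed subgroup (the paper writes this as the primary decomposition $X=X_\pi\oplus X_\sigma$ with $\pi=\pi(G)$, $\sigma=\pi(H)$) and then add the two pairs of components separately. The only difference is that the paper invokes this decomposition as a known fact about closed subgroups of periodic groups, whereas you supply a proof of it via the Sylow splitting of the compact monothetic subgroup $\gen x$ --- a worthwhile addition, but not a different method.
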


\begin{proof}
Put $\pi\defi \pi(G)$ and $\sigma\defi \pi(H)$. 
For closed subgroups $X$ and $Y$
there is a corresponding decomposition
\[X=X_\pi\oplus X_\sigma, \ \ Y= Y_\pi\oplus Y_\sigma.\]
Then $X_\pi+Y_\pi$ and $Y_\sigma+Y_\sigma$ are both closed subgroups in
respectively $G$ and $H$ by our assumptions. Hence
\[ X+Y=(X_\pi+ Y_\pi)\oplus(X_\sigma+Y_\sigma) \]
is a closed subgroup of $G\oplus H$.
\end{proof}

The following fact has already been observed in \cite[Remark 2]{muk5}.

\begin{lemma}\label{l:cyclic-stqh}
Let $I$ be a nonempty index set and select 
for every $i\in I$ a prime $p_i$. Set
\[A\defi \bigoplus_{i\in I}\Z(p_i)\times \prod_{i\in I}\Z(p_i).\]
Then $A$ is \stqh\ if, and only if, $I$ is finite.
\end{lemma}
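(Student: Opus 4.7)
The easy direction is when $I$ is finite: both $\bigoplus_{i\in I}\Z(p_i)$ and $\prod_{i\in I}\Z(p_i)$ coincide, so $A$ is a finite discrete abelian group, in which every subgroup and every sum of subgroups is automatically closed. Hence $A$ is trivially \stqh.

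For the forward direction I plan to prove the contrapositive, mimicking Example~\ref{ex:P+S} but now allowing the primes $p_i$ to vary. Write $S\defi\bigoplus_{i\in I}\Z(p_i)$ (discrete) and $P\defi\prod_{i\in I}\Z(p_i)$ (compact), so that $A=S\oplus P$ as topological groups, and let $\iota\colon S\to P$ denote the canonical embedding of $S$ as the dense subgroup of finitely supported elements. The strategy is to exhibit two closed subgroups of $A$ whose sum is a proper dense subgroup of $A$, and therefore not closed.

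The two candidates are the graph
\[X\defi\{(s,\iota(s)):s\in S\}\]
and $Y\defi S\oplus\{0\}$. Closedness of $Y$ is immediate, and $X$ is even discrete because $S$ is: in any convergent net $(s_\alpha,\iota(s_\alpha))\to(s,p)$ in $A$ the first coordinate is eventually constant equal to $s$, forcing $p=\iota(s)$. A short computation yields
\[X+Y=\{(s+t,\iota(s)):s,t\in S\}=S\oplus\iota(S).\]
Since $\iota(S)$ is dense in $P$ (each element of $P$ is the limit of its truncations to finite subsets of $I$), $X+Y$ is dense in $A$. On the other hand, when $I$ is infinite the group $P$ contains elements of infinite support, for instance one with a chosen nonzero entry in every coordinate, and these do not lie in $\iota(S)$; hence $X+Y$ is a proper subgroup of $A$. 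Thus $X+Y$ is a proper dense subgroup of $A$, which cannot be closed, and $A$ fails to be \stqh.

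The only real design choice in the argument is the selection of these two closed subgroups; once one commits to the graph construction already used in Example~\ref{ex:P+S}, the verifications are routine and uniform in the primes $p_i$, so no serious new obstacle arises.
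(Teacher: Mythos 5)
Your proof is correct and follows essentially the same route as the paper: there, too, $Y$ is the discrete summand and $X$ is the (discrete, hence closed) graph of the canonical dense embedding $\iota\colon\bigoplus_{i\in I}\Z(p_i)\to\prod_{i\in I}\Z(p_i)$, with the observation that $X+Y=S\oplus\iota(S)$ is dense and is closed precisely when $I$ is finite. The only cosmetic difference is that the paper introduces $X$ as the closed subgroup topologically generated by the elements $b_i+c_i$, which coincides with your graph.
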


\begin{proof}
Suppose that $A$ is \stqh.
Let $c_i$ be a topological generator of $\Z(p_i)$ 
in the profinite factor
\[C\defi \prod_{i\in I}\Z(p_i)\]
of $A$
 and $b_i$ for $\Z(p_i)$ in the discrete  factor
 \[B\defi  \bigoplus_{i\in I}\Z(p_i)\]
  of $A$. Then
\[A=B\oplus C\]
where $C$ is a compact open subgroup of $A$. For $i\in I$ 
set $a_i\defi b_i+c_i$ and define  $X\defi\gen{a_i:i\in I}$. 
View $X$ as the graph of the
obvious injection $\iota\colon B\to C$ in $B\times C$.
A graph of any continuous function is
always homeomorphic to the domain and 
therefore $X$ is a discrete subgroup of $G$.
Set $Y\defi B$. Then 
\[X+Y=\gp{c_i:i\in I}+\gp{b_i: i\in I}=B+\iota(B)\] 
is dense in $B\times C$ 
and is therefore closed if, and only if, $I$ is finite.
\end{proof}

\begin{remark}\rm\label{r:tqh-stqh}
If $I$ is infinite countable and 
the primes $p_i$ are pairwise different 
then we will show later, in Theorem \ref{t:M-periodic},
that $A$ is \tM\ and not \stqh. 
Note that $A$ is the projective limit with
compact kernels of discrete \stqhg s.
\end{remark}

The {\em good} 
properties of the class of \stqhg s are the 
following ones. 

\begin{proposition}\label{p:stqh-class}
Let $\mathfrak X$ be either the class of \tMg s or of \stqhg s.
Then $\mathfrak X$  is closed under 
\begin{enumerate}[\rm (a)]
\item passing to closed subgroups; and
\item passing to factor groups modulo closed normal subgroups.
\end{enumerate}
\end{proposition}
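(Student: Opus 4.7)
The plan is to handle the two classes separately and, within each, the two closure operations.

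For the class of \tMg s, both (a) and (b) were already recorded in Remark~\ref{rem:tMg}: a lattice is modular iff it contains no $E_5$, and this property is obviously inherited by sublattices and quotient lattices. The closed subgroup lattice of a closed subgroup $H$ of $G$ is a sublattice of $L(G)$, and the closed subgroup lattice of $G/N$ is order-isomorphic (via the preimage map $q^{-1}$ with $q:G\to G/N$) to the sublattice of $L(G)$ consisting of closed subgroups containing $N$. Thus an $E_5$ downstairs would lift to an $E_5$ upstairs, proving both (a) and (b) in this case.

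For the class of \stqhg s, part (a) is almost immediate: if $H$ is a closed subgroup of the \stqhg\ $G$ and $X,Y$ are closed subgroups of $H$, then $X$ and $Y$ are closed in $G$, so $X+Y$ is closed in $G$ by hypothesis; being contained in $H$, the subgroup $X+Y$ is also closed in $H$ for the subspace topology. For part (b), let $N$ be a closed subgroup of $G$ and write $q:G\to G/N$ for the quotient map. Given closed subgroups $\overline X,\overline Y$ of $G/N$, set $X\defi q^{-1}(\overline X)$ and $Y\defi q^{-1}(\overline Y)$. These are closed in $G$ and both contain $N$, so $X+Y$ is a closed subgroup of $G$ containing $N$, hence $N$-saturated, i.e., $X+Y=q^{-1}q(X+Y)$. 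Since $q$ is open, it sends closed $N$-saturated sets to closed sets, so $\overline X+\overline Y=q(X+Y)$ is closed in $G/N$.

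The only point requiring any care is the last one, namely that the image of an $N$-saturated closed set under the open map $q$ is closed; this follows by noting that $(G/N)\setminus q(X+Y)=q(G\setminus(X+Y))$ is open. Apart from this standard general-topology fact, there is no real obstacle.
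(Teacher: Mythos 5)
Your proposal is correct and follows essentially the same route as the paper: the modular case is handled via $E_5$-freeness of sublattices and the lattice of closed subgroups containing $N$, and the \stqh\ case via closedness in $G$ for subgroups, and via the correspondence between closed $N$-saturated subgroups of $G$ and closed subgroups of $G/N$ (the paper packages this last fact as Lemma~\ref{l:quotient-closed}, which you re-derive inline). No gaps.
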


For its proof we first establish an elementary fact.

\begin{lemma}\label{l:quotient-closed}
Let $G$ be a topological group and $N$ a closed normal subgroup.
Then any subgroup $S$ containing $N$ is closed in $G$ if and only if
$S/N$ is a closed subgroup of $G/N$.
\end{lemma}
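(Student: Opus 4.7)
The plan is to reduce everything to the standard fact that the quotient map $\pi\colon G\to G/N$ associated with a closed normal subgroup $N$ is a continuous open surjection, hence a quotient map in the topological sense.

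First I would observe the saturation identity $S=\pi^{-1}(\pi(S))=\pi^{-1}(S/N)$, which holds precisely because $N\subseteq S$: any element in the $\pi$-preimage of a coset $sN$ lies in $sN\subseteq S$. This identity is what tethers closedness of $S$ in $G$ to closedness of $S/N$ in $G/N$.

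For the implication ``$S/N$ closed $\Rightarrow$ $S$ closed'' I would simply invoke continuity of $\pi$: the preimage $\pi^{-1}(S/N)=S$ of a closed set is closed. For the converse, I would use that $\pi$ is an open map (because $\pi^{-1}(\pi(U))=UN$ is open whenever $U$ is open), which together with surjectivity makes $\pi$ a quotient map. For a quotient map, a subset $T\subseteq G/N$ is closed if and only if $\pi^{-1}(T)$ is closed in $G$; applying this to $T=S/N$ and using the saturation identity gives closedness of $S/N$ from closedness of $S$.

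There is essentially no obstacle here; the only thing to be careful about is that one should not confuse being closed in the subspace topology with being closed in $G/N$, and one should explicitly record that $N\subseteq S$ is what makes the saturation identity valid (without this hypothesis the statement is false, as $\pi(S)$ could be nonclosed even when $S$ is closed). The lemma is really a reformulation of the universal property of the quotient topology applied to $\pi$.
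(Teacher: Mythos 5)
Your proof is correct and follows essentially the same route as the paper's: both rest on the saturation identity $\pi^{-1}(\pi(S))=S+N=S$ (valid because $N\subseteq S$) combined with the defining property of the quotient topology that a subset of $G/N$ is closed if and only if its preimage under $\pi$ is closed. Your version merely spells out the two directions (continuity for one, openness/quotient-map property for the other) that the paper's one-line argument leaves implicit.
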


\begin{proof} 
Let $\phi:G\to G/N$ denote the quotient map. Then $\phi(S)\subseteq G/N$
is closed if, and only if, $S+N=\phi^{-1}(\phi(S))$ is closed in $G$.
\end{proof}

\begin{proof}[Proof of Proposition \ref{p:stqh-class}]
When $\mathfrak X$ is the class of all \tMg s then 
(a) and (b) follow from the fact that the lattice
of closed subgroups must not contain the graph $E_5$.

\msk 

We turn to $\mathfrak X$ being the class of \stqhg s.
Let $G\in\mathfrak X$ and $L$ a closed subgroup. Then the product
of any two closed subgroups of $L$ is a closed subgroup of $G$ and hence
of $L$. Thus $L$ is \stqh.

That $G/N$ is \stqh\ follows from Lem\-ma \ref{l:quotient-closed}.
\end{proof}

A fact about certain $p$-groups of exponent $p^2$ and the
local product 

\[L\defi\ZppZp \tag{*}\label{eq:p2p}\]

will be needed.

From Example \ref{ex:p-quadrat} it should be clear that
we are looking for information which secures that a locally compact
abelian $p$-group may have a quotient which contains a subgroup 
isomorphic to $L$ in Eq.~(\ref{eq:p2p}).


The following discussion serves this purpose

The group $L$ has two significant components, namely,
\[P\defi p\Z(p^2)^\N, \]
the socle of $L$, a compact open characteristic subgroup, and
\[F\defi\Z(p^2)^{(\N)},\]
a noncharacteristic dense countable subgroup
such that 
\[L\defi F+P, \mbox{ and } p\.F=F\cap P, \mbox{ dense in }P.\]
We observe that we have a basis of compact open zero neighborhoods
\[P_m\defi p\Z(p^2)^{\{n\in\N:m\le n\}},\ m\in\N\]
in $L$, and an ascending union of discrete 
finite subgroups $F_0=\{0\}$ and
\[F_m\defi\Z(p^2)^{\{n\in\N: n<m\}},\ m\in\N\]
such that 
\begin{align*}
F=\bigcup_{m\in\N}F_m \mbox{ and} \\
L=\bigcup_{m\in\N}(F_m\oplus P_m)    &=\colim_{m\in\N}F_m\oplus P_m,\\ 
  (\forall m\in\N)\ \ (p\.F_m\oplus P_m) &= P.\hfill\\
\end{align*}
Finding a copy of $L$ in a $p$-group $G$ now amounts to finding
cyclic subgroups $Z_k$ isomorphic $\Z(p^2)$ matching these configurations.

\begin{lemma}\label{l:p2p}
Assume that a locally compact abelian $p$-group $G$
has a descending basis of $0$--neighborhoods of compact open subgroups 
$V_1\supseteq V_2\supseteq V_3\supseteq\cdots$ 
and  a family $\{Z_k:k\in\N\}$ of subgroups with
isomorphisms  $\zeta_k\colon \Z(p^2)\to Z_k$
 satisfying the following conditions

\begin{enumerate}[\hspace*{1em}\rm(a)]
\item $V_1$ has exponent $p$.
\item The sum $Z_1+\cdots+Z_m+V_{m+1}$ is direct (algebraically and
topologically) for $m=1,2,\dots$.

\item $p\.Z_k\subseteq V_k$ for $k=1,2,\dots$.
\end{enumerate}

\noindent Set $F_G\defi\sum_{k\in\N}Z_k$, and $L_G\defi\overline{F_G}$,
further $P_G\defi\overline{p\.F_G}\subseteq V_1$.
Then $L_G$ is equal to the sum $F_G+P_G$ and all of the following holds: 
\begin{enumerate}[\rm(i)]
\item there is an algebraic isomorphism 
$\eta_F\colon \Z(p^2)^{(\N)}\to F_G$ such that the restriction
to the $k$-th summand is $\zeta_k$.
\item There is an isomorphism of compact groups
$\eta_P\colon p\Z(p^2)^\N \to P_G$ 
such that the restriction to the $k$-th factor is $\zeta_k|p\Z(p^2)$.
\item There is an isomorphism of topological groups
$\eta_L\colon L\to L_G$.
\end{enumerate}
\end{lemma}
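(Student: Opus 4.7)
The plan is to construct the three homomorphisms $\eta_F$, $\eta_P$, $\eta_L$ in that order, using hypotheses (a)--(c) cumulatively. I identify $L$ with $F_L + P_L$, where $F_L = \Z(p^2)^{(\N)}$ and $P_L = p\Z(p^2)^{\N}$ is the compact open subgroup. For (i), declaring $\eta_F$ to agree with $\zeta_k$ on the $k$-th summand defines an algebraic homomorphism $\Z(p^2)^{(\N)} \to F_G$, and condition (b) --- which in particular asserts that every finite partial sum $Z_1 + \cdots + Z_m$ is internally direct --- immediately makes it an isomorphism.

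For (ii), given $(r_k) \in p\Z(p^2)^\N$, form the partial sums $s_m \defi \sum_{k \le m} \zeta_k(r_k)$. By (c) and the nesting $V_k \subseteq V_{m+1}$ for $k > m$, one has $s_j - s_m \in V_{m+1}$ for all $j \ge m$; hence $(s_m)$ is Cauchy in the compact (therefore complete) group $V_1$, and I set $\eta_P((r_k)) \defi \lim_m s_m$. The same estimate gives continuity at $0$. The image is closed in $V_1$ (continuous image of a compact group) and contains $p \cdot F_G$, so it equals $P_G$. For injectivity, if $\lim s_m = 0$ then for each fixed $m$ one has $s_m \in V_{m+1}$ (pick $j$ large so that both $s_j$ and $s_j - s_m$ lie in $V_{m+1}$); but $s_m \in Z_1 + \cdots + Z_m$, and condition (b) gives $(Z_1 + \cdots + Z_m) \cap V_{m+1} = 0$, so $s_m = 0$, whence each $r_k = 0$ by internal directness. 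A continuous bijection from a compact group onto a Hausdorff one is a homeomorphism, establishing (ii) and showing that $P_G$ is compact.

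For (iii) and the equality $L_G = F_G + P_G$, define $\eta_L\colon L \to G$ by picking, for each $(y_k) \in L$, an $m$ so large that $y_k \in p\Z(p^2)$ for all $k > m$, and setting $\eta_L((y_k)) \defi \sum_{k \le m}\zeta_k(y_k) + \eta_P((\tilde r_k))$, where $\tilde r_k \defi y_k$ for $k > m$ and $\tilde r_k \defi 0$ otherwise. Independence of $m$ and continuity at $0$ follow from the same Cauchy estimate, and the image is manifestly $F_G + P_G$. Injectivity mirrors the proof for $\eta_P$: from $\eta_L((y_k)) = 0$ one extracts an element of $Z_1 + \cdots + Z_m$ equal to the negative of an element of $V_{m+1}$; condition (b) forces both to vanish, and then the finite directness together with the injectivity of $\eta_P$ give $(y_k) = 0$.

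The principal obstacle is upgrading $\eta_L$ to a topological isomorphism onto $L_G$. The decisive step --- the only place where hypothesis (a) is used --- is that $P_G$ is open in $F_G + P_G$. Indeed, an element $f + r \in F_G + P_G$ lies in $V_1$ if and only if $f \in V_1$ (since $r \in P_G \subseteq V_1$); but (a) forces every element of $V_1$ to be $p$-torsion, and the $p$-torsion of $F_G \cong \Z(p^2)^{(\N)}$ is precisely $p \cdot F_G \subseteq P_G$, so $V_1 \cap (F_G + P_G) = P_G$. Since $\eta_L|_{P_L} = \eta_P$ is a homeomorphism onto $P_G$ by (ii), $\eta_L$ is a local homeomorphism at the identity, hence a topological isomorphism onto $F_G + P_G$. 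Consequently $F_G + P_G$ inherits local compactness from $L$ and is therefore closed in $G$; it contains $F_G$ and so contains $L_G = \overline{F_G}$, while the reverse inclusion is immediate. Thus $L_G = F_G + P_G$ and $\eta_L\colon L \to L_G$ is the desired topological isomorphism.
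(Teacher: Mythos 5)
Your proof is correct, and although it follows the same overall skeleton as the paper's (build $\eta_F$ from the free-module property, identify $P_G$ with the product, then glue $\eta_F$ and $\eta_P$ along $pF$), the two decisive technical steps are carried out by genuinely different means. For (ii), the paper works from $P_G$ outward: it proves $P_G = pF_{G,k}\oplus V_{k+1}$ for every $k$, assembles the resulting projections into a compatible cone over the inverse system $(pF_{G,k})_k$, and invokes the universal property of $\varprojlim$ to conclude $P_G\cong\prod_k pZ_k$. You instead construct the inverse map directly by summing the convergent series $\sum_k\zeta_k(r_k)$ in the complete group $V_1$, with (b) giving injectivity via $(Z_1+\cdots+Z_m)\cap V_{m+1}=\{0\}$ and (c) giving the Cauchy estimate, and then use compactness to upgrade the continuous bijection to a homeomorphism; this is more elementary and sidesteps the projective-limit formalism. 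Second, the paper obtains $L_G=F_G+P_G$ at the outset: it assumes $G=L_G$, uses density of $F_G$ together with (a) and (c) to get $F_G\cap V_1=pF_G$ and hence $V_1=\overline{F_G\cap V_1}=P_G$ open, from which $G=F_G+P_G$ follows. You derive the same equality at the end: you show $V_1\cap(F_G+P_G)=P_G$ (the single use of (a) in both arguments), conclude that $\eta_L$ is a topological isomorphism onto $F_G+P_G$, and then invoke the closedness of locally compact subgroups of Hausdorff groups to get $F_G+P_G\supseteq\overline{F_G}=L_G$. Both routes are sound; yours trades the density argument for the closed-subgroup fact and never needs to replace $G$ by $L_G$, while the paper's early identification $P_G=V_1$ makes the openness of $P_G$ available from the start.
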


\begin{proof}
There is no loss of generality to assume $G=L_G$.
For each $k$ we have an isomorphism $\zeta_k\colon\Z(p^2)\to Z_k$
by the definition of $Z_k$. Conclusion (i) follows from Assumption (b).
Since $F_G$ is dense in $L_G=G$ we have 
that $F_G\cap V_1$ is dense in the open set $V_1$,
i.e., $\overline{F_G\cap V_1}=V_1$. 
Since $V_1$ has exponent $p$ the equation $F_G\cap V_1=pF_G$
follows from (c). 
Then 
\begin{equation}\label{eq:FGV1}
V_1=\overline{F_G\cap V_1}=\overline{pF_G}=P_G \end{equation}
is open in $G$. As a consequence
by the density of $F_G$ in $L_G$ we have 
\begin{equation}G=F_G+P_G.\label{eq:FGPG}\end{equation} 
Furthermore,
\[(\forall k\in\N)\, Z_k\cap P_G=Z_k\cap V_1\cap F_G=Z_k\cap pF_G=pZ_k.\]

We now prove (ii). 

Let us set $F_{G,k}\defi \bigoplus_{1\le j\le k}Z_j$. 
We claim that for any $k\ge1$
\[P_G=pF_{G,k}\oplus V_{k+1}.\] 
Passing in (c) on both sides to the union and noting
that $(V_l)_{l\ge1}$ is decreasing one obtains 
\[\bigcup_{l\ge1}pZ_l=
\left(\bigcup_{1\le l\le k}pZ_l\right)\cup\left(\bigcup_{l\ge k+1}pZ_k\right)
\subseteq 
\left(\bigcup_{1\le l\le k}pZ_k\right)\cup \left(\bigcup_{l\ge k+1}V_l\right)
=
\left(\bigcup_{1\le l\le k}pZ_k\right)\cup V_{k+1}.\]
Observing that the set on the left hand side generates $pF_G$
and, taking (b) into account, one arrives at
$pF_G\le pF_{G,k}\oplus V_{k+1}$.
 
Hence $P_G=\overline{pF_{G}} \le pF_{G,k}\oplus V_{k+1}$.
For proving the converse containment, we take Eq.~(\ref{eq:FGV1}) into
account and observe   
\[pF_{G,k}\oplus V_{k+1}\le pF_G+V_1=P_G.\] 
Thus,
\[P_G=pF_{G,k}\oplus V_{k+1},\]
that is, there is a projection $p_k\colon P_G\to pF_{G,k}$,
and for each $k\ge2$ there is a canonical
projection $\phi_k:pF_{G,k}\to pF_{G,k-1}$ with kernel $pZ_k$.
So $(pF_{G,k},\phi_k)_{k\in\N}$ forms 
an inverse system with projective limit 
$\varprojlim_k pF_{G,k}= \prod_{k\ge1}pZ_k$. 
Let us prove the equality \[\phi_k\circ p_k=p_{k-1}, \ \ k\ge2.\]
As $P_G=pF_{G,k}\oplus V_{k+1}$ we may decompose $x\in P_G$
as $x=pf+v$ for $f\in F_{G,k}=\bigoplus_{1\le j\le k}Z_j$ and $v\in V_{k+1}$.
Therefore
\[(\phi_k\circ p_k)(pf+v)=\phi_k(pf).\]
Decomposing $f=f_1+z_k$ for some $f_1\in F_{G,k-1}$ and $z_k\in Z_k\le V_k$,
the expression on the right yields 
\[\phi_k(pf)=\phi_k(pf_1+pz_k)=pf_1=p_{k-1}(pf_{k-1}+pz_k+v)=p_{k-1}(x),\] 
as needed. 
Therefore, by the universal property of the limit, 
there is a unique morphism 
\[\phi\colon P_G\to \varprojlim_kpF_{G,k}=\prod_k pZ_k. \]
Since all morphisms $(p_k)_{k\ge1}$ are surjective, so is
$\phi$ and and since these morphisms separate the points,
$\phi$ is an isomorphism of compact groups.
By the definition of $Z_k$ we have an isomorphism 
$\alpha\colon p\Z(p^2)^\N\to \prod_{k\ge1}pZ_k$ so that the
restriction and corestriction to the $k$-th factor agrees
with $\zeta_k|p\Z(p^2):p\Z(p^2)\to pZ_k$. 
Thus, as has been claimed, 
 $\eta_P=\phi^{-1}\circ\alpha\colon p\Z(p^2)^\N\to P_G$
is an isomorphism 
mapping the $k$-th factor of $p\Z(p^2)^\N$ to $pZ_k\subseteq P_G$.

For a proof of (iii) denote $\Z(p^2)^{(\N)}$ by $F$ and note
that it is a free $\Z(p^2)$-module. Therefore there is a homomorphism
$\eta_F:F\to \sum_{k\ge1}Z_k$ which restricts to $\zeta_k$ on the
$k$-th direct summand of $F$. Take an element 
$z=(z_n)_{n\in\N}\in \Z(p^2)^{(\N)} \cap p\Z(p^2)$. 
Then $\eta_F(z)=\sum_{n\in\N} \zeta_n(z_n)$ by (i) in view of the
definition of a direct sum. Therefore the restrictions of respectively $\eta_F$
and $\eta_P$ from (ii)
to $pF=(p\Z(p^2))^{(\N)}$ agree. 
Setting $P\defi (p\Z(p^2))^{\N}$ one observes that
$\eta_F$ and $\eta_P$ agree on $F\cap P$ and thus define
a unique algebraic morphism $\eta\colon L=F+P\to F_G+P_G=G$, 
where $F_G$ and $P_G$ are as in Eq.~(\ref{eq:FGPG}). 

Since $\eta$ agrees on the open subgroup $P$ with the continuous and
open map $\eta_P$ it is continuous and open. 
Since $\eta_F$ is an isomorphism,
$F_G$ is in the image of $\eta$. Similarly, $P_G$ is in the image of
$\eta$ as well. Hence $\eta$ is surjective. If $\eta(z)=0$, and 
$z\in F$, then $0=\eta(z)=\eta_F(z)$ implies $z=0$ since $\eta_F$
is injective. If $z\in P$ then $0=\eta_F(z)=\eta_P(z)$ and
by (ii) we must have $z=0$.  
If $z=f+v$ for $f\in F$ and $v\in P$ 
then $0=\eta(z)=\eta(f)+\eta(v)$ implies 
$0=p\eta(f)=\eta(pf)$ so that from $pf\in P$ and $\eta\vert_{P}=\eta_F$
we may deduce $pf=0$. Therefore $f$ itself belongs to $P$
and thus $\eta(f+\nu)=0$ implies $z=f+\nu=0$.
Hence $\eta$ 
is injective and thus is an isomorphism of topological groups.
This completes the proof.
\end{proof}

\begin{proposition}\label{p:findim-neu}
Let $U$ be a closed totally disconnected subgroup of 
a compact connected $n$-dimensional abelian group $G$. Then,
for every $p\in\pi(G)$, the \prank\ of the \psyl p subgroup 
$U_p$ of $U$ is bounded by $n$. 

In particular, every subgroup of $G$ of finite exponent is finite. 
\end{proposition}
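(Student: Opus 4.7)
The plan is to work dually. By \cite[Theorem 8.22]{hofmor}, the hypothesis on $G$ is equivalent to $\hat G$ being discrete and torsion-free of $\Z$-rank $n$. The profinite subgroup $U$ dualizes to the discrete torsion group $\hat U\cong \hat G/U^\perp$, and the primary decomposition $U=\prod_q U_q$ corresponds to $\hat U=\bigoplus_q\hat U_q$. Pontryagin duality pairs the quotient $U_p/pU_p$ with the annihilator $(pU_p)^\perp=\socle_p(\hat U_p)=\socle_p(\hat U)$, giving
\[
\rank_p(U_p)=\dim_{\mathbb F_p}(U_p/pU_p)=\dim_{\mathbb F_p}\socle_p(\hat U).
\]

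To estimate the right hand side, I would set $H\defi\{x\in\hat G:px\in U^\perp\}$, the preimage of $\socle_p(\hat U)$ under the quotient $\hat G\to\hat G/U^\perp\cong\hat U$. Then $\socle_p(\hat U)\cong H/U^\perp$, and since $pH\subseteq U^\perp$ the surjection $H\twoheadrightarrow H/U^\perp$ factors through $H/pH$, giving
\[
\dim_{\mathbb F_p}\socle_p(\hat U)\le\dim_{\mathbb F_p}(H/pH).
\]
The group $H$ embeds in the torsion-free group $\hat G$ of rank $n$, hence is itself torsion-free of $\Z$-rank at most $n$. It then suffices to invoke the elementary fact that any torsion-free abelian group $A$ of $\Z$-rank $r$ satisfies $\dim_{\mathbb F_p}(A/pA)\le r$: pick $r+1$ elements $v_1,\dots,v_{r+1}\in A$ with an integer relation $\sum c_iv_i=0$, divide out the largest common power of $p$ from the $c_i$ (using torsion-freeness) so that some $c_i$ becomes coprime to $p$, and reduce modulo $p$ to produce a nontrivial $\mathbb F_p$-dependence of the $\bar v_i$ in $A/pA$. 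Combined, this yields $\rank_p(U_p)\le n$.

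For the ``in particular'' statement, let $S\le G$ have exponent $m$, so $S\subseteq G[m]$. The identity component of $G[m]$ is a connected compact abelian group of exponent $m$; being also divisible, it must be trivial, so $G[m]$ is totally disconnected. Applying the main part, each $p$-component $G[m]_p$ (with $p\mid m$) has $\rank_p\le n$ and bounded exponent, so by Lemma \ref{l:Rb} --- which forces the shape $\Z_p^k\oplus F$ with $F$ finite, and $k=0$ because of the exponent bound --- the component $G[m]_p$ is finite. Since only finitely many primes divide $m$, $G[m]$ is finite, and hence so is $S$. The only slightly delicate step is the bookkeeping in the duality dictionary that identifies $(U_p/pU_p)^\wedge$ with $\socle_p(\hat U)$ sitting inside $\hat G/U^\perp$; everything else is routine.
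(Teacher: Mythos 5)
Your proof is correct, and it follows the same broad strategy as the paper --- dualize so that $U$ becomes the discrete torsion quotient $\hat U\cong\hat G/U^\perp$ of the \tf\ rank-$n$ group $\hat G$, and then bound the $p$-socle of $\hat U$ --- but the key counting step is carried out differently. The paper first invokes \cite[Corollary 8.24(iv)]{hofmor} to get $\dim(G/U)=n$, so that $\widehat{G/U}\cong U^\perp$ contains a copy of $\Z^n$, and then exhibits $\hat U$ as a subgroup of a quotient of $\Q^n/\Z^n\cong\bigoplus_p\prf p^n$, whose $p$-rank is visibly $n$. You instead pull $\socle_p(\hat U)$ back to $H=\{x\in\hat G:px\in U^\perp\}$ and bound $\dim_{\mathbb F_p}(H/pH)$ by the $\Z$-rank of the \tf\ group $H$ via an elementary dependence argument. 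Your route is slightly more self-contained: it needs only that $\hat G$ is \tf\ of rank $n$, not that $U^\perp$ has full rank $n$, so both the citation of Corollary 8.24(iv) and the explicit structure of $\Q^n/\Z^n$ are avoided; the price is the small bookkeeping with $(pU_p)^\perp=\socle_p(\hat U_p)$, which is consistent with the paper's own identification of $\rank_p(U_p)$ with the rank of the socle of $\hat U_p$. The ``in particular'' statement is handled essentially as in the paper (reduce to a compact group of finite exponent, split into finitely many $p$-components, and combine finite \prank\ with finite exponent via Lemma \ref{l:Rb}), except that you pass through $G[m]$ and justify its total disconnectedness by noting its identity component is divisible of finite exponent, where the paper instead applies Proposition \ref{p:cpt-tg} to the closure of the subgroup; both are valid.
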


\begin{proof}
By \cite[Corollary 8.24(iv)]{hofmor} we have $\dim(G/U)=\dim(G)=n$.
Observing that $G$ is connected, duality applied to the
exact sequence
\[\{0\}\to U\to G\to G/U\to \{0\}\]
renders an exact sequence
\[\{0\}\to \widehat{G/U}\to \hat G\to \hat U\to\{0\}\]
where the second and the third term are \tf\ groups of $\Z$-rank $n$
and $\hat U$ is a discrete torsion group.
Since $\hat G$ is a subgroup of $\Q^n$ and $\widehat{G/U}$ must contain
a subgroup $L\cong\Z^n$ it follows that $\hat U$ must be a subgroup
of a quotient of
\[\Q^n/L\cong \bigoplus_p\prf p^n.\]
Therefore the \prank\ of $\Q^n/L$ does not exceed $n$. Hence
$\hat U$ has \prank\ not exceeding $n$. 
Recalling from \cite[Definition 3.1]{HHR-comfort} that the \prank\ of
$U_p$ is precisely the \prank\ of the socle  of $\hat U_p$ we arrive
at $\rank_p(U_p)\le n$, as needed.

\msk

For proving the second statement, let $E$ be a subgroup of $G$ of 
finite exponent, say $e$. Then its closure $\overline E$ also has
exponent $e$ and thus there is no loss of generality to assume that
$E$ is closed and hence compact. 
Therefore Proposition \ref{p:cpt-tg} implies that 
$E=\prod_{p\in S}E_p$ for a finite set of $S$ of primes and,
moreover, $E_p$ has finite exponent. 
By the first part of the proof we know that $\rank_p(E_p)$ must
be finite, and therefore, $E_p$ is finite for every $p\in S$, and
so is $E$.
\end{proof}

\begin{corollary}\label{c:findim-neu}
Suppose $G$ is a \lca\ group with compact finite dimensional connected 
component $G_0$ and suppose that $\tor(G/G_0)$ is a discrete subgroup
of $G/G_0$ and has finite exponent, say $e$.
Then $E\defi\gp{x\in G:ex=0}$ is a discrete subgroup of $G$.
\end{corollary}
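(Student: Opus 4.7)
The plan is to exploit the short exact sequence $\{0\} \to G_0 \to G \to G/G_0 \to \{0\}$ and show that $E$ sits in a corresponding short exact sequence whose outer terms are both discrete, with the added feature that the kernel piece is finite and open in $E$.

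First I would observe that $E$ is actually equal to the kernel of the continuous homomorphism $x \mapsto ex$, since $\{x \in G : ex = 0\}$ is already a subgroup, so $E$ is automatically closed. Next I would look at the restriction of the quotient map $\pi\colon G \to G/G_0$ to $E$. Since every element of $E$ has order dividing $e$, the image $\pi(E)$ lies inside $\tor(G/G_0)$, which by hypothesis is discrete in $G/G_0$. Consequently $\pi(E)$ is discrete, and there is an open $0$-neighborhood $V$ in $G/G_0$ with $V \cap \pi(E) = \{0\}$.

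Now I would turn to the kernel piece $E \cap G_0$. This is precisely the $e$-torsion of the compact connected finite-dimensional group $G_0$, i.e., a subgroup of $G_0$ of exponent dividing $e$. Proposition \ref{p:findim-neu} tells us that every subgroup of a compact connected finite-dimensional abelian group of finite exponent is finite, so $E \cap G_0$ is finite.

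Finally I would assemble the pieces. The open set $\pi^{-1}(V) \subseteq G$ satisfies $\pi^{-1}(V) \cap E = \{x \in E : \pi(x) \in V \cap \pi(E)\} = \{x \in E : \pi(x) = 0\} = E \cap G_0$, so $E \cap G_0$ is open in $E$. Being finite (hence Hausdorff and discrete in the subspace topology), $\{0\}$ is open in $E \cap G_0$, and therefore $\{0\}$ is open in $E$. Hence $E$ is discrete. I do not anticipate a real obstacle beyond correctly invoking Proposition \ref{p:findim-neu} to conclude finiteness of $E \cap G_0$; once that is in hand, the combination ``finite kernel that is open + discrete image'' forces discreteness.
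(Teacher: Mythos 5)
Your proof is correct and rests on the same two pillars as the paper's own argument: the finiteness of $E\cap G_0$ obtained from Proposition \ref{p:findim-neu}, and the discreteness of $\tor(G/G_0)$ forcing the image of $E$ in $G/G_0$ to be isolated at $0$. The only difference is in the final assembly: where the paper invokes the open-mapping isomorphism $(E+G_0)/G_0\to E/(E\cap G_0)$ and a compact open subgroup of $E$ meeting $E\cap G_0$ trivially, you simply pull back an open neighbourhood of $0$ in $G/G_0$ meeting $\pi(E)$ only in $0$ to exhibit the finite group $E\cap G_0$ as an open subgroup of $E$ --- a slightly more direct route to the same conclusion.
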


\begin{proof}
By \cite[(5.23) Lemma]{hewitt-ross1-book} the isomorphism
$(E+G_0)/G_0\to E/(E\cap G_0)$ maps open sets to open sets 
and since $(E+G_0)/G_0$
is discrete we may conclude that $E/(E\cap G_0)$ is discrete.
Proposition \ref{p:findim-neu} shows that $F\defi E\cap G_0$ is finite.
Since $E$ is a closed totally disconnected subgroup of $G$ it contains
a compact open subgroup $V$ with $V\cap F=\{0\}$. Therefore the
discrete compact and hence finite group $(V+G_0)/G_0$
maps onto $V/(V\cap G_0)$ by the above map showing that $V$ itself is
finite. Thus $E$ is a discrete subgroup of $G$. 
\end{proof} 

\begin{lemma}\label{l:XYU} 
Let $G$ be a \lca\ group and $X$ and $Y$ be closed subgroups.
Let $U_X$ and $U_Y$ be compact subgroups of respectively $X$ and $Y$.
Then $\til X\defi X+U_Y$ and $\til Y\defi Y+U_X$ 
are closed and $\til X+\til Y=X+Y$. 
Letting $K\defi U_X+U_Y$, the subgroup 
$X+Y$ is closed in $G$
if, and only if, $\til X/K+\til Y/K$ is closed in $G/K$.

If there is a compact subgroup $U$ with $U_X=X\cap U$ and 
$U_Y=Y\cap U$ then we have $\til X\cap U=\til Y\cap U=K$. If, in
addition, $U$ is open, then $X/(X\cap U)$ is a discrete
subgroup of $G/(X\cap U)$.
\end{lemma}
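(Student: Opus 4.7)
The plan is to dispatch each assertion by straightforward algebra combined with two standard tools: the fact that the sum of a closed and a compact subgroup in a topological group is closed, and the quotient criterion of Lemma~\ref{l:quotient-closed}.

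First I would observe that $\til X = X + U_Y$ is closed because $X$ is closed and $U_Y$ is compact; likewise for $\til Y$. The identity $\til X + \til Y = X + Y$ is immediate from $U_X \subseteq X$ and $U_Y \subseteq Y$. Next, the compact subgroup $K = U_X + U_Y$ lies in $\til X \cap \til Y$, since $U_X$ and $U_Y$ are visibly contained in both $\til X$ and $\til Y$; it also lies in $X + Y$. Applying Lemma~\ref{l:quotient-closed} with $S = X + Y$ and $N = K$ then gives the desired equivalence, because under the quotient map $G \to G/K$ the image of $X + Y = \til X + \til Y$ is precisely $\til X/K + \til Y/K$.

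For the intersection identity, assume $X \cap U = U_X$ and $Y \cap U = U_Y$. The inclusion $K \subseteq \til X \cap U$ is clear, since $U_X, U_Y \subseteq U$ and $U$ is a subgroup. For the converse, any $z \in \til X \cap U$ can be written as $z = x + u_Y$ with $x \in X$ and $u_Y \in U_Y$; then $x = z - u_Y$ lies in $U$, forcing $x \in X \cap U = U_X$ and hence $z \in K$. The argument for $\til Y \cap U = K$ is symmetric.

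Finally, suppose $U$ is also open. Then $X \cap U$ is a compact open subgroup of $X$, and I need to verify that the canonical injection $X/(X \cap U) \hookrightarrow G/(X \cap U)$ lands in a discrete subgroup. Let $\pi\colon G \to G/(X \cap U)$ be the quotient map. Because $X \cap U \subseteq U$, the subgroup $U$ is $\pi$-saturated, so $\pi(U)$ is open. A direct check yields $\pi(X) \cap \pi(U) = \pi(X \cap U) = \{0\}$, so $\{0\}$ is open in the subspace topology on $\pi(X) = X/(X \cap U)$, which proves the discreteness. This final topological verification is the only step that requires a bit of care; everything else is bookkeeping.
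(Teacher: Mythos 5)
Your proof is correct and follows essentially the same route as the paper: closedness of $\til X,\til Y$ from ``closed plus compact,'' the sum identity by absorption, Lemma~\ref{l:quotient-closed} applied to $X+Y\supseteq K$, and the same direct computation for $\til X\cap U=K$. The only (harmless) variation is in the final step, where you show $\{0\}=\pi(X)\cap\pi(U)$ is open in the subspace topology on $X/(X\cap U)$ directly, whereas the paper cites the open isomorphism $(X+U)/U\to X/(X\cap U)$ from Hewitt--Ross; both arguments are valid.
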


\begin{proof}
Since $K=U_X+U_Y$ is the sum of
compact subgroups of $G$ it is compact. Certainly 
\[\til X+\til Y=X+U_Y+Y+U_X=X+U_X+Y+U_Y=X+Y.\]
Now the result follows from Lemma \ref{l:quotient-closed}.

For proving the second statement, we only show that $\til X\cap U=K$,
as the equality $\til Y\cap U=K$ can be proved along the same lines.
By construction, $K\le \til X\cap U$. Now fix $z\in \til X\cap U$.
Then there are $x\in X$ and $y\in Y\cap U=Y_U$ with $z=x+y$. 
Since $y\in K\le U$ and $z\in U$ we can conclude $x\in X\cap U=X_U$. Hence
$z=x+y\in X_U +K=K$. Thus $\til X\cap U=K$.

Suppose that $U$ is compact open. Then $(X+U)/U$ is discrete and
\cite[(5.32) Theorem]{hewitt-ross1-book} 
implies that the isomorphism $(X+U)/U\to X/(X\cap U)$ maps open
subsets of $(X+U)/U$ to open subsets of $X/(X\cap U)$. 
Hence $X/(X\cap U)$ is a discrete subgroup of $G/(X\cap U)$. 
\end{proof}

\begin{lemma}\label{l:XK}
Let $G$ be a \lca\ group.
Suppose $K$ is a compact subgroup of $G$ and $X$ a closed subgroup of $G$. 
Then $X+K$ is closed in $G$. Furthermore, $X$ is compact
if, and only if, $(X+K)/K$ is compact.
\end{lemma}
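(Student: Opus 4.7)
The plan is to handle the two assertions in sequence, with the closedness of $X+K$ serving as a warm-up.

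For the closedness of $X+K$, I would give a direct net argument: given a convergent net $x_\alpha + k_\alpha \to g$ with $x_\alpha\in X$ and $k_\alpha\in K$, compactness of $K$ permits passing to a subnet on which $k_\alpha\to k\in K$; then $x_\alpha \to g-k$, and closedness of $X$ forces $g-k\in X$, whence $g\in X+K$. Equivalently, with $q\colon G\to G/K$ the quotient map, one has $X+K=q^{-1}(q(X))$, so by Lemma~\ref{l:quotient-closed} it is enough to check that $q(X)$ is closed in $G/K$, which holds because the quotient map by a compact subgroup is a closed map.

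For the equivalence, the forward direction is immediate: if $X$ is compact then so is $X+K$, being the continuous image of the compact set $X\times K$ under addition, and hence so is its further continuous image $(X+K)/K = q(X+K)$.

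The reverse direction is the substantive half. The key auxiliary fact I would invoke is that the quotient map $q\colon G\to G/K$ by the compact subgroup $K$ is a \emph{proper} map, i.e.\ preimages of compact sets are compact. Granted this, if $(X+K)/K = q(X)$ is compact, then $X+K = q^{-1}(q(X))$ is compact as well, and $X$, as a closed subset of $G$ contained in the compact set $X+K$, is itself compact. To verify properness I would fix a compact neighborhood $W$ of $0$ in the locally compact group $G$, note that $q(W)$ is then a compact neighborhood of $0$ in $G/K$, cover a given compact $C\subseteq G/K$ by finitely many translates $c_i + q(W)$, and observe that each $q^{-1}(c_i + q(W))$ equals $g_i + W + K$ for any $g_i\in q^{-1}(c_i)$ and is therefore compact; hence $q^{-1}(C)$ is a closed subset of a finite union of compact sets. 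The only point requiring care is this properness verification, but it is entirely routine once local compactness of $G$ is exploited.
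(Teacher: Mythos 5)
Your proof is correct and follows essentially the same route as the paper's: both arguments reduce the reverse implication to the fact that $X+K$ is compact whenever $K$ and $(X+K)/K$ are, and then conclude that $X$, being closed in $G$ and contained in the compact set $X+K$, is itself compact. The only difference is one of packaging: the paper cites Hewitt--Ross (5.25) (a locally compact group with a compact closed subgroup and compact quotient is compact, applied to the group $X+K$), whereas you prove the equivalent statement that the quotient map $q\colon G\to G/K$ is proper directly from local compactness; your properness verification and your net argument for the closedness of $X+K$ are both correct.
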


\begin{proof}
Certainly $X+K$ is closed and if $X$ is compact, so is $(X+K)/K$.
On the other hand $X+K$ is \lca, so if $(X+K)/K$ is compact then so is $X+K$ by
\cite[(5.25) Theorem]{hewitt-ross1-book} and therefore $X$ is compact.
\end{proof}

\begin{lemma}\label{l:phi-kappa}
Let $H$ be a closed subgroup of a \lca\ group $A$ satisfying the following
premises:
\begin{enumerate}[\rm(a)]
\item $A$ is periodic and $\phi\defi\pi(A)$ is finite.
\item $E\defi\tor(A)$ is discrete and has finite exponent, say $e$.
\item $A/E$ is \fg.
\end{enumerate}
Then, algebraically and topologically, 
$H=Z\oplus E_H$ for $Z$ \tf\ and \fg\ and $E_H=\tor(H)$.
\end{lemma}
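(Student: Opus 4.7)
The plan is to reduce to the case where $A$ is a $p$-group, and then to use the continuous $\Z_p$-module structure on any \lca\ $p$-group to construct topological sections, first splitting $A\cong\Z_p^m\oplus E$ and then $H\cong\Z_p^{n_H}\oplus E_H$.

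For the reduction, since $A$ is periodic with $\pi(A)=\phi$ finite, we have $A=\bigoplus_{p\in\phi}A_p$ topologically, and correspondingly $H=\bigoplus_pH_p$, $E=\bigoplus_pE_p$, $E_H=\bigoplus_pE_{H,p}$. Each $A_p$ satisfies the hypotheses of the lemma with $\phi$ replaced by $\{p\}$: $E_p=A_p\cap E$ is discrete of exponent dividing $e$, and $A_p/E_p$ is a direct summand of the \fg\ group $A/E$, hence itself \fg. Proving the $p$-primary case and assembling gives $H=Z\oplus E_H$ with $Z=\bigoplus_pZ_p$ \tf\ and \fg.

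In the $p$-primary case, first note that $A/E$ is \tf\ (an element $a+E$ of finite order $n$ satisfies $na\in E$, so $ena=0$, hence $a\in E$) and \fg\ compact, so Lemma \ref{l:Rb} yields $A/E\cong\Z_p^m$ for some $m$. Since $A$ is a \lca\ $p$-group, every $x\in A$ satisfies $p^ix\to 0$, so the formula $c\.x\defi\sum_{i\ge0}c_ip^ix$ (for the $p$-adic expansion $c=\sum c_ip^i\in\Z_p$) defines a continuous $\Z_p$-module structure on $A$. Picking a $\Z_p$-basis $\bar v_1,\ldots,\bar v_m$ of $A/E\cong\Z_p^m$ and lifts $v_i\in A$, the map $s\colon\Z_p^m\to A$, $s(c_1,\ldots,c_m)\defi\sum_ic_i\.v_i$, is a continuous $\Z_p$-module section of $A\to A/E$. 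Its image $Z\defi s(\Z_p^m)$ is compact and homeomorphic to $\Z_p^m$; since $E$ is discrete, $Z$ is actually clopen in $A$ and $A=Z\oplus E$ topologically.

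Turning to $H$, let $\pi_E\colon A\to E$ denote the projection from $A=Z\oplus E$. Then $H\cap Z$ is a closed submodule of $\Z_p^m$, hence $H\cap Z\cong\Z_p^{n_H}$ for some $n_H\le m$. The crux is showing $\pi_E(H)/E_H$ is finite: for $e\in\pi_E(H)$ the element $z\in Z$ with $(z,e)\in H$ is uniquely determined modulo $H\cap Z$, yielding an injection $\pi_E(H)/E_H\hookrightarrow Z/(H\cap Z)$; the target $\Z_p^m/\Z_p^{n_H}$, being a finitely generated $\Z_p$-module, splits as $\Z_p^{m-n_H}\oplus F_0$ with $F_0$ a finite $p$-group, and the bounded exponent $p^k$ of $\pi_E(H)/E_H$ forces its image into the torsion part $F_0$. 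Consequently $H/E_H$ is a finite extension of the compact open subgroup $((H\cap Z)+E_H)/E_H\cong\Z_p^{n_H}$, so $H/E_H$ is a compact \tf\ $p$-group of $p$-rank $n_H$, and Lemma \ref{l:Rb} forces $H/E_H\cong\Z_p^{n_H}$. A final $\Z_p$-module section $\Z_p^{n_H}\to H$, obtained exactly as for $A$ above and using the $\Z_p$-action that $H$ inherits from $A$, produces a closed subgroup $Z_H\cong\Z_p^{n_H}$ of $H$ with $H=Z_H\oplus E_H$ topologically. The main obstacle is the finiteness of $\pi_E(H)/E_H$, which requires both the preliminary splitting $A=Z\oplus E$ and the bounded exponent hypothesis on $E$.
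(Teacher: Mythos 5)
Your proof is correct, and its skeleton is the same as the paper's: reduce to the $p$-primary case, split off $A=Z\oplus E$ using $A/E\cong\Z_p^m$ (Lemma~\ref{l:Rb}), then show $H/\tor(H)$ is \fg\ and \tf\ and repeat the splitting inside $H$. The one place where you genuinely take a different route is the middle step. The paper maps $H/E_H$ into $A/E$, \emph{asserts} that $H/E_H$ is compact, and identifies it with its (closed, hence \fg) image in $\Z_p^m$; you instead prove the compactness, via the injection $\pi_E(H)/E_H\hookrightarrow Z/(H\cap Z)$ whose image must lie in the finite torsion part of the \fg\ $\Z_p$-module $Z/(H\cap Z)$, so that $(H\cap Z)+E_H$ is open of finite index in $H$. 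That argument is more work but supplies exactly the justification the paper's version leaves implicit (and in fact only uses that $E$ is torsion, not its bounded exponent). Your explicit $\Z_p$-module section is likewise just a concrete rendering of the paper's ``lifting topological generators,'' though it would be worth one extra line to note why $Z=s(\Z_p^m)$ is \emph{open} in $A$ (e.g.\ $u\mapsto u-s(q(u))$ is a continuous map from a compact open $U$ into the discrete group $E$).
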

\begin{proof}
We first claim that $A=X\oplus E$ for $X$ a \fg\ \tf\ subgroup of $A$.
Since $\phi$ is finite and, algebraically and topologically,
 $A=\bigoplus_{p\in\phi}A_p$ for $A_p$ the $p$-primary
subgroup of $A$, it will suffice to prove the claim under the additional
assumption that $\phi=\{p\}$ and thus $A$ is a \lca\ $p$-group.

According to (c), $A/E$ is \fg. Hence Lemma \ref{l:Rb} 
implies for some $r\ge0$ the topological isomorphism
$A/E\cong\Z_p^r$. Lifting topological
generators of $A/E$ to $A$ gives rise to a closed subgroup $X\cong\Z_p^r$
of $A$ with $A=X\oplus E$. The claim holds.

\msk

As during the proof of the claim, 
for establishing the statements about $H$,
we may assume that $H$ is a $p$-group for some prime $p$.
Certainly $E_H=\tor(H)=H\cap \tor(A)$ is discrete. 
The continuous epimorphism
$\phi:A\to A/E\cong\Z_p^r$ restricts to a map $\chi:H\to A/E$ with kernel
$E_H=\tor(H)$. The induced homomorphism 
$\bar\chi$ from the compact group $H/E_H$
to $A/E$ is continuous and renders a compact image, say $L$, in $A/E$. 
It follows
from the compactness of $H/E_H$ and $A/E$ that 
$\bar\chi:H/E_H\to L$ is an isomorphism of topological groups
 and hence $H/E_H$ must
be \fg. Therefore, replacing in the above claim $A$ by $H$, we can deduce
the existence of a \fg\ subgroup $Z$ of $H$ with $H=Z\oplus E_H$ 
algebraically and topologically.
\end{proof}

\section{Proving the Main Results}
We shall proceed in three subsections, dealing first with $p$-groups,
then with totally disconnected ones, and finally with groups having
nontrivial connected components.

\subsection{$p$-Groups}
\label{ss:Mg-p-case}
Let us first describe the structure
of an abelian \tM\ $p$-group.

\begin{theorem}[Mukhin, see \cite{muk2}]\label{t:M-abelian-p}
 A \lca\ \tM\ $p$-group $G$ satisfies one of the following conditions:
\begin{enumerate}[\rm(a)]
\item $G$ contains an open compact subgroup of finite \prank.
 Then the torsion subgroup $T=\tor(G)$ of\, $G$ is discrete and $G/T$
 has finite \prank.
\item There is an open compact subgroup $U$ of $G$ with infinite \prank.
 Then $G/U$ has finite \prank\ and $G$ 
 contains a closed subgroup $D$ of finite $p$-rank 
 with compact factor group $G/D$. In particular, $D$ can be taken to be
 $\div(G)$. 
\end{enumerate}
\end{theorem}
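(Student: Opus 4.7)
The plan is to fix a compact open subgroup $U$ of $G$ and split on whether $\rank_p(U)$ is finite or infinite. In Case~(a), when $\rank_p(U) < \infty$, Proposition~\ref{p:finrank}(3) yields $U \cong \Z_p^m \oplus F$ algebraically and topologically, with $m \ge 0$ and $F$ a finite abelian $p$-group; in particular $\tor(U) = F$. Since $U$ is open in $G$ and $\tor(G) \cap U = \tor(U) = F$ is finite, $F$ is a finite open subset of $\tor(G)$ in the subspace topology, so by translation $\tor(G)$ is discrete and hence closed in the Hausdorff group $G$. The quotient $G/\tor(G)$ is then a \tf\ \lca\ $p$-group with compact open subgroup $(U+\tor(G))/\tor(G) \cong U/F \cong \Z_p^m$ of $p$-rank $m$, and Lemma~\ref{l:GU-prank} upgrades this to $\rank_p(G/\tor(G)) = m < \infty$, settling Case~(a).

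Case~(b), where $\rank_p(U) = \infty$, is the technical core, and the \tM\ hypothesis is essential. I would establish in order: (i) $G/U$ has finite $p$-rank; (ii) $G/U$ is divisible, so that $G/U \cong \prf p^n$; (iii) compatible $p$-power root systems in $G$ lifting topological generators of $\prf p^n$ converge to a closed divisible subgroup $D$ with $G = U + D$, making $G/D$ a continuous image of the compact group $U/(U\cap D)$ and hence compact. For (i), an infinite $\Z(p)$-independent family in $G/U$ combined with the infinite compact elementary abelian quotient $U/pU$ would yield a Hausdorff subquotient of $G$ isomorphic to $\Z(p)^{(\N)} \oplus \Z(p)^\N$, contradicting Example~\ref{ex:P+S} via Proposition~\ref{p:stqh-class}. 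For (ii), an element of $G/U$ of order $\ge p^2$ combined with the infinite $p$-rank of $U$ would permit verifying the hypotheses of Lemma~\ref{l:p2p}, producing a closed subgroup of $G$ isomorphic to $L = \ZppZp$, which fails to be \tM\ by Example~\ref{ex:p-quadrat}. For (iii), divisibility of $G/U$ supplies roots modulo $U$ and the compactness of $U$ traps the successive correction terms in a nested family of compact cosets, so the lifts converge to a closed subgroup $D \cong \Q_p^a \oplus \prf p^b$ with $a + b = n$; since a compact $p$-group is reduced, every divisible subgroup of $G$ lies in $D$, whence $D = \div(G)$.

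The main obstacle is step~(iii): in an arbitrary \lca\ abelian group the maximal divisible subgroup need not be closed, so the convergence of the root lifts must be controlled using both the divisibility of $G/U$ established in~(ii) and the precise structure of the compact open subgroup $U$. By comparison, Case~(a) and the two non-\tM\ contradictions in Case~(b) are comparatively routine once the relevant non-\tM\ examples and Lemma~\ref{l:p2p} are in hand.
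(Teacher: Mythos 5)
Your Case~(a) is fine and matches the paper. The trouble is in Case~(b), where the two contradiction arguments are assigned to the wrong steps. For your step~(i), an infinite independent family in $\socle(G/U)$ does \emph{not} automatically yield a subquotient isomorphic to $\Z(p)^{(\N)}\oplus\Z(p)^\N$: the lifts of the family to $G$ may all have order $p^2$ with their $p$-th multiples lying nontrivially (indeed densely) in $U$, in which case the closed subgroup they generate together with $U$ is the local product $L=\ZppZp$ rather than a direct sum $\Z(p)^{(\N)}\oplus\Z(p)^\N$. You give no argument for why a splitting of the independent family off $U$ exists, and in general it does not. The paper's proof of finiteness of $\rank_p(G/U)$ is exactly a dichotomy on this point: after reducing to exponent $p^2$ and a metrizable $U$, either $\overline{pG\cap U}$ fails to be open, in which case a complement exists and Example~\ref{ex:P+S} applies, or it is open, in which case Claims~3--4 manufacture the nested family of $\Z(p^2)$'s required by Lemma~\ref{l:p2p} and Example~\ref{ex:p-quadrat} finishes. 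So Lemma~\ref{l:p2p} is needed for your step~(i), not for step~(ii).

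Your step~(ii) is false as stated: $G=\Z(p^2)\oplus\Z(p)^\N$ with $U=\Z(p)^\N$ is compact, hence \stqh\ and \tM, $U$ has infinite \prank, yet $G/U\cong\Z(p^2)$ is not divisible. A single element of order $\ge p^2$ in $G/U$ cannot feed Lemma~\ref{l:p2p}, which requires infinitely many independent copies of $\Z(p^2)$ interlocked with a descending basis of open subgroups; your proposed argument would ``prove'' that this compact $G$ is not \tM. The correct move, once $\rank_p(G/U)<\infty$ is known, is simply to write $G/U\cong\prf p^m\oplus F$ with $F$ finite and enlarge $U$ to the preimage of $F$. Finally, your step~(iii) (lifting compatible $p$-power roots and controlling convergence via compact cosets) is only a sketch with real issues --- nonemptiness of the solution set at each stage is not automatic --- and the paper avoids it entirely by duality: after arranging $G/U\cong\prf p^m$, the dual $\hat G$ satisfies the premise of Case~(a), so $\tor(\hat G)$ is discrete with $\hat G/\tor(\hat G)\cong\Q_p^k\oplus\Z_p^l$, and $D\defi\tor(\hat G)^\perp\cong\Q_p^k\oplus\prf p^l$ is the desired closed divisible subgroup with $G/D$ compact. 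I would recommend restructuring Case~(b) along those lines.
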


\begin{proof}
Suppose first that the premise of (a) is valid, i.e., $\rank_p(U)$ is
finite for some open compact subgroup $U$ of $G$. Then, taking 
Proposition~\ref{p:finrank} into account, we may
replace $U$ by a suitable of its open 
subgroups and achieve that $U$ is \tf.
Therefore $\tor(G)$ must be a discrete subgroup. 
As the \tf\ group $G/\tor(G)$ contains 
the open subgroup $(U+\tor(G))/\tor(G)$
and the latter has finite \prank, Lemma \ref{l:GU-prank} implies that
$G/T=G/\tor(G)$ has finite \prank. 

\msk

Let us assume the premise of (b) now. 
Suppose, by way of contradiction, the \prank\ of $G/U$ to be infinite.
We shall derive a contradiction from this by showing that a local
product isomorphic to the one in Eq.~(\ref{eq:p2p})
can be manufactured to be a factor group of a closed subgroup of $G$,
being \tM\ by Remark \ref{rem:tMg}, 
and then refer to Example \ref{ex:p-quadrat}.

\msk

{\em Claim 1: One can assume $U$ to have exponent $p$.}

\msk

With $G$ also $G/pU$ is \tM. Consider instead of $G$ and $U$
the factor group $G/pU$ and its open compact subgroup $U/pU$
which still has infinite \prank.

\msk

{\em Claim 2: One can assume $U$ to be first countable and hence metric. 
 Furthermore, one can arrange $G/U\cong\Z(p)^{(\N)}$ and $U\cong\Z(p)^\N$.

Moreover, every open subgroup $V$ of $U$ is isomorphic to $U$. In particular, $V$ is metrizable, has infinite \prank, and, has exponent $p$.}

\msk

There is a strictly decreasing sequence $(V_k)_{k\ge1}$ of open
subgroups of $U$. Letting $V\defi\bigcap_{k\ge1}V_k$
we pass from $(G,U)$ to $(G/V,U/V)$. Then $G/V$ is \tM\ and
$U/V$ is first countable and infinite of exponent $p$. 
Therefore $U/V$ has infinite \prank\
and is thus isomorphic to $\Z(p)^{\N}$. 
By replacing $G$ with the inverse image of $\socle(G/U)$ under
projection $G\to G/U$ we achieve $G/U\cong\Z(p)^{(\N)}$. 

The ``moreover'' statement follows from $U\cong\Z(p)^{(\N)}$
and \cite[Theorem 4.3.8]{ribes-zalesskii}.

\msk

{\em Claim 3: One can assume that $\overline{pG\cap U}$ is
open in $G$.  Moreover, for any open subgroup $V$ of $U$ the 
intersection $pG\cap V\neq\{0\}$.} 

\msk

If $T\defi\overline{pG\cap U}$ is {\em not} open in $G$ it must have
infinite index in $U$. By Claim 2 the group $G$ has exponent $p^2$
and $pG\le U$. Therefore the factor group $G/T$ 
has exponent $p$ and, as $G/U$ is infinite, so is
$(G/T)/(U/T)$. Since $G/T$ may be considered a GF$(p)$-vector space
the open subgroup $U/T$ admits a complement, say $S$.
Thus algebraically and topologically
\[G/T\cong S\oplus U/T.\]
Select in $G/T$ a countable subgroup $\Sigma_p$ of $S$ 
isomorphic to $\cong\Z(p)^{(\N)}$. 
Since $U/T$ is a compact group of exponent $p$ (by Claim 1) and
metrizable (by Claim 2), it follows that it
is topologically isomorphic to $\Z(p)^\N$. Hence it turns out 
that $\Sigma_p\oplus U/T$ is a factor group of a closed subgroup of $G$
and therefore is \tM\ by Remark \ref{rem:tMg}.
This contradicts the finding in Example \ref{ex:P+S}.

\msk

For proving the second statement, suppose, by way of contradiction
that $pG\cap V=\{0\}$ for some open subgroup $V$ of $U$.
The (purely algebraic) isomorphism
\[pG\cong pG/(pG\cap V)\cong (pG+V)/V\]
and the fact that $V$ and $pG+V$ are open subgroups of $U$ implies
that $pG$ must be finite. But then the open subgroup 
$\overline{pG\cap U}$ would be finite and $G$ would be discrete, a
contradiction.

\msk

{\em Claim 4: There is a sequence $(F_k,V_k)_{k\ge1}$ 
of pairs of compact subgroups of $G$ where
for all $k\ge1$
\begin{enumerate}[(1)]
\item $F_k$ is finite, $V_k$ is open, and, $F_k\cap V_{k+1}=\{0\}$; and
\item $F_k\cap V_k=\gp{px_k}$ for some nontrivial element $x_k\in G$; and
\item $F_k\subseteq F_{k+1}$ and $V_{k+1}\subseteq V_k$
and $\bigcap_{k\ge1}V_k=\{0\}$.
\end{enumerate}
}

\msk

We proceed by induction on $k$ and recall that for all $k\ge1$
the subgroups $V_k$ of $U$ will be separable of infinite \prank, and,
have exponent $p$ (see Claim 2).
For $k=1$ let $V_1$ be the open subgroup $\overline{pG\cap U}$ (see Claim 3).
Then there exists $x_1\in G$ of order $p^2$ with $px_1\in V_1$ and
we set $F_1\defi\gp{x_1}$.

Suppose $(F_i,V_i)$ have been found for $1\le i\le k$.
Then $F_k$ is finite and hence there exists an open subgroup $V_{k+1}$
contained in $V_k$ with $F_k\cap V_{k+1}=\{0\}$.  
By Claim 3 the intersection $pG\cap V_{k+1}\neq\{0\}$. 
Therefore one can find
$x_{k+1}\in G$ of order $p^2$ with $px_{k+1}\in V_{k+1}$.
Set $F_{k+1}\defi F_k\oplus \gp{x_{k+1}}$. 

(1) and the first statement of (3) 
are now clear from the construction.
For proving (2) suppose $x\in F_{k+1}\cap V_{k+1}$.
Then $x=f_k+\lambda x_{k+1}$ for some $f_k\in F_k$ and $0\le\lambda
\le p^2-1$.  
Since $x\in V_{k+1}$ we must have $0=px=pf_k+p\lambda x_{k+1}$
and $pf_k=-\lambda px_{k+1}\in F_k\cap V_{k+1}=\{0\}$ (by the construction
of $V_{k+1}$).
Hence $\lambda=p\mu$ for some $0\le\mu\le p-1$.  
This implies $x=\lambda x_{k+1}=\mu px_{k+1}\in\gp{px_{k+1}}$.

Selecting at each step $V_i$ small enough 
one can achieve the second statement in (3), namely
$\bigcap_{i\ge1}V_i=\{0\}$.

\msk

Setting in Claim 4 for all $k\in\N$ respectively $Z_k\defi\gp{x_k}$
and $F\defi\gp{Z_k:k\ge1}$ shows that the assumptions of
Lemma \ref{l:p2p} hold. Therefore there is a closed
subgroup $L$ of $G$ topologically and algebraically isomorphic
to the group in Eq.~(\ref{eq:p2p}).
We have reached a contradiction and therefore the \prank\ of
$G/U$ must be finite.

\msk

For proving the remaining assertions of (b)
let us return to the original meaning of $G$ and its open compact
subgroup $U$ of infinite \prank. By what we just proved,
the factor group $G/U$ has finite \prank.
Thus, according to Proposition \ref{p:finrank},
$G/U\cong \prf p^m\oplus F$, for some $m\ge0$ and finite group $F$.
Replacing $U$ by the preimage of $F$ in $G$ allows to have $F=\{0\}$.
Lemma \ref{l:Rc} implies that $\hat G$ is \tM\ and 
duality theory applied to the short exact sequence $U\to G\to\prf p^m$ 
implies that $\hat G$ must contain an open compact subgroup $\cong\Z_p^m$.

Hence $\hat G$ satisfies the premise of (a) and therefore $\tor(\hat G)$
is a discrete subgroup of $\hat G$ with $\hat G/\tor(\hat G)$
being \tf\ of finite \prank. 
Therefore Proposition \ref{p:finrank}(5) implies the existence
of $k\ge0$ and $l\ge0$ such that
\[\hat G/\tor(\hat G)\cong \Q_p^k\oplus \Z_p^l.\]
Duality applies and yields topological isomorphisms 
\[D\defi \tor(\hat G)^\perp\cong (\hat G/\tor(\hat G))^{\hat{\phantom{m}}}
\cong \Q_p^k\oplus \prf p^l.\]
Hence $D$ is a closed divisible subgroup  of $G$ 
having finite \prank, with compact factor group $G/D$.
\end{proof}

\begin{lemma}\label{l:U-fg-stqh}
Let a \lca\ $p$-group $G$ contain a \fg\ open subgroup $U$.
Then $G$ is \stqh.
\end{lemma}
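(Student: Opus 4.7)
The plan is to reduce via Lemma~\ref{l:XYU} to a situation in which both closed subgroups meet the open compact subgroup trivially, and then exploit that $U$, being finitely generated and compact, is topologically isomorphic to $\Z_p^m\oplus F$ with $F$ finite by Lemma~\ref{l:Rb}, so that the torsion part of $U$ (and of every quotient of $U$) is finite. Concretely, for closed subgroups $X,Y\le G$ I would set $X_U\defi X\cap U$, $Y_U\defi Y\cap U$, and $K\defi X_U+Y_U$. Since every closed subgroup of $U$ is again finitely generated (by \cite[Proposition~4.3.6]{ribes-zalesskii}) and hence compact via Lemma~\ref{l:Rb}, $K$ is compact. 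Lemma~\ref{l:XYU} then reduces the closedness of $X+Y$ in $G$ to the closedness of $\til X/K+\til Y/K$ in $G/K$, where $\til X\defi X+Y_U$ and $\til Y\defi Y+X_U$; the same lemma also supplies $\til X\cap U=\til Y\cap U=K$.

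Passing to $G'\defi G/K$, $U'\defi U/K$, $X'\defi\til X/K$ and $Y'\defi\til Y/K$, the conditions $X'\cap U'=Y'\cap U'=0$ force both $X'$ and $Y'$ to be discrete subgroups of $G'$. Letting $\pi\colon G'\to G'/U'=G/U$ denote the quotient map, $\pi$ restricts to group isomorphisms $\pi|_{X'}\colon X'\to\bar X$ and $\pi|_{Y'}\colon Y'\to\bar Y$ onto subgroups of the discrete torsion abelian $p$-group $G/U$. For each $w\in\bar X\cap\bar Y$ let $x_w\in X'$ and $y_{-w}\in Y'$ be the unique preimages of $w$ and $-w$ under these isomorphisms; the assignment $\sigma(w)\defi x_w+y_{-w}$ then defines a group homomorphism $\sigma\colon\bar X\cap\bar Y\to U'$, with values in $U'$ because $\pi(\sigma(w))=w+(-w)=0$.

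The crux of the argument is that $U'$, being a quotient of the finitely generated $U$, is itself finitely generated, so $U'\cong\Z_p^{m'}\oplus F'$ for some $m'\ge0$ and a finite group $F'$; since $\bar X\cap\bar Y$ is torsion and $\Z_p^{m'}$ is torsion-free, the image $\sigma(\bar X\cap\bar Y)$ is contained in $F'$ and is therefore \emph{finite} (hence closed in $U'$). To conclude, I would take a convergent net $x_\alpha+y_\alpha\to z$ in $G'$ with $x_\alpha\in X'$, $y_\alpha\in Y'$. Discreteness of $G/U$ makes $\bar x_\alpha+\bar y_\alpha$ eventually constant; after fixing one element $(x_0,y_0)$ realising that value, the identities $x_\alpha=x_0+x_{w_\alpha}$ and $y_\alpha=y_0+y_{-w_\alpha}$ for $w_\alpha\defi\bar x_\alpha-\bar x_0\in\bar X\cap\bar Y$ yield $x_\alpha+y_\alpha=x_0+y_0+\sigma(w_\alpha)$, and hence $\sigma(w_\alpha)\to z-x_0-y_0$. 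Finiteness of $\sigma(\bar X\cap\bar Y)$ then produces some $w\in\bar X\cap\bar Y$ with $z-x_0-y_0=\sigma(w)$, so $z=(x_0+x_w)+(y_0+y_{-w})\in X'+Y'$. The main delicacy will be the bookkeeping for the lifts $x_w$, $y_{-w}$ and the verification that $\sigma$ is a well-defined group homomorphism; once that is set up, the finiteness of $\sigma(\bar X\cap\bar Y)$ delivers the closedness almost automatically.
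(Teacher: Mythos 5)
Your proof is correct, and while it opens exactly as the paper does --- invoking Lemma \ref{l:XYU} with $U_X=X\cap U$, $U_Y=Y\cap U$ to reduce to closed subgroups meeting the compact open $U$ trivially --- it finishes by a genuinely different route. The paper first shrinks $U$ to a torsion-free finitely generated open subgroup (via Lemma \ref{l:Rb}/Proposition \ref{p:finrank}), so that $\tor(G)$ becomes a \emph{discrete} subgroup; after the reduction, $X$ and $Y$ are discrete, hence torsion (discrete subgroups of a periodic group are torsion), so $X+Y$ sits inside the discrete subgroup $\tor(G)$ of the quotient and is therefore closed --- a two-line conclusion, at the price of re-checking that the quotient group still satisfies the hypotheses so that its torsion subgroup is again discrete. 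You instead keep $U'$ as it is, use only that $U'\cong\Z_p^{m'}\oplus F'$ has finite torsion, and run an explicit net argument: the ``difference homomorphism'' $\sigma\colon\bar X\cap\bar Y\to U'$ lands in the finite group $F'$ because its domain is torsion and $\Z_p^{m'}$ is torsion-free, and a convergent net $x_\alpha+y_\alpha\to z$ is, modulo the eventually constant image in the discrete $G/U$, a net in the finite closed set $x_0+y_0+\sigma(\bar X\cap\bar Y)$, forcing $z\in X'+Y'$. All the bookkeeping you flag (well-definedness of $x_w$, $y_{-w}$ via injectivity of $\pi$ on $X'$ and $Y'$, and that $\sigma$ is a homomorphism) checks out. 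What your version buys is self-containedness: it never needs the global statement that $\tor(G)$ is discrete, nor the detour through Theorem \ref{t:M-abelian-p}(a); what it costs is the extra cocycle-style machinery where the paper gets away with ``$X+Y$ lies in a discrete subgroup.''
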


\begin{proof} Since $G$ is periodic the \fg\ subgroup
$U$ is compact. 
Making use of Proposition \ref{p:finrank} 
and Theorem \ref{t:M-abelian-p}(a) 
we can pass to a smaller open subgroup of $U$ and achieve $U$
to be \tf. Moreover, since the smaller subgroup is open in
the \fg\ subgroup $U$ 
it is itself \fg\ (see e.g. \cite[Proposition 2.5.5]{ribes-zalesskii}). 
Then $\tor(G)$ turns out to be a discrete, hence closed
subgroup of $G$.
Let $X$ and $Y$ be closed subgroups of $G$. 
Since $U$ is compact and open, we can pass to a factor group of $G$
which still satisfies the premises of the lemma, and thereby modifying 
$X$ and $Y$ by making
 use of 
Lemma \ref{l:XYU} and achieve that $X$ and $Y$ are both discrete
subgroups of the modified group $G$. Therefore $X+Y$ is contained
in the discrete subgroup $\tor(G)$ and is thus closed in $G$.
\end{proof}

\begin{theorem}\label{th:tM=stqh p}
The following statements about a \lca\ $p$-group $G$ are equivalent:
\begin{enumerate}[(a)]
\item $G$ is \tM.
\item $G$ is \stqh.
\end{enumerate}
\end{theorem}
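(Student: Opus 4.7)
Since the direction (b) $\Rightarrow$ (a) is already supplied by Lemma \ref{l:stqh<tM}, only the converse requires work. My plan is to split into two cases according to Theorem \ref{t:M-abelian-p}: whether an open compact subgroup $U$ of $G$ has finite or infinite \prank.

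If $U$ has finite \prank, Proposition \ref{p:finrank} allows me to shrink $U$ to a compact open subgroup isomorphic to $\Z_p^m \oplus F$, which is \fg\ by Lemma \ref{l:Rb}. Lemma \ref{l:U-fg-stqh} then immediately delivers \stqh. If $U$ has infinite \prank, I turn to Pontryagin duality. By Remark \ref{rem:tMg} and Lemma \ref{l:Rc}, $\hat G$ is again a \tM\ $p$-group, and the analysis in the last portion of the proof of Theorem \ref{t:M-abelian-p}(b) furnishes an open compact subgroup of $\hat G$ isomorphic to $\Z_p^m$, placing $\hat G$ in the finite \prank\ case. Consequently $\hat G$ is \stqh\ by the preceding step, and it remains only to transfer this property back to $G$ by invoking the classical \lca\ duality: for closed subgroups $X, Y \leq G$, the sum $X + Y$ is closed in $G$ if and only if $X^\perp + Y^\perp$ is closed in $\hat G$.

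The principal difficulty I anticipate is this last transfer step. The annihilator mechanism yields a lattice anti-isomorphism between closed subgroups of $G$ and of $\hat G$, but closedness of $X+Y$ does not translate automatically into closedness of $X^\perp + Y^\perp$ under this correspondence; one must supply the duality identity cited above separately. A possible direct alternative would be to exploit that in the infinite \prank\ case the divisible part $D = \div(G)$ surjects onto $G/U$ (since $G/U$ is divisible of finite rank), giving $G = D + U$; since every compact divisible $p$-group is trivial, $D \cap U = \{0\}$, whence $G \cong D \oplus U$ topologically. Then $D$ is \stqh\ by the finite \prank\ case and $U$ is \stqh\ by compactness, but Example \ref{ex:pK=K} warns that direct sums of \stqhg s need not be \stqh, and a substantive use of the \tM\ hypothesis to exclude bad diagonal-type subgroups would still be required.
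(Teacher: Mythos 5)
Your direction (b)$\Rightarrow$(a) and your treatment of the finite \prank\ case coincide with the paper's argument: Lemma \ref{l:stqh<tM} and Lemma \ref{l:U-fg-stqh} do exactly what you say. The gap is in the infinite \prank\ case. The dualization itself is sound (the proof of Theorem \ref{t:M-abelian-p}(b) does produce a compact open subgroup of $\hat G$ isomorphic to $\Z_p^m$, so $\hat G$ is \stqh\ by Lemma \ref{l:U-fg-stqh}), but the entire transfer back to $G$ rests on the equivalence ``$X+Y$ closed in $G$ if and only if $X^\perp+Y^\perp$ closed in $\hat G$,'' which you correctly observe does not follow from the annihilator mechanism and which you then leave unproved. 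This is not a routine citation: it is a nontrivial assertion about \lca\ groups (already the special case ``$X+Y$ closed and $X\cap Y=\{0\}$ implies $X+Y$ is a \emph{topological} direct sum,'' needed to split characters, requires an open-mapping argument), it is established nowhere in the paper, and it is precisely the kind of statement the paper is in the business of proving, so invoking it begs a substantial part of the question. Your fallback sketch is also incomplete, as you concede; note in addition that $G/U\cong\prf p^m\oplus F$ need not be divisible, so $\div(G)$ surjecting onto $G/U$ is not automatic.

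The paper closes the infinite \prank\ case with a short direct argument that avoids duality entirely, and which you could adopt: given closed subgroups $X$ and $Y$, apply Lemma \ref{l:XYU} with $K=(X\cap U)+(Y\cap U)$ and pass to $G/K$ so that $X\cap U=Y\cap U=\{0\}$; then $X$ and $Y$ are discrete and embed abstractly into $G/U$, which has finite \prank\ by Theorem \ref{t:M-abelian-p}(b). Hence $X$, $Y$, and the abstract torsion $p$-group $X+Y$ have finite \prank, so $\socle(X+Y)$ is finite; shrinking $U$ (its \prank\ stays infinite) so that $U\cap\socle(X+Y)=\{0\}$ forces $U\cap(X+Y)=\{0\}$, whence $X+Y$ is discrete and therefore closed. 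As written, your proposal does not yet prove (a)$\Rightarrow$(b); to rescue the duality route you would first have to supply a proof of the closedness-transfer identity.
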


\begin{proof}
In light of Lemma \ref{l:stqh<tM}, we only need to show that (b)
is a consequence of (a).

Thus assume (a) and let $U$ be an open compact subgroup of $G$.
If $\rank_p(U)$ is finite then (b) is a consequence of Lemma \ref{l:U-fg-stqh}.
We assume from now on that the \prank\ of $U$ is infinite.

Let $X$ and $Y$ be closed subgroups of $G$. 
Making use of Lemma \ref{l:XYU} with $K\defi X\cap U+Y\cap U$
by modifying $X$ and $Y$ and replacing $G$ by $G/K$
(which is still \tM\ by Proposition \ref{p:stqh-class})
we  can arrange $X\cap U=Y\cap U=\{0\}$.
If $\rank_p(U)$ is finite then $G$ is \stqh\ by
Lemma \ref{l:U-fg-stqh}. 
Otherwise the \prank\ of $U$ is infinite and therefore
Theorem \ref{t:M-abelian-p}(b) implies that 
the discrete group $G/U$ is finite.
Moreover, the embeddings $X\to G/U$ and 
$Y\to G/U$ are embeddings of discrete
groups. Hence the \prank s of $X$ and $Y$ are finite and therefore
the \prank\ of $X+Y$ is finite (considered without topology).
Thus $X+Y$ has finite socle. 
Making our open compact subgroup $U$ small enough and observing
that the \prank\ of $U$ is then still infinite 
allows us to assume that $\socle(X+Y)$ and 
hence $X+Y$ intersect $U$ trivially.
Therefore $X+Y$ is a discrete subgroup of $G$ 
and hence is closed. 
Thus (b) holds.
\end{proof}

\begin{lemma}\label{l:prf} Let $G$ be a \lca\ $p$-group containing
a subgroup $D$ which algebraically is isomorphic to $\prf p^k$ for
some $k\ge1$. Then $D$ is a discrete and hence closed subgroup of $G$.
\end{lemma}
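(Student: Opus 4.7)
The plan is to use the fact that the $p$-socle of $D$,
\[S \defi \socle_p(D)=\{x\in D : px=0\},\]
is algebraically isomorphic to $\socle_p(\prf p^k)\cong \Z(p)^k$, hence a \emph{finite} group of order $p^k$. Combined with the total disconnectedness of a \lca\ $p$-group, this will allow me to produce an open neighborhood of $0$ in $G$ that is disjoint from $D\setminus\{0\}$.

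First I would recall that, in the paper's terminology, a \lca\ $p$-group is in particular periodic, hence totally disconnected. By van Dantzig's theorem, the open subgroups of $G$ form a basis of neighborhoods of $0$. Since $G$ is Hausdorff and $S$ has only $p^k-1$ nonzero elements, for each such element $s$ I can find an open subgroup of $G$ not containing $s$; intersecting these finitely many open subgroups yields an open subgroup $V$ of $G$ with $V\cap S=\{0\}$.

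Next I claim $V\cap D=\{0\}$, which shows that $D$ is discrete in $G$. Indeed, suppose toward contradiction that $0\ne x\in V\cap D$. Since $D$ is a torsion group, $x$ has order $p^n$ for some $n\ge 1$, so $p^{n-1}x$ is a nonzero element of $S$. Because $V$ is a subgroup one has $p^{n-1}x\in V$, giving $p^{n-1}x\in V\cap S=\{0\}$, a contradiction. Therefore $D$ is a discrete subgroup of $G$, and every discrete subgroup of a Hausdorff topological group is automatically closed.

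The argument is short and purely elementary, so no real obstacle arises. The only point that requires a moment's care is the existence of an open subgroup $V$ with $V\cap S=\{0\}$, which rests on the total disconnectedness of $G$; this in turn is guaranteed by the paper's convention that a \lca\ $p$-group is periodic.
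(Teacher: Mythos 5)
Your proof is correct, and it takes a genuinely different route from the one in the paper. The paper reduces to the case $G=\overline D$, picks a compact open subgroup $U$, and uses the classification of subgroups of $\prf p^k$ (every subgroup is $\prf p^l\oplus F$ with $F$ finite) together with the fact that a compact $p$-group is reduced to conclude that $D\cap U$ is finite; density of $D\cap U$ in the open subgroup $U$ then forces $U$, and hence $G$, to be discrete. You instead exploit only that $\socle_p(D)\cong\Z(p)^k$ is finite: Hausdorffness plus van Dantzig give an open subgroup $V$ with $V\cap\socle_p(D)=\{0\}$, and the observation that any nonzero $x\in V\cap D$ of order $p^n$ would place the nonzero socle element $p^{n-1}x$ inside $V$ finishes the argument. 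Your version is more elementary (no structure theory of divisible groups, no appeal to reducedness of compact $p$-groups) and actually proves a slightly stronger statement: any $p$-torsion subgroup with finite $p$-socle of a totally disconnected Hausdorff \lca\ group is discrete. The paper's version stays closer to the structural toolkit ($\prf p^l\oplus F$ decompositions, reduced versus divisible) that it uses elsewhere. Both arguments rest on the same background facts — that a \lca\ $p$-group is periodic, hence totally disconnected with a basis of (compact) open subgroups, and that a discrete subgroup of a Hausdorff group is closed — and you have flagged these correctly.
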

\begin{proof}
There is no loss of generality to assume $G=\overline D$. Let $U$ be
a compact open subgroup of $G$. We claim that $D\cap U$ must have finite
exponent. In order to see this we remark that $D\cap U$, as an abstract
group, has finite \prank\ at most $r$ and thus, algebraically
\[D\cap U\cong\prf p^l\oplus F\]
for some $0\le l\le r$ and finite $F$. Since $U$ is a compact $p$-group it is
reduced and cannot contain a divisible subgroup. Hence $l=0$ and hence
$D\cap U$ is finite. Since $D\cap U=F$ is dense in $U$ we may
conclude that $U$
itself is finite and thus $D$ must be a discrete subgroup of $G$.
\end{proof}

We are ready for proving the first main result.

\begin{proof}[Proof of Theorem \ref{t:mainA}]
Let (a) be true. Then (b) follows from Theorem \ref{t:M-abelian-p}.

\msk

Assume (b.1). This condition holds for closed subgroups and factor groups.
Let $X$ and $Y$ be any closed subgroups. Then, letting $U_X$ and $U_Y$
be open compact subgroups of respectively $X$ and $Y$,  and taking
Lemma \ref{l:XYU} into account, one may  pass to 
the factor group $H\defi G/K$ and hence assume that $X$ and $Y$ 
intersect $U$ trivially. Hence $X$ and $Y$ can be assumed to be discrete
and are therefore torsion subgroups of $H$. 
Since $\tor(H)$ is discrete conclude that 
$X+Y$ is discrete and hence closed.

\ssk

Assume (b.2). 
Let $X$ and $Y$ be any closed subgroups and, similarly as before, 
setting $U_X\defi U\cap X$ and $U_Y\defi U\cap Y$, and, using
Lemma \ref{l:XYU}, replace $X$ and
$Y$ respectively by $X+U_Y$ and $Y+U_X$, and,
factor $U_X+U_Y$ in $G$, we can achieve that $X$ and $Y$ can
be assumed to be discrete subgroups of $G$. 
If $U/(U_X+U_Y)$ has finite \prank, we may
apply the reasoning of case (b.1). Let us assume now that after factoring
$U_X+U_Y$ that $U/(U_X+U_Y)$ still has infinite \prank. Simplifying notation
we let $G$ and $U$ and $X$ and $Y$ denote the respective factor groups.
Since $(X+U)/U\cong X/(X\cap U)\cong X$ is discrete and has finite \prank\ by
condition (b.2) and a similar statement holds for $Y$, we can assume
\[ X=D_X+F_X, \ \ Y=D_Y+F_Y\]
for finite groups $F_X$ and $F_Y$ and finite \prank\ torsion divisible 
subgroups $D_X$ and $D_Y$ of $G$. Since
\[ X+Y=(D_X+D_Y)+(F_X+F_Y)\]
it will suffice to prove
that $\Delta\defi D_X+D_Y$ is closed and to note
that adding the finite summand $F_X+F_Y$ renders again a closed subgroup
of $G$. Since $\Delta$, as an abstract group,
is isomorphic to $\prf p^k$ for some $k\ge1$ it follows from Lemma \ref{l:prf}
that $\Delta$ and hence $X+Y$ is closed.

\ssk

Thus (b) implies (c).

\msk
 
That (c) implies (a) has been shown in Theorem \ref{th:tM=stqh p}.
\end{proof}

More can be said if $G$ is torsion.

\begin{corollary}\label{c:M-abelian-p}
Let $G$ be a \lca\ nondiscrete torsion $p$-group. 
Then $G$ is \tM\ if, and only if,
the maximal divisible subgroup $D\defi\div(G)$ is discrete and 
has finite \prank\ and
there is a compact open, and hence reduced, subgroup $R$ such that 
\[G=R\oplus D\]
algebraically and topologically. 
\end{corollary}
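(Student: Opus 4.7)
The plan is to deduce both directions from Theorem~\ref{t:mainA}.

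For the forward direction, I would invoke Theorem~\ref{t:mainA}: since $G=\tor(G)$ is nondiscrete, case (b.1) is excluded and we are in case (b.2), giving $D\defi\div(G)$ closed of finite \prank\ and $G/D$ compact. As a divisible torsion abelian $p$-group of finite \prank, $D$ is algebraically isomorphic to $\prf p^k$ for some $k\ge 0$, and Lemma~\ref{l:prf} shows $D$ is discrete in $G$.

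To produce the topological complement, I would first select a compact open subgroup $U$ of $G$ and shrink it, using that $U$ is profinite and that $U\cap D$ is finite (closed and discrete in the compact group $U$), to arrange $U\cap D=\{0\}$. In the discrete quotient $G/U$, the subgroup $(U+D)/U\cong D$ is divisible and hence an algebraic direct summand; pick an algebraic complement $H/U$. The preimage $H\supseteq U$ is an open subgroup of $G$ with $H\cap D=\{0\}$ and $H+D=G$, and the natural map $H\times D\to G$ is a topological isomorphism because $H$ is open in $G$ and the induced isomorphism $D\to G/H$ is automatically bicontinuous between discrete groups. Setting $R\defi H$ then yields $G=R\oplus D$ topologically; moreover $R\cong G/D$ is compact, and $R$ is reduced since $G/D$ has finite exponent by Proposition~\ref{p:cpt-tg}, precluding any nontrivial divisible subgroup.

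For the converse, I would verify condition (b.2) of Theorem~\ref{t:mainA} directly. Since $G=R\oplus D$ is torsion and nondiscrete, $R$ is infinite torsion, so Lemma~\ref{l:Rb} forces $\rank_p(R)=\infty$. Moreover $R$ has finite exponent by Proposition~\ref{p:cpt-tg} and is therefore reduced, so $\div(G)=\div(R)\oplus\div(D)=D$ is closed of finite \prank, $G/R\cong D$ has finite \prank, and $G/\div(G)=R$ is compact. The hypotheses of (b.2) are thus satisfied with $U=R$, so $G$ is \tM.

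The main obstacle is the construction of the topological complement in the forward direction. Although $D$ is algebraically injective and therefore always splits off as an abstract group, ensuring that the complement is closed is not automatic; the device of shrinking $U$ to miss $D$ and then algebraically splitting inside the discrete quotient $G/U$ is the natural way to promote the algebraic splitting to a topological one.
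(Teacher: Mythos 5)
Your proof is correct and follows essentially the same route as the paper's: both arguments rule out the finite-\prank\ case because a compact open torsion subgroup of finite \prank\ would be finite (forcing $G$ discrete), both conclude from Theorem~\ref{t:M-abelian-p}(b) --- equivalently Theorem~\ref{t:mainA}(b.2) --- that $D$ is divisible of finite \prank\ and hence discrete, and both shrink the compact open subgroup $U$ to arrange $U\cap D=\{0\}$ before splitting. The one genuine difference lies in how the complement is produced: the paper invokes a theorem of Baer (via \cite[Theorem 22.2]{fuchs1}) to obtain a reduced complement $R$ containing $U$, whereas you split off the divisible summand $(U+D)/U$ algebraically inside the discrete quotient $G/U$ and pull the complement back; your version is more self-contained and makes explicit the step that an \emph{open} algebraic complement automatically yields a topological splitting, which the paper leaves implicit. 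You also spell out the converse direction by verifying condition (b.2) of Theorem~\ref{t:mainA} with $U=R$ (noting $\rank_p(R)=\infty$ via Lemma~\ref{l:Rb}), a direction the paper's printed proof does not address at all. Both arguments are sound.
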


\begin{proof}
We discuss the cases (a) and (b) in Theorem \ref{t:M-abelian-p}.
If the premise in (a) holds then, since $U$ is a compact torsion group
having finite exponent and finite \prank, it is finite. Then 
$G$ would have to be discrete, contrary to our assumptions.
Thus $G$ satisfies premise (b) in Theorem \ref{t:M-abelian-p}. 
Therefore the \prank\ of $G/U$ is finite
and $D$ is a finite \prank\ divisible subgroup -- hence $D$ is a 
discrete subgroup of $G$. Passing to a smaller open
compact subgroup of $U$ if necessary,
\cite[Proposition 2.5.5]{ribes-zalesskii}
ensures finite generation, and, 
taking Proposition \ref{p:finrank}
into account, we can in addition assume $D\cap U=\{0\}$. 
Using a result of R.~Baer (see \cite[Theorem 22.2]{fuchs1}), one can
find a reduced subgroup $R$ of $G$ containing the reduced subgroup
$U$ with $G=D\oplus R$. Necessarily $R$ is open in $G$ and $G=D\oplus R$
algebraically and topologically. 
Hence $R$ is itself
compact. 

Since $R$ is open and $D\cap R=\{0\}$ we infer that
$D$ is discrete.
\end{proof}

\subsection{Totally Disconnected LCA-Groups}
We treat the case when $G$ is periodic first and later turn to 
totally disconnected but not periodic $G$.

\begin{proof}[Proof of Theorem \ref{t:abelian-tor-stqh}]
(a)\implies(b).
Assume first that $A$ is \tM.  
Select an open compact subgroup, say $U$, of $A$.
Since $A$ is torsion, $U$ is a compact abelian torsion group,
and therefore the set $\phi\defi \pi(U)$ must be finite.
Put $\delta\defi\pi(A)\setminus\phi$. 
From $A_\delta\cap U=\{0\}$ it follows that $A_\delta$ is a discrete
subgroup of $A$, proving (b.2). 

Next observe that 
\[A_\phi=\bigoplus_{p\in\phi}A_p\]
is a direct sum since $\phi$ is finite. 
Corollary \ref{c:M-abelian-p} 
(in conjunction with Theorem \ref{t:M-abelian-p})
 implies that for each $p$ in $\phi$ there
is a decomposition
\[A_p=D_p\oplus V_p,\]
with $D_p$ a divisible finite $p$-rank subgroup of $A$ and $V_p$ compact.
Thus there is a decomposition
\[A_\phi=D_\phi\oplus V_\phi\]
where $D_\phi\defi\bigoplus_{p\in\phi}D_p$ 
is a discrete divisible subgroup, 
and,
$V_\phi\defi\bigoplus_{p\in\phi}V_p$ is compact.
Thus also (b.1) is established.

\msk

(b)\implies(c). Assume (b) to hold.
Apply Lemma \ref{l:coprime}
to $A_\delta$ and the finitely many factors $A_p$ for $p\in\phi$.
Then $A_\delta$ being discrete, is \stqh\ 
and by Corollary \ref{c:M-abelian-p} (in conjunction with Theorem 
\ref{t:M-abelian-p}) 
so is $A_p$ for every $p$ in $\phi$.

\msk 

(c)\implies(a). 
Assume (c) to hold. 
Then, using Lemma \ref{l:stqh<tM}, (a) follows.
\end{proof}
\begin{proof}[Proof of Theorem \ref{t:M-periodic}].
If $G$ is \tM, then, for every $p\in\pi(G)$, the $p$-component
is a factor group of $G$ and hence, taking Remark \ref{rem:tMg}
into account, is \tM.

Let us sketch how to prove the converse, for more details see 
\cite{muk2}. 
For any closed subgroup $H$ of $G=\locprod_{p\in\pi}(G_p,C_p)$ one has
\[H=\gen{H_p:p\in\pi}=\locprod_{p\in\pi}(H_p,H_p\cap C_p).\]
It then follows that
\[X\vee Y=\gen{X_p\vee Y_p:p\in\pi} \ \ {\rm and} \ \
  X\wedge Y=\gen{X_p\wedge Y_p:p\in\pi}.\]
Using these equalities and the fact that $X\le Z$ if, and only if,
$X_p\le Z_p$ holds for all $p\in\pi$, one derives from
\[\forall p\in\pi: \ \ (X_p\vee Y_p)\wedge Z_p=X_p\vee(Y_p\wedge Z_p)\]
that $G$ is \tM.
\end{proof}

\begin{proof}[Proof of Theorem \ref{t:mainB}]
Remark first that $G$ is totally disconnected and \lca. Therefore
a compact open subgroup $U$ exists. Since $G$ is neither discrete
nor periodic $U$ is infinite.

(a)\implies(b): 

\ssk

(b.1)
Since $G$ is not periodic $\comp(G)$ is a proper subgroup of $G$.
Therefore Lemma \ref{l:not-tM}(b) shows that $\tor(G)=\comp(G)$.
By Lemma \ref{l:not-tM}(c),
the $\Z$-rank of $G/U$ is finite. Since $U\le \comp(G)$ both subgroups
are open and hence $G/U\to G/\comp(G)$ 
is a quotient map of discrete groups.
This implies that the $\Z$-rank of $G/\tor(G)$ 
does not exceed the $\Z$-rank
of $G/U$ and is therefore finite. 

\ssk

(b.2) 

Since, by (b.1), $T=\tor(G)$ is a \tM\ torsion group it follows from
Theorem \ref{t:abelian-tor-stqh} that $T$ is \stqh. 

Let us prove the extra statement 
about $G/N$ with $N$ a closed subgroup of $T$.
Certainly $\tor(G/N)=T/N$ because $N\le T=\tor(G)$. 
Since $N$ is a closed subgroup of $G$ it follows that $G/N$ is \tM.
As $N\le T$ it follows that $G/N$ cannot be periodic.
Indeed,
by the second isomorphism theorem $G/T\cong (G/N)/(T/N)$
must be torsion free of finite $\Z$-rank and
$\tor(G/N)=T/N$ is \stqh\ by Proposition \ref{p:stqh-class}. 
Thus the factor group $G/N$ will enjoy all the
properties listed in (b). 

\msk

(b) $\implies$ (c): 

Fix closed subgroups $X$ and $Y$ of $G$. During the proof 
we shall modify $X$ and $Y$ and factor
some closed subgroup $N$ of $\tor(G)$ with $N\le X\cap Y$. 
By the additional statement in (b) $G/N$ still enjoys
the properties in (b) and thus $X+Y$ will be closed if, and only if, 
$(X+Y)/N$ is closed in $G/N$, by Lemma \ref{l:quotient-closed}. 

Thus we need to show that $X+Y$
is closed and  first consider the cases:

\noindent $\alpha.)$ $X$ and $Y$ are both torsion.\msk

\noindent $\beta.)$ $Y$ is torsion.\msk

$\alpha.)$ Since $X+Y\le T=\tor(G)$ and the latter is \stqh\ by
(b), conclude that $X+Y$ must be closed.
Since $\phi$ is finite conclude
from $X+Y=\bigoplus_{p\in\phi}(X_p+Y_p)$ being topologically
isomorphic to the direct product of the $p$-primary groups
$(X_p+Y_p)$ that $X+Y$ is closed.

\msk

$\beta.)$
We already know that 
$\tor(X)+Y=\tor(X)+\tor(Y)$ is closed,
and we first remark that $X+Y$ is closed, 
if and only if, $(X+(\tor(X)+Y))/\tor(X)$   
and $\tor(X)$ are 
closed, in light of Lemma \ref{l:quotient-closed}. 
Therefore observing that 
$X/\tor(X)=X/\tor(G)\cap X\cong (X+\tor(G))/\tor(G)$ is discrete
and of finite $\Z$-rank, we may factor
the closed subgroup $\tor(X)$ and hence assume that $X$ is \tf, has 
finite $\Z$-rank, and, that $Y$ is torsion. Moreover, due to the
additional statement in (b), 
our modified group $G$ still satisfies (b). 
Since $\tor(G)$ is open in $G$ and $X\cap \tor(G)=\{0\}$ we have that
$M\defi X+\tor(G)$ is open and a direct sum $M=X\oplus\tor(G)$. Since
$Y$ is a closed subgroup of $\tor(G)$ deduce that $X\oplus Y$ is
closed in $M$ and hence in $G$. 

\msk

For finishing the proof of ``(b)\implies(c)''
let $X$ and $Y$ now be arbitrary closed subgroups of $G$. Then
$X+\tor(Y)$ and $Y+\tor(X)$ are closed subgroups of $G$ and
$X+Y$ is closed if, and only if, $(X+\tor(Y))+(Y+\tor(X))$ is closed.
Since 
\[\tor(X+\tor(Y))=\tor(Y+\tor(X))=\tor(X)+\tor(Y)\]
we may factor $\tor(X)+\tor(Y)$ and, taking the additional statement
in (b) into account,
in the sequel assume that
both, $X$ and $Y$, are \tf\ and 
hence discrete subgroups with finite $\Z$-rank. 
Therefore, if $r$ is the sum of the $\Z$-ranks of $X$ and $Y$, 
it turns out that every {\em algebraically} 
 \fg\ subgroup of $X+Y$ 
can be generated by at most $r$ elements. This property holds in particular
for $C\defi(X+Y)\cap U$ for $U$ any open compact subgroup of $G$. 
Since $U\le\tor(G)$ it is a compact torsion group and has therefore
finite exponent; and so has its subgroup $C$. 
Therefore $C$ is finite. Hence, passing to
a smaller open compact subgroup $V$ of $U$, 
one can arrange $(X+Y)\cap V=\{0\}$.
Therefore $X+Y$ is a discrete and hence closed subgroup of $G$.

\msk

Certainly (c) implies (a), by Lemma \ref{l:stqh<tM}.  
\end{proof}

\begin{remark}\label{r:nonsplit}\rm
The group $G$ in Theorem \ref{t:mainB}
need not be a split extension of $\tor(G)$ by $L$  -- 
even if $G$ is discrete,
as has been demonstrated by \cite[Appendix 1, Theorem A1.32]{hofmor}.
\end{remark}

\begin{corollary}\label{c:M-td}
Let the totally disconnected  nonperiodic 
 \lca\ \stqh\ group $G$. Then
every \tf\ subgroup is discrete and hence closed. Moreover,
every algebraically \fg\ subgroup is discrete and hence closed.
\end{corollary}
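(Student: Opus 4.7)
The plan is to invoke Theorem \ref{t:mainB}, which tells us that since $G$ is a totally disconnected nonperiodic \lca\ \stqhg, the subgroup $T\defi\tor(G)=\comp(G)$ is \emph{open} in $G$, while $G/T$ is discrete and \tf\ of finite $\Z$-rank. The crucial consequence is that $T$ itself is a zero-neighborhood in $G$, and for any subgroup $X\le G$ we have $X\cap T=\tor(X)$.

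For the first claim, let $X$ be a \tf\ subgroup of $G$. Every nonzero element of $X$ has infinite order, whereas every element of $T$ is torsion, so $X\cap T=\{0\}$. Since $T$ is an open neighborhood of $0$ meeting $X$ only at $0$, the group $X$ is discrete. Every discrete subgroup of a Hausdorff topological group is closed (any limit point of a discrete subgroup forces a convergent net to be eventually constant), and hence $X$ is closed in $G$.

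For the second claim, let $X$ be an algebraically \fg\ subgroup. The fundamental theorem of \fg\ abelian groups guarantees that $\tor(X)$ is finite, and the identification above places $\tor(X)$ inside $T$. Using that $T$ is Hausdorff, choose an open neighborhood $V$ of $0$ in $T$ with $V\cap\tor(X)=\{0\}$. Then $V\cap X\subseteq T\cap X=\tor(X)$, whence $V\cap X\subseteq V\cap\tor(X)=\{0\}$; since $T$ is open in $G$, so is $V$. Consequently $X$ is discrete and hence closed in $G$.

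There is essentially no obstacle in this argument: once Theorem \ref{t:mainB} supplies both the openness of $T$ and its identification with $\tor(G)$, both statements collapse to a simple Hausdorff-separation argument inside $T$, distinguishing between the ``torsion share'' $\tor(X)$ (trivial in the first case, finite in the second) and a suitably small open zero-neighborhood of $T$.
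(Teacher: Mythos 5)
Your proof is correct and follows essentially the same route as the paper: both rest on the fact (from Theorem \ref{t:mainB}/Lemma \ref{l:not-tM}) that $T=\tor(G)=\comp(G)$ is open, so a torsion-free subgroup meets this zero-neighborhood trivially and is therefore discrete and closed. For the second claim the paper simply reduces to the first via the structure theorem for finitely generated abelian groups, whereas you argue directly that $X\cap T=\tor(X)$ is finite and shrink the neighborhood accordingly -- a minor and equally valid variation.
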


\begin{proof}
Since $G$ is not periodic the subgroup $\comp(G)$ is open and by the premises
it agrees with $T=\tor(G)$. Therefore, for $H$ any \tf\ subgroup of $G$,
one has $T\cap H=\{0\}$ showing that $H$ is a discrete subgroup of $G$.
The second statement follows from the first one and the
structure of algebraically  
finitely generated abelian groups. 
\end{proof}

\subsection{Groups  with a  Nontrivial Connected Component}

Via Pontryagin duality Theorem \ref{t:mainB}
implies at once the following structure theorem. 

\begin{theorem}\label{t:M-conn} 
The following statements about a \lca\ group $G$, neither compact
nor discrete, and with nontrivial component $G_0$, are equivalent:
\begin{enumerate}[\rm(a)]
\item $G$ is \tM.
\item There are a finite set of primes $\phi$ and a disjoint 
  set $\delta$ of primes and all of the following statements hold:
  \begin{enumerate}[\rm({b.}1)]
  \item The component $G_0$ is a finite dimensional compact connected
        subgroup of $G$.
  \item $G/G_0$ algebraically and topologically decomposes as
   \[G/G_0=F_\phi\oplus Z_\phi\oplus Z_\delta\oplus S_\delta,\]
   where $F_\phi$ is discrete of finite exponent, 
   $Z_\phi=\prod_{p\in\phi}Z_\phi$ is \tf\
   and for every $p\in\phi$ one has $\rank_p(Z_p)$ finite. Moreover,
   $Z_\delta=\prod_{p\in\delta}Z_p$ is compact and \tf, 
   and, for every $p\in\delta$, the $p$-primary subgroup 
   $Z_p\cong \Z_p^{\mathfrak m_p}$, where $\mathfrak m_p$ is some cardinal.
   The subgroup  $S_\delta$ is compact and coreduced (i.e., its dual is
   reduced).
  \item The preimage, say $K$, of $Z_\phi\oplus Z_\delta$ under the
   canonical epimorphism from $G$ onto $G/G_0$
   is a split extension of $G_0$ by $Z_\phi\oplus Z_\delta$,
   i.e.,  algebraically and topologically 
    \[K\cong G_0\oplus Z_\phi\oplus Z_\delta\]
   and the factor group $G/K$ is algebraically and topologically
   isomorphic to $F_\phi\oplus S_\delta$.
  \end{enumerate}
\item $G$ is \stqh.
\end{enumerate}
\end{theorem}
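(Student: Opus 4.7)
The proof strategy is to dualize Theorem~\ref{t:mainB} via Pontryagin duality. This rests on two symmetries: $G$ is \tM\ iff $\hat G$ is \tM\ (Remark~\ref{rem:tMg}), and $G$ is \stqh\ iff $\hat G$ is \stqh, because for closed subgroups $X,Y$ of an \lca\ group the sum $X+Y$ is closed in $G$ if and only if $X^\perp+Y^\perp$ is closed in $\hat G$ (a standard consequence of the annihilator identities $(X+Y)^\perp=X^\perp\cap Y^\perp$ and $(X\cap Y)^\perp=\overline{X^\perp+Y^\perp}$). Thus everything reduces to verifying that $\hat G$ satisfies the hypotheses of Theorem~\ref{t:mainB} and then translating its structural conclusions back to $G$.

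First I would check the hypothesis. By Lemma~\ref{l:not-tM}(a), $G_0$ is compact, so any compact open subgroup $K\supseteq G_0$ of $G$ has $K^\perp$ compact and open in $\hat G$; hence $\hat G$ is totally disconnected. It is not discrete since $G$ is not compact; and it is not periodic, for otherwise every $\chi\in\hat G$ would map to a compact hence trivial subgroup of the discrete torsion-free quotient $\hat G/G_0^\perp\cong\hat{G_0}$, forcing $\hat G=G_0^\perp$ compact and $G$ discrete. Theorem~\ref{t:mainB} then gives that $T\defi\tor(\hat G)=\comp(\hat G)$ is open in $\hat G$, $\hat G/T$ is discrete torsion-free of finite $\Z$-rank, and $T$ is \stqh. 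I would next identify $T^\perp=G_0$: the quotient $\hat G/T$ being torsion-free makes its dual $T^\perp$ connected, hence $T^\perp\subseteq G_0$; conversely, every torsion $\chi\in T$ restricted to the connected $G_0$ lands in a finite connected subgroup of $\T$, so $\chi$ vanishes on $G_0$ and $G_0\subseteq T^\perp$. Therefore $G_0$ is compact connected of finite dimension equal to the $\Z$-rank of $\hat G/T$, establishing (b.1).

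For (b.2), apply Theorem~\ref{t:abelian-tor-stqh} to the torsion \stqh\ group $T$ to obtain $T=T_\delta\oplus D_\phi\oplus V_\phi$ with $T_\delta$ discrete of primes $\delta$, $D_\phi$ discrete divisible of primes $\phi$, and $V_\phi$ compact open of primes $\phi$ (with $\phi$ finite). Splitting off the maximal divisible part of the discrete torsion group $T_\delta$ via Baer's theorem gives $T_\delta=\div(T_\delta)\oplus R_\delta$, after which dualizing term by term yields
\[G/G_0\cong\hat T\cong\hat V_\phi\oplus \hat D_\phi\oplus\widehat{\div(T_\delta)}\oplus\hat R_\delta=F_\phi\oplus Z_\phi\oplus Z_\delta\oplus S_\delta,\]
with $F_\phi$ discrete of finite exponent, $Z_\phi\cong\prod_{p\in\phi}\Z_p^{n_p}$ torsion-free with each $\rank_p(Z_p)=n_p$ finite, $Z_\delta\cong\prod_{p\in\delta}\Z_p^{\mathfrak m_p}$ compact torsion-free, and $S_\delta$ compact and coreduced (its dual $R_\delta$ being reduced). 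For (b.3), the preimage $K$ of $Z_\phi\oplus Z_\delta$ in $G$ is compact; its discrete dual $\hat K$ fits in the exact sequence $\widehat{Z_\phi\oplus Z_\delta}\to\hat K\to\hat{G_0}$ of discrete abelian groups. Since the subgroup $\widehat{Z_\phi\oplus Z_\delta}\cong D_\phi\oplus\div(T_\delta)$ is divisible (hence injective among abelian groups) and the quotient $\hat{G_0}$ is torsion-free, $\mathrm{Ext}^1(\hat{G_0},\widehat{Z_\phi\oplus Z_\delta})=0$ and the sequence splits; dualizing back gives the topological direct sum $K=G_0\oplus Z_\phi\oplus Z_\delta$, and hence $G/K\cong F_\phi\oplus S_\delta$. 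The converse (b)$\implies$(a) retraces these identifications to recover the structural conditions of Theorem~\ref{t:mainB} for $\hat G$, and (a)$\equival$(c) follows from the same duality symmetry combined with (a)$\equival$(c) of Theorem~\ref{t:mainB}. The main obstacle I anticipate is the careful separation of the divisible and reduced halves of $T_\delta$ so that they match the $Z_\delta$ and $S_\delta$ summands of (b.2), together with verifying that the Ext-vanishing argument gives a genuine topological splitting rather than just an algebraic one.
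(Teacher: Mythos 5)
Your derivation of (b.1)--(b.3) from (a) is essentially the paper's own route: dualize, check that $\hat G$ satisfies the hypotheses of Theorem \ref{t:mainB}, decompose $\tor(\hat G)$ via Theorem \ref{t:abelian-tor-stqh} together with Baer's theorem, and dualize back; your injectivity argument for the splitting in (b.3) is a legitimate variant of the paper's appeal to \cite[(25.30)(b)]{hewitt-ross1-book}, and (b)$\implies$(a) via Theorem \ref{t:mainB} for $\hat G$ is fine. The principal gap is the sentence on which everything else rests: that $G$ is \stqh\ if and only if $\hat G$ is, ``because $X+Y$ is closed in $G$ iff $X^\perp+Y^\perp$ is closed in $\hat G$, a standard consequence of the annihilator identities.'' It is not a consequence of those identities: $(X\cap Y)^\perp=\overline{X^\perp+Y^\perp}$ identifies the \emph{closure} of $X^\perp+Y^\perp$ but says nothing about whether that sum already equals its closure. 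The duality statement is in fact true, but proving it requires the closed-range theorem for morphisms of \lc\ abelian groups (applied to the addition map $X^\perp\times Y^\perp\to\hat G$, whose adjoint is $g\mapsto(g+X,g+Y)$), a substantive result that is neither quoted nor provable from anything in this paper. Since this one assertion is made to carry the whole implication (b)$\implies$(c) --- which in the paper is the longest and most technical part, a direct verification inside $G$ using Lemma \ref{l:XYU}, Lemma \ref{l:phi-kappa} and Corollary \ref{c:findim-neu} that $X+Y$ is closed --- the proposal as written does not establish (c). You must either prove (or properly cite) the closedness duality, or argue directly in $G$.

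A secondary gap: to apply Theorem \ref{t:mainB} you need $\hat G$ totally disconnected, and exhibiting a compact open subgroup $K^\perp$ of $\hat G$ does not give this (for $G=\Z\times\T$ one has $K=\{0\}\times\T$ compact open, yet $\hat G\cong\T\times\Z$ is not totally disconnected). What is needed is $(\hat G)_0=\comp(G)^\perp$ together with $\comp(G)=G$; the latter holds because, $G$ being \tM, Lemma \ref{l:not-tM} makes $\comp(G)$ open and forces $\comp(G)=\tor(G)$ whenever $\comp(G)<G$, which is impossible here since $\comp(G)$ contains the nontrivial compact connected, hence non-torsion, subgroup $G_0$. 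Your argument never invokes the modularity hypothesis at this step, and the conclusion genuinely fails without it.
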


\begin{proof}
Suppose (a). 

\msk

{\em 
Claim 1: The dual $\hat G$ satisfies the premise of Theorem \ref{t:mainB}.}

\msk

Remark \ref{rem:tMg} implies that $\hat G$ is \tM. 
Since $G$ is neither discrete nor compact so is $\hat G$ by duality.
Therefore, in particular, $\comp(\hat G)<\hat G$. 
Now Lemma \ref{l:not-tM}(a) and (b) together
imply that $(\hat G)_0$ is compact
and $\comp(\hat G)=\tor(\hat G)$. Then $(\hat G)_0$ is a compact 
torsion group and therefore is trivial 
(see \cite[Corollary 8.5(a)(e)]{hofmor}). 
Hence $\hat G$ is totally disconnected and Claim 1 is established.

\msk

{\em Claim 2: {\rm (b.1)} holds.}

\msk

By \cite[Corollary 7.69]{hofmor} we have that 
$\widehat{G_0}\cong \hat G/\tor(\hat G)$.
The last statement in Theorem \ref{t:mainB}(b.1) shows that $\widehat{G_0}$
has finite $\Z$-rank. Therefore
 \cite[Theorem 8.22]{hofmor} implies that $G_0$
has finite dimension. Whence Claim 2 follows.

\msk

{\em Claim 3: {\rm (b.2)} holds.}

\msk 

Claim 1 shows that $\tor(\hat G)$ satisfies   
Theorem \ref{t:abelian-tor-stqh}(a). 
Therefore Theorem \ref{t:abelian-tor-stqh}(b.2)  
applied to $\tor(\hat G)$
yields a finite set $\phi$ of primes and
\[\tor(\hat G)=D_\phi\oplus E\oplus D_\delta \oplus R_\delta\] 
where $D_\phi$ is discrete, divisible, and of finite \prank\ for all
$p\in\phi$, $E$ is a compact subgroup and $\pi(E)\subseteq\phi$,
$D_\delta$ is a discrete divisible torsion group, and,
$R_\delta$ is reduced discrete subgroup. Moreover 
$\pi(D_\delta)\cup\pi(R_\delta)$ intersects $\phi$ trivially.

Dualization yields a decomposition

\[G/G_0\cong \tor(\hat G)\dach\cong Z_\phi\oplus F_\phi\oplus
Z_\delta\oplus S_\delta\]
where, for respective \psyl p subgroups $Z_p\cong \Z_p^{\mathfrak m_p}$ for
cardinalities $\mathfrak m_p$ and $p\in\phi\cup\delta$, 
we have $Z_\phi=\prod_{p\in\phi}Z_p$ and $Z_\delta=\prod_{p\in\delta}Z_p$. 
Moreover, for $p\in\phi$ the cardinality
$\mathfrak m_p$ is finite. Furthermore, $F_\phi\cong \widehat{E}$
is discrete and has finite exponent and $S_\delta\cong \widehat{R_\delta}$
is compact  and since $R_\delta$ is reduced $S_\delta$ is coreduced.

Thus Claim 2 is established.

\msk

Let us show that (b.3) holds. Note that $K$ is compact and
$K/G_0\cong Z_\phi\oplus Z_\delta$ is \tf\ and compact.
Therefore \cite[(25.30)(b)]{hewitt-ross1-book} 
implies a splitting, i.e., algebraically
and topologically
\[K\cong G_0\oplus Z_\phi\oplus Z_\delta.\]
Hence (b.3) holds.

\msk

(b) $\implies$ (c). We start with a simple observation: 

\ssk

{\em Claim 1: Let $K$ be any closed subgroup 
of $G_0$. Then the factor group $G_0/K$ is finite dimensional and 
$G/G_0\cong (G/K)/(G_0/K)$ is a topological isomorphism.}

\ssk

As $G_0$ has finite dimension so has $G_0/K$.
The second isomorphism theorem yields the second statement of the Claim.

\ssk  

{\em Claim 2: We may assume $G_0\cap X=G_0\cap Y=\{0\}$.}

\ssk

We may use Lemma \ref{l:XYU} with $G_0$ playing the role of $U$, modify
$X$ and $Y$ according to the lemma, make use of Claim 1 where
we let $K\defi X\cap G_0+Y\cap G_0$, pass to the factor group $G/K$.
Then certainly $X\cap G_0=Y\cap G_0=\{0\}$.
Observe that we still have a decomposition of $G/G_0$ as in (b.2).
Claim 2 holds.

\ssk

{\em Claim 3: 
Let $C$ be any closed subgroup $G$ with $C\cap G_0=\{0\}$
and let $\til C\defi (C+G_0)/G_0$.
Let $p:G\to G/G_0$ be the canonical projection and
$\phi_C:\til C\to C$ the canonical algebraic isomorphism.
All of the following holds:
\begin{enumerate}[\rm (i)]
\item
 For any closed subgroup $\til L$ of $\til C$ one has
 $\phi_C(\til L)=p^{-1}(\til L)\cap C$.
 Moreover, if $\til L$ is compact, then so is $\phi_C(\til L)$.
\item
 For $\sigma\subseteq \pi(\til C)$
 and $\til C_\sigma$ the $\sigma$-primary
 subgroup of $\til C$ one has $C_\sigma=\phi_C(\til C_\sigma)$.
\item
 There is an algebraic and topological direct decomposition 
 $\til C=\til Z_C\oplus \til F_C\oplus\til C_\delta$
 with $\til F_C=\tor{\til C_\phi}$ and $\til Z_C$ a \fg\ \tf\ 
 subgroup of $\til C_\phi$.
\item If $C$ is torsion and only contains $\phi$-elements, it is discrete
and has finite exponent.
\item
  The map $\phi_C$ induces algebraic and topological isomorphisms
\[Z_C\defi \phi_C(\til Z_C)\cong \til Z_C, \ \
  F_C \defi \tor(C_\phi) \cong \til F_C \ \ 
  {\rm and} \ \ C_\delta=\phi_C(\til C_\delta)\cong\til C_\delta\]
  and the subgroups $Z_C$  and $C_\delta$ are compact.
\item
  $C\cong\til C$ algebraically and topologically.
\end{enumerate}
}

\ssk

(i)
Let $i_C:C\to G$ be the canonical embedding and $j_C:C\to\til C$ be the
inverse of $\phi_C$. Then, as $j_C=p\circ i_C$, the map $j_C$
is continuous and one finds
\[\phi_C(\til L)=j_C^{-1}(\til L)
=i_C^{-1}p^{-1}(\til L)
=p_C^{-1}(\til L)\cap C.\]
Suppose that in addition $\til L$ is compact. Then, applying
Lemma \ref{l:XK} to $p_C^{-1}(\til L)$ where $G_0$ plays the
role of $K$ we find that $p_C^{-1}(\til L)$ is compact. Therefore
so is the intersection 
$\phi_C(\til L)=p_C^{-1}(\til L)\cap C$.

(ii)
Pick $x\in C_\sigma$. The algebraic isomorphism $\phi_C:\til C\to C$
induces a topological isomorphism 
\[\gen x\cong (\gen x+G_0)/G_0=\gen{\phi_C(x)}\le C.\]
Thus $\phi_C(\til C_\sigma)\le C_\sigma$. On the other hand, since
$\phi_C:\til C\to C$ is an algebraic isomorphism with continuous
inverse $j_C$, it follows that $j_C(C_\sigma)\le \til C_\sigma$,
and applying $\phi_C$ on both sides renders $C_\sigma\le \til C_\sigma$.
Therefore $\phi_C(\til X_\sigma)=C_\sigma$. 

\ssk

(iii)
The decomposition of $G/G_0$ in (b.2)
in conjunction with Lemma \ref{l:phi-kappa}
applied to the $\phi$-component of 
$\til C$ implies that 
\[\til C=\til Z_C\oplus \til F_C\oplus \til C_\delta\]
for $\til Z_C$ \fg\ \tf\ 
with $\pi(\til Z_C)\subseteq\phi$,
$\til F_C$ a discrete subgroup of finite exponent, and,
$\til Z_C$ and $\til C_\delta$ profinite groups 
with $\pi(\til X_\delta)\subseteq\delta$.

\ssk

(iv) Observe first that $\til C:=(C+G_0)/G_0$ is discrete
being contained in the discrete subgroup $\tor(G/G_0)_\phi$.
Thus there is an open compact subgroup $U$ of $G/G_0$ with
$\til C\cap U=\{0\}$. Let $p:G\to G/G_0$ be the canonical
projection. Then $W\defi p^{-1}(U)$ is open and 
\[ W\cap C\le (W\cap (C+G_0))\cap C\le G_0\cap C=\{0\}\]
showing that $C$ is indeed a discrete subgroup of $G$. 
Since $\tor(G)_\phi$ has finite exponent so has $\til C$
and thus also $C$.

\ssk

(v) 
Setting $\sigma\defi\phi$ in (ii) 
implies $C_\phi=\phi_C(\til C_\phi)$ and using (i) with $\til L\defi 
\til Z_C$ one obtains a topological isomorphism
$Z_C\defi\phi_C(\til Z_C)\cong\til Z_C$
since $\til Z_C$ and hence $Z_C$ are \fg\ and hence compact.

For proving the second equation we note that by (iv) 
$\tor(C_\phi)$ is discrete and hence

\[F_C\defi\phi_C(\til F_\phi)=\phi_C(\tor(\til C_\phi))=\tor(C_\phi).\]
The third topological isomorphism follows by letting $\sigma\defi\delta$ and
making use of (i) in order to see that $C_\delta$ is compact.

\ssk

(vi)
Since $G_0\cap C=\{0\}$ so that $C/G_0\cap C=C$, 
\cite[(5.32) Theorem]{hewitt-ross1-book} implies that
the map $\phi_C:\til C=(C+G_0)/G_0\to C$ 
is an algebraic isomorphism carrying open sets
to open sets. The compact subgroup $\til Z_C\oplus \til C_\delta$
is open in $\til C$ and therefore, taking (i) into account, its image 
\[\phi_C(\til Z_C\oplus\til C_\delta)=Z_C\oplus C_\delta\] 
is an open compact subgroup of $C$.
Thus the restriction of $\phi_C$ to $\til Z_C\oplus\til C$ is a
topological isomorphism onto an open subgroup of $C$ and therefore
$\phi_C$ is a topological isomorphism. Thus (vi) and hence Claim 3 are
established.

\msk

Let us finish proving ``(b)\implies(c)''.
Applying Claim 3(vi) to $X$ and $Y$ separately one finds
compact groups $Z_X$, $Z_Y$, $X_\delta$, $Y_\delta$ and
discrete torsion subgroups 
$F_X$ and $F_Y$ with $\pi(F_X)\cup\pi(F_Y)\subseteq\phi$
such that algebraically and topologically
\[X=Z_X\oplus F_X\oplus X_\delta, \ \ Y=Z_Y\oplus F_Y\oplus Y_\delta.\] 

The sum of compact subgroups
\[V:=Z_X+Z_Y+X_\delta+Y_\delta\]
is compact. Therefore 
\[X+Y=(F_X+F_Y)+V\]
will turn out to be closed if we can show that $F_X+F_Y$ is closed.

From (iv) it follows that $F_X$ and $F_Y$ are discrete torsion
groups of finite exponent and hence
closed subgroups of $G$ and Corollary \ref{c:findim-neu}
implies the finiteness of $(F_X+F_Y)\cap G_0$. 
Now we may use Claim 1 with $K\defi (F_X+F_Y)\cap G_0$ and 
assume $(F_X+F_Y)\cap G_0=\{0\}$. Since $\pi(F_X+F_Y)\subseteq\phi$ and
$F_X+F_Y$ is torsion deduce from (iv) that indeed $F_X+F_Y$ is closed.

Therefore (c) holds.

\msk 

(c) $\implies$ (a).
This follows from Lemma \ref{l:stqh<tM}.
\end{proof}

The preceding result corrects
Theorem \cite[Theorem 14.34(ii)]{HHR18}.
\section{Some Consequences}
\label{s:consequences}
From Corollary \ref{c:M-abelian-p} one obtains 
refined structure results for reduced, for torsion, and, for
divisible $p$-groups.

\begin{corollary}\label{c:reduced-p-stqh} 
Let $G$ be a reduced \lca\ torsion $p$-group. 
To be \stqh\ it is necessary and sufficient that
$G$ is either discrete or compact.
\end{corollary}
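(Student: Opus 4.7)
The plan is to derive this as an essentially immediate consequence of Corollary \ref{c:M-abelian-p} combined with Theorem \ref{th:tM=stqh p}, so the role of the proof is to invoke these results cleanly.

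For sufficiency, I would handle the two cases separately. If $G$ is discrete, every subgroup is closed, so in particular $X+Y$ is closed for any subgroups $X,Y$. If $G$ is compact, then any closed subgroups $X$ and $Y$ are compact, and $X+Y$ is the continuous image under addition $G\times G\to G$ of the compact set $X\times Y$, hence compact and therefore closed in $G$. Thus $G$ is \stqh\ in both cases.

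For necessity, assume $G$ is \stqh. By Lemma \ref{l:stqh<tM}, $G$ is \tM. If $G$ is discrete we are done, so suppose $G$ is nondiscrete. Then Corollary \ref{c:M-abelian-p} applies (since $G$ is a nondiscrete \lca\ torsion $p$-group which is \tM) and yields that $\div(G)$ is discrete of finite \prank\ and an algebraic and topological decomposition
\[G=R\oplus\div(G)\]
with $R$ compact and open. The hypothesis that $G$ is reduced forces $\div(G)=\{0\}$, so $G=R$ is compact.

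There really is no main obstacle here: the structural work was carried out in the proof of Corollary \ref{c:M-abelian-p}, and the present statement just amounts to reading off the two degenerate cases when the divisible summand vanishes. The only thing to be careful about is the direction of the implications — sufficiency requires the elementary topological argument above rather than an appeal to the classification, while necessity is pure bookkeeping on top of the previous corollary.
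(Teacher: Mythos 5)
Your proof is correct and follows the same route the paper intends: the paper states this corollary without a written proof, presenting it as a direct consequence of Corollary \ref{c:M-abelian-p} (via Theorem \ref{th:tM=stqh p} and Lemma \ref{l:stqh<tM}), which is exactly what you carry out, with the sufficiency direction handled by the standard observations that subgroups of discrete groups and sums of compact subgroups are closed. No gaps.
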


\begin{corollary}\label{c:stqh-p-divisible} 
Let $G$ be a \lca\ torsion \stqh\ $p$-group. Then either $G$ is discrete or
$\div(G)$ has finite $p$-rank.

In particular $\div(G)$ is a closed subgroup of $G$.
\end{corollary}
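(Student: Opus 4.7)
The plan is to reduce directly to Corollary \ref{c:M-abelian-p}, which already contains essentially all the structural information we need.

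First I would invoke Lemma \ref{l:stqh<tM} to upgrade the hypothesis from \stqh\ to \tM, since the corollary's conclusion only concerns \tM-style structure. Now split into the two mutually exclusive cases that appear in the statement: either $G$ is discrete (in which case there is nothing to prove, as every subgroup of a discrete group is closed, and the rank statement is vacuous relative to the stated alternative), or $G$ is nondiscrete.

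In the nondiscrete case I would apply Corollary \ref{c:M-abelian-p} directly to $G$: since $G$ is a \lca\ nondiscrete torsion \tM\ $p$-group, the corollary supplies a topological and algebraic decomposition $G = R \oplus D$ with $D = \div(G)$ discrete of finite \prank\ and $R$ compact open reduced. The first assertion of Corollary \ref{c:stqh-p-divisible} -- namely that $\div(G)$ has finite \prank\ -- then follows immediately. For the ``In particular'' clause, note that a discrete subgroup of a Hausdorff topological group is automatically closed, so the discreteness of $\div(G)$ delivered by Corollary \ref{c:M-abelian-p} yields the closedness in the nondiscrete case; in the discrete case $G$ itself is Hausdorff and discrete, so every subgroup, in particular $\div(G)$, is trivially closed.

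There is no real obstacle here: the entire content is packaged in Corollary \ref{c:M-abelian-p}, and the present corollary is just the ``extraction'' of the two most useful consequences (finite $p$-rank and closedness of $\div(G)$) in a form applicable to all torsion \stqh\ $p$-groups without having to separately inspect whether $G$ is discrete. The only care needed is the bookkeeping of the dichotomy ``discrete vs.\ nondiscrete'', so that the conclusion is stated uniformly across both cases.
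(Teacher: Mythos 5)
Your proof is correct and follows exactly the route the paper intends: the paper states this corollary without a separate proof, prefacing it with ``From Corollary \ref{c:M-abelian-p} one obtains refined structure results\dots'', and your argument (pass from \stqh\ to \tM\ via Lemma \ref{l:stqh<tM}, split on discreteness, and in the nondiscrete case read off discreteness and finite \prank\ of $\div(G)$ from the decomposition $G=R\oplus D$ of Corollary \ref{c:M-abelian-p}, with closedness coming from the fact that a discrete subgroup of a Hausdorff group is closed) is precisely that intended derivation. No gaps.
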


A referee provided the proof of the preceding proposition 
and actually proved the following very general result:

\begin{proposition}\label{p:referee} 
Let $G$ be a topological group with a subgroup topology
and $D$ a discrete divisible subgroup.
Then $G=D\oplus R$ algebraically and topologically 
for some open subgroup $R$.
\end{proposition}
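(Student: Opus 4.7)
The plan is to produce a continuous retraction $\pi \colon G \to D$ of the inclusion $D \hookrightarrow G$ and take $R := \ker\pi$. The algebraic splitting $G = D \oplus R$ will then be automatic from the fact that $\pi$ is a retraction, and the whole argument hinges on constructing $\pi$ in such a way that its kernel is automatically open.

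Since the topology of $G$ has a neighborhood basis at $0$ consisting of open subgroups and $D$ is discrete, there is an open subgroup $U$ of $G$ with $D \cap U = \{0\}$. Then $H := D + U$ is an open subgroup of $G$, and the sum is internally direct, so the map $\pi_0 \colon H \to D$ sending $d + u \mapsto d$ is a well-defined group homomorphism. The crucial ingredient now is that a divisible abelian group is an injective object in the category of abelian groups (Baer's criterion), so $\pi_0$ extends to a group homomorphism $\pi \colon G \to D$ with $\pi|_D = \mathrm{id}_D$ and $\pi|_U = 0$.

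Setting $R := \ker\pi$, the retraction property yields $G = D \oplus R$ algebraically: every $g \in G$ may be written as $g = \pi(g) + (g - \pi(g))$ with $g - \pi(g) \in R$, and $D \cap R = \{0\}$ since $\pi|_D$ is the identity. Because $U \subseteq \ker\pi = R$, the subgroup $R$ is open in $G$. For the topological splitting, consider the continuous algebraic isomorphism $\phi \colon D \times R \to G$, $(d,r) \mapsto d+r$, with $D$ discrete and $R$ carrying the subspace topology. Basic open sets of $D \times R$ have the form $\{d\} \times V$ with $V$ open in $R$, and their images $d + V$ are open in $G$ since $R$, and hence every coset $d + R$, is open; so $\phi$ is open and therefore a homeomorphism.

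I do not expect a real obstacle here, since the only nontrivial input — that divisible abelian groups are injective — is classical. The one delicate point is that injectivity is a purely algebraic property, so the extension $\pi$ is a priori only a group homomorphism; the continuity of $\pi$ (equivalently, the openness of $R$) must be secured not by the extension process itself but by having already arranged that $\pi$ annihilates the open subgroup $U$ before extending. This is why one must first choose $U$ with $D \cap U = \{0\}$ and define $\pi_0$ on $H = D \oplus U$, rather than merely invoking injectivity on the inclusion $D \hookrightarrow G$.
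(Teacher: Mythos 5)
Your proof is correct and follows essentially the same route as the paper's: choose an open subgroup $U$ with $D\cap U=\{0\}$, extend the projection $D\oplus U\to D$ to all of $G$ by injectivity of divisible abelian groups, and take $R$ to be the kernel, which is open because it contains $U$. Your added verification that the algebraic splitting is also topological is a welcome extra detail the paper leaves implicit.
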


\begin{proof}
Since $D$ is discrete there is an open subgroup $U$
of $G$ intersecting $D$ trivially and hence $D+U=D\oplus U$
algebraically and topologically.
The canonical epimorphism $\phi:D+U\to D$, by the universal
property of divisibility extends to a homomorphism $\phi':G\to D$.
The latter  agrees with $\phi$ on the open subgroup $D\oplus U$
of $G$ so that $\phi'$ is continuous. Therefore
\[G=D\oplus R\]
for $R\defi \ker(\phi')$.
\end{proof} 

For the $p$-group case the following immediate consequence will be helpful.

\begin{corollary}\label{c:stqh-ab-p-D-tor}
Let $G$ be a \stqhg\ having discrete maximal 
divisible subgroup $D\defi\div(G)$.
Then $G=D\oplus R$ for a reduced subgroup $R$
algebraically and topologically.
\end{corollary}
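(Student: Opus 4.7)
The plan is to derive this corollary as an immediate application of Proposition~\ref{p:referee} to the pair $(G,D)$, so the proof reduces to two small verifications.

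First, I would confirm the ``subgroup topology'' hypothesis of Proposition~\ref{p:referee}. Since the corollary is stated in the $p$-group case, $G$ is an \lca\ $p$-group; by Lemma~\ref{l:Rc} its Pontryagin dual $\hat G$ is then a discrete $p$-group, hence a torsion group, and via the identification $\widehat{G_0}\cong \hat G/\tor(\hat G)$ from \cite[Corollary~7.69]{hofmor} the connected component $G_0$ comes out trivial. Consequently $G$ is totally disconnected and admits a neighborhood base at $0$ of open compact subgroups, which is exactly the structure Proposition~\ref{p:referee} needs.

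Second, applying Proposition~\ref{p:referee} directly yields an algebraic and topological decomposition $G=D\oplus R$ with $R$ an open subgroup of $G$. I would then observe that $R$ must be reduced: if $D'\le R$ is any divisible subgroup, then $D'$ is a divisible subgroup of $G$, and the maximality built into the definition of $D=\div(G)$ forces $D'\le D$; but $D\cap R=\{0\}$ from the direct decomposition, so $D'=\{0\}$. Hence $R$ contains no nontrivial divisible subgroup.

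The main ``obstacle'' is not really an obstacle at all: the substantive work has already been absorbed into Proposition~\ref{p:referee}, and beyond that the corollary is just the observation that the complement of the \emph{maximal} divisible subgroup in such a direct decomposition is automatically reduced. I expect the written-out proof to consist of just a few lines.
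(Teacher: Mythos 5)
Your route is the paper's own: the corollary is presented there as an immediate consequence of Proposition~\ref{p:referee}, and the two observations you supply---apply that proposition to the pair $(G,D)$, then note that any divisible subgroup of $R$ is a divisible subgroup of $G$ and hence lies in $\div(G)\cap R=\{0\}$---are exactly what is needed. The one step that is wrong as written is your verification of the ``subgroup topology'' hypothesis: Lemma~\ref{l:Rc} asserts only that $\hat G$ is again a $p$-group, not that it is discrete or torsion (for $G=\prf p\oplus\Z_p$ the dual is $\Z_p\oplus\prf p$, which is neither), and the identification $\widehat{G_0}\cong\hat G/\tor(\hat G)$ that you quote is not available for an arbitrary \lca\ group. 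Fortunately the conclusion you are after requires no duality at all: a $p$-group is periodic by the paper's definition, hence totally disconnected, hence has a basis of compact open subgroups at $0$ by van Dantzig's theorem. (If one wants the corollary for a general \stqhg\ $G$, note instead that $G_0$ is compact by Lemma~\ref{l:not-tM}(a) and connected, hence divisible, so $G_0\subseteq\div(G)=D$ is a discrete connected group and therefore trivial.) With that repair your proof is complete and coincides with the paper's.
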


An additional consequence may be concluded.

\begin{corollary}\label{c:maxdiv}
Let $G$ be a \lca\ \stqh\ nondiscrete torsion $p$-group.
Then, for $C$ 
compact subgroup of $G$, 
\[(\div(G)+C)/C = \div(G/C).\]
\end{corollary}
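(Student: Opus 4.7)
The plan is to invoke the structure theorem Corollary \ref{c:M-abelian-p}, which, since $G$ is nondiscrete, lets us write $G = R \oplus D$ algebraically and topologically, where $D = \div(G)$ is discrete of finite \prank\ and $R$ is a compact open reduced subgroup. Both inclusions $(D+C)/C \subseteq \div(G/C)$ and $\div(G/C) \subseteq (D+C)/C$ then need to be verified.

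For the forward inclusion, I would first note that $D+C$ is closed in $G$: indeed $D$ is closed (being discrete) and $C$ is compact, so Lemma \ref{l:XK} applies. Consequently $(D+C)/C$ is a closed subgroup of $G/C$ and is algebraically isomorphic to $D/(D\cap C)$, a quotient of the divisible group $D$, hence divisible. Therefore $(D+C)/C \le \div(G/C)$.

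For the reverse inclusion, the strategy is to show that $G/(D+C)$ has no nontrivial divisible subgroup, which forces every divisible subgroup of $G/C$ to sit inside $(D+C)/C$. From $G=R\oplus D$ one obtains an algebraic isomorphism $G/(D+C) \cong R/(R \cap (D+C))$. Since $G$ is torsion, $R$ is a compact abelian torsion group, so by Proposition \ref{p:cpt-tg} the group $R$ has finite exponent; the quotient $R/(R\cap(D+C))$ inherits finite exponent. An abelian group of finite exponent has no nontrivial divisible subgroup, so for any $H$ with $C \le H \le G$ and $H/C$ divisible, the image of $H/C$ in $G/(D+C)$ must be trivial, whence $H \le D+C$. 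This yields $\div(G/C) \le (D+C)/C$, and combining the two inclusions gives the claimed equality.

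The only point requiring genuine care is the finite-exponent argument for $R$, which relies crucially on the torsion hypothesis (without it $R$ could be a nontrivial $\Z_p$-module and the argument collapses); everything else is formal. No real obstacle is expected.
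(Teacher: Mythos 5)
Your proof is correct and follows essentially the same route as the paper: both use the decomposition $G=D\oplus R$ from Corollary \ref{c:M-abelian-p}, obtain the forward inclusion from divisibility of $(D+C)/C$, and obtain the reverse inclusion by noting that $G/(D+C)\cong R/(R\cap(D+C))$ has finite exponent (via Proposition \ref{p:cpt-tg}) and is therefore reduced. No issues.
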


\begin{proof}
Theorem \ref{th:tM=stqh p} implies that $G$ is \tM\ and Corollary 
\ref{c:M-abelian-p} shows that the maximal divisible subgroup
$D\defi\div(G)$ of $G$ is discrete and has finite \prank.
Moreover, $G=D\oplus R$, for some compact open torsion subgroup $R$ of $G$.
By Proposition \ref{p:cpt-tg} $R$ and hence $R/R\cap(D+C)$ have
finite exponent.
Because $(D+C)/C$ is divisible we have $(D+C)/C\le \div(G/C)$. 
In order to show that
$(D+C)/C$ is the {\em maximal} divisible subgroup of $G/C$ consider 
the algebraic isomorphisms 
\[(G/C)/((D+C)/C))\cong G/(D+C)=(R+D+C)/(D+C)\cong R/(R\cap(D+C)).\] 
It follows that $(G/C)/((D+C)/C)$ has finite exponent 
and is hence reduced 
showing the desired containment $\div(G/C)\le (D+C)/C$. 
\end{proof}

Every compact or discrete abelian $p$-group clearly is \stqh.
Therefore we first concentrate on groups neither 
compact nor discrete.

\begin{proposition}\label{p:p-red-stqh}
The following statements about a \lca\ reduced $p$-group $G$,
 neither discrete nor compact, are equivalent:
\begin{enumerate}[\rm (a)]
\item $G$ is \stqh.
\item $G$ contains an open \fg\ subgroup.
\end{enumerate}
\end{proposition}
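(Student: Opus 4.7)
The plan is to treat the two implications separately, with the harder direction being a straightforward combination of the dichotomy in Theorem \ref{t:M-abelian-p} with the hypothesis that $G$ is reduced.

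For the direction (b)$\implies$(a), if $G$ contains an open finitely generated subgroup $U$, then Lemma \ref{l:U-fg-stqh} immediately yields that $G$ is \stqh. Nothing more is needed here.

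For the direction (a)$\implies$(b), I would proceed as follows. Since $G$ is \stqh, Lemma \ref{l:stqh<tM} shows that $G$ is \tM, so the dichotomy of Theorem \ref{t:M-abelian-p} applies to any open compact subgroup $U$ of $G$. The key observation is that case (b) of that theorem cannot occur: it would furnish a closed subgroup $D$ of finite \prank\ with $G/D$ compact, and one may take $D=\div(G)$. But $G$ is reduced, so $\div(G)=\{0\}$, which would force $G=G/D$ to be compact, contrary to hypothesis. Therefore case (a) of Theorem \ref{t:M-abelian-p} holds, namely, $U$ has finite \prank.

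Since $U$ is compact and has finite \prank, Lemma \ref{l:Rb} gives that $U$ is topologically isomorphic to $\Z_p^m\oplus F$ for some $m\ge0$ and some finite abelian $p$-group $F$; in particular $U$ is \fg. Thus $U$ is an open \fg\ subgroup of $G$, establishing (b). There is no real obstacle here; the whole argument is a direct assembly of Lemma \ref{l:stqh<tM}, Theorem \ref{t:M-abelian-p}, Lemma \ref{l:Rb} and Lemma \ref{l:U-fg-stqh}, with the reducedness hypothesis serving only to rule out the infinite-\prank\ alternative.
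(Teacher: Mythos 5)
Your proof is correct and follows essentially the same route as the paper: the reducedness hypothesis rules out case (b) of Theorem \ref{t:M-abelian-p} (since $\div(G)=\{0\}$ would force $G$ compact), leaving case (a), and the converse is Lemma \ref{l:U-fg-stqh}. Your explicit appeal to Lemma \ref{l:Rb} to pass from ``open compact of finite \prank'' to ``open \fg'' is a small but welcome addition of detail that the paper leaves implicit.
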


\begin{proof}
Suppose (a). Then by Theorem \ref{th:tM=stqh p} $G$ is \tM\
and therefore either (a) or (b) of Theorem \ref{t:M-abelian-p} must hold.
In the latter case $G$ would have to be compact since $\div(D)=\{0\}$.
Hence case (a) of Theorem \ref{t:M-abelian-p} holds and therefore, 
as desired, $G$ has an open \fg\ subgroup.

\msk

That (b) implies (a) is an immediate 
consequence of Lemma \ref{l:U-fg-stqh}. 
\end{proof} 

\begin{remark}\label{r:p-red-stqh}\rm
An example of a reduced \lca\ $p$-group which is
\stqh\  and neither compact nor discrete can be
found in \cite[Remark 14.6]{HHR18}.
\end{remark}

\begin{lemma}\label{l:stqhg-p-DR} 
Let $G$ be a non-discrete \lca\ $p$-group with a \fg\
open subgroup $U$. Then the maximal
divisible subgroup $D$ is closed and $G$ algebraically
and topologically decomposes 
\[G=D\oplus R,\]
for $R$ a closed reduced subgroup of $G$.
\end{lemma}

\begin{proof}
Lemma \ref{l:U-fg-stqh} implies that $G$ is \stqh. 
Since $U$ is \fg\ Lemma \ref{l:Rb} implies that $U$ is
algebraically and topologically isomorphic to $V\oplus F$
for some finite subgroup $F$ and $V$ open and \tf.
If $V=\{0\}$ the group $G$ is discrete, a contradiction.
Thus $V\neq\{0\}$ and, replacing $U$ by $V$ we can arrange 
that $U$ is \tf. 
Then certainly $\overline{D\cap U}$ is \fg\ and hence by
Proposition 3.76 in \cite{HHR18} $D$ is closed and 
thus $X\defi D\cap U=\overline{D\cap U}$ is \fg.

Since $U/X$ is \fg, by Lemma \ref{l:Rb} 
there are a finite $p$-group $F$, some $r\ge0$, and,
a closed
subgroup $W$ of $U$ containing $X$ such that $U/X=F\oplus W/X$ and
$W/X\cong \Z_p^r$.
Lifting $r$ generators of $\Z_p^r$ to $W$ yields a closed subgroup 
$Z\cong\Z_p^r$ of $W$ such that 
\[W=X\oplus Z=(D\cap U)\oplus Z.\] 
The endomorphism $\eta:W\to D\cap U$ extends to a continuous
homomorphism $\til\eta:W+D\to D$ which restricts to the identity
on $D$. Therefore, setting $R\defi\ker(\til\eta)$  
\[G=D\oplus R\]
is a splitting. 
\end{proof}

Now we offer a new and short argument for the following
result of Mukhin \cite{muk2}.

\begin{proposition}\label{p:p-D-stqh}
A divisible \lca\ $p$-group $G$ is \stqh\ if and only if, for
some set $I$ and nonnegative integer $m$, 
\[G\cong \prf p^{(I)}\oplus\Q_p^m\]
algebraically and topologically.
\end{proposition}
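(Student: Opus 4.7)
The plan is to deduce both directions from the structural results already established, principally Theorem \ref{t:mainA}, Proposition \ref{p:finrank}, and Proposition \ref{p:referee}.

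For the \emph{if} direction, I would simply verify that $G\defi\prf p^{(I)}\oplus\Q_p^m$ meets condition (b.1) of Theorem \ref{t:mainA}. The compact open subgroup $U\defi \{0\}\oplus\Z_p^m$ has finite \prank\ equal to $m$; the torsion subgroup $\tor(G)=\prf p^{(I)}$ is discrete by construction; and $G/\tor(G)\cong\Q_p^m$ has finite \prank. Thus $G$ is \stqh.

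For the \emph{only if} direction, assume $G$ is divisible and \stqh, and apply Theorem \ref{t:mainA}. First I would exclude case (b.2): there the hypothesis forces $\div(G)=G$ to have finite \prank, while some open compact subgroup $U\leq G$ has infinite \prank; but a closed subgroup of a finite-\prank\ \lca\ $p$-group has finite \prank\ (see the remarks following Proposition \ref{p:finrank}), a contradiction. Hence case (b.1) applies: $\tor(G)$ is discrete and $G/\tor(G)$ has finite \prank.

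Next I would invoke two classical facts. Since $G$ is divisible, so is its torsion subgroup (any preimage of a torsion element under multiplication by $n$ has finite order); hence the discrete group $\tor(G)$ is a discrete divisible abelian $p$-torsion group and therefore algebraically and topologically isomorphic to $\prf p^{(I)}$ for some index set $I$. The factor group $G/\tor(G)$ is \tf, divisible, and has finite \prank, so by Proposition \ref{p:finrank}(5) it is algebraically and topologically isomorphic to $\Q_p^m$ for some $m\geq0$.

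The only remaining issue is to promote this to a \emph{topological} direct decomposition of $G$. Here I would apply Proposition \ref{p:referee} with $D\defi\tor(G)$, which is a discrete divisible subgroup of the totally disconnected \lca\ group $G$ (and so $G$ has a subgroup topology): it yields $G=\tor(G)\oplus R$ algebraically and topologically for some open subgroup $R$, and then $R\cong G/\tor(G)\cong \Q_p^m$ as topological groups. Combining, $G\cong \prf p^{(I)}\oplus\Q_p^m$ topologically and algebraically. The only non-routine step is the splitting, and that is already packaged in Proposition \ref{p:referee}, so no real obstacle remains.
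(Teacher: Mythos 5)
Your argument is correct, and every ingredient you invoke (Theorem \ref{t:mainA}, Proposition \ref{p:finrank}, Proposition \ref{p:referee}, and the monotonicity of $\rank_p$ under passage to closed subgroups stated in the introduction) is established in the paper before this proposition, so there is no circularity. The route is genuinely different in its organization from the paper's. For the ``only if'' direction the paper does not go through the dichotomy of Theorem \ref{t:mainA}: it passes to the torsion quotient $G/pU$ and applies Corollary \ref{c:M-abelian-p} to conclude that $U/pU$ is finite, hence that $U$ has finite \prank\ and can be shrunk to meet $\tor(G)$ trivially; the splitting $G=\tor(G)\oplus D_U$ is then obtained from an external result (\cite[Lemma 4.4]{HHR-comfort}), with $D_U\cong\Q_p^m$ realized as a divisible hull of $U\cong\Z_p^m$ inside $G$. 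You instead exclude case (b.2) of Theorem \ref{t:mainA} by the rank argument (divisibility forces $\div(G)=G$ of finite \prank, incompatible with an open compact subgroup of infinite \prank), read off the two pieces from (b.1) together with the classification of discrete divisible torsion $p$-groups and Proposition \ref{p:finrank}(5), and split with the in-paper Proposition \ref{p:referee}; this is arguably tidier, since it avoids both the external reference and the divisible-hull construction. For the ``if'' direction the paper quotes Lemma \ref{l:U-fg-stqh} directly, while you re-enter through the implication (b.1)$\Rightarrow$(c) of Theorem \ref{t:mainA}; both are one-line checks. One cosmetic point: when applying Proposition \ref{p:finrank}(5) to $G/\tor(G)$ you should say explicitly that torsion-freeness eliminates the summands $\prf p^n$ and $F$ and divisibility eliminates $\Z_p^k$, leaving exactly $\Q_p^m$.
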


\begin{proof}
Assume first that $G$ is \stqh. 
If $G$ is discrete it has the described structure for $m=0$.
Thus we may assume $G$ {\em not} to be discrete.
Fix an open compact subgroup $U$ of $G$.
Then $G/pU$ is a torsion \stqhg\
and Corollary \ref{c:M-abelian-p} 
(in conjunction with Theorem \ref{t:M-abelian-p}) 
implies that $G/pU$ is either
discrete or has finite $p$-rank. 
In either case $U/pU$ is finite and
hence $U$ has finite $p$-rank. Therefore,
by Proposition \ref{p:finrank}, 
\[U\cong F\oplus \Z_p^m\]
for some nonnegative integer $m$ and a finite subgroup $F$ of $G$.
Choosing the open compact subgroup $U$ small enough, we can arrange
\[U\cap \tor(G)=\{0\}.\]
Hence the subgroup $\tor(G)$ is discrete and divisible 
and is therefore, as a consequence of Lemma 4.4 in \cite{HHR-comfort},
topologically and algebraically a direct summand of $G$ intersecting
$U$ trivially. Then 
\[G=\tor(G)\oplus D_U,\]
with $D_U\cong \Q_p^m$ a divisible hull of $U$.

Since $\tor(G)$ is discrete and divisible there is a set $I$ with
\[\tor(G)\cong \prf p^{(I)}.\]

\msk

Conversely, suppose 
\[G\cong \prf p^{(I)}\oplus \Q_p^m\]
for $I$ some set and $m\ge0$. 
Since the \prank\ of the open summand
$\Q_p^m$ is finite (equal to $m$) 
there is an open \fg\ subgroup $U$ of $\Q_p^m$ and
hence of $G$. 
Lemma \ref{l:U-fg-stqh} implies that $G$ is \stqh.
\end{proof}

Next we turn to providing a full 
classification of the periodic \stqhg s, 
Theorem \ref{t:ab-periodic-stqh}.

\begin{definition}\label{d:img-per}\rm 
A periodic \lca\ group $G$ is {\em \im}, 
provided every \fg\ subgroup $H$ of $G$ can be 
topologically generated by a single element. 
\end{definition}

Note that a periodic \lca\ group $G$ is \im\ if, and only if,
for every $p\in\pi(G)$ the $p$-primary subgroup $G_p$ has \prank\ 1.
(see Subsection \ref{ss:Mg-p-case}). 

That \im\ groups are always \stqh\ could be read off 
from Mukhin's characterization 
of abelian \stqhg s, see \cite{muk2},  a very elementary proof 
of this fact follows.

\begin{proposition}\label{p:img-stqh}
Every periodic \im\ group $H$ is \stqh.
\end{proposition}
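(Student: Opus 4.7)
The plan is to exploit that the \im\ property forces each $p$-primary component $H_p$ to have $p$-rank $1$, which makes the closed subgroup lattice of $H_p$ a chain. Consequently the sum of any two closed subgroups of $H_p$ equals the larger one, and the bulk of the argument is just bookkeeping with the local product structure.

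First I would invoke Proposition \ref{p:finrank}(5) to conclude that each $H_p$ is topologically and algebraically isomorphic to exactly one of $\Z(p^k)$, $\Z_p$, $\prf p$, or $\Q_p$; inspection of each case shows the closed subgroups form a chain. Next I would fix an open compact $U \le H$ of the form $U = \prod_p C_p$ with $C_p \defi U \cap H_p$, so that $H = \locprod_p(H_p, C_p)$. By the argument already used in the proof of Theorem \ref{t:M-periodic}, every closed subgroup $S \le H$ has the form $S = \locprod_p(S_p, S_p \cap C_p)$ with $S_p \defi S \cap H_p$.

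Given closed subgroups $X, Y \le H$, the chain property of each $H_p$ provides a partition $\pi(H) = P \cup P'$ where $P \defi \{p \in \pi(H) : Y_p \subseteq X_p\}$ and $P' \defi \pi(H) \setminus P$ (so that $X_p \subsetneq Y_p$ for $p \in P'$). Splitting the index set yields a topological direct decomposition $H = H_P \oplus H_{P'}$ with $H_P \defi \locprod_{p \in P}(H_p, C_p)$ and similarly for $H_{P'}$, together with compatible decompositions $X = X_P \oplus X_{P'}$ and $Y = Y_P \oplus Y_{P'}$. The componentwise containment $Y_p \subseteq X_p$ for $p \in P$ lifts to $Y_P \subseteq X_P$ in $H_P$: any $y = (y_p)_{p \in P} \in Y_P$ satisfies $y_p \in Y_p \subseteq X_p$ for all $p$ and $y_p \in C_p$ for all but finitely many $p$, giving $y \in X_P$. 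Symmetrically $X_{P'} \subseteq Y_{P'}$. Hence $X + Y = (X_P + Y_P) + (X_{P'} + Y_{P'}) = X_P + Y_{P'}$ is a topological direct sum of a closed subgroup of $H_P$ and one of $H_{P'}$, and is therefore closed in $H = H_P \oplus H_{P'}$.

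The main obstacle, which is more bookkeeping than depth, is verifying that splitting the index set $\pi(H) = P \cup P'$ yields \emph{topological} direct sum decompositions of $H$, of $X$, and of $Y$, compatible with one another, and that componentwise inclusion on the $p$-primary pieces passes to inclusion of the associated local products. Once this is in place, the chain property of each $H_p$ does all the essential work, reducing the closedness of $X+Y$ to the trivial fact that a topological direct sum of two closed subgroups in a topological direct sum decomposition is closed.
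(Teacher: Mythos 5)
Your proposal is correct and follows essentially the same route as the paper: identify each $H_p$ (via Proposition \ref{p:finrank} and $\rank_p(H_p)=1$) as one of $\Z(p^k)$, $\Z_p$, $\prf p$, $\Q_p$ so that its closed subgroups form a chain, then partition the primes according to which of $X_p$, $Y_p$ contains the other and reduce to the two coprime pieces. The only cosmetic difference is that you handle the coprime splitting by hand through the local product decomposition, where the paper simply cites Lemma \ref{l:coprime}.
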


\begin{proof}
Assume first that $H$ is a $p$-group. Then, as a consequence of
Proposition \ref{p:finrank}, $H$ is isomorphic to either
the additive group of the $p$-adic field $\Q_p$, or to the additive group
of the $p$-adic integers $\Z_p$, or to a finite cyclic $p$-group,
or to Pr\"ufer's $p$-group $\prf p$. 
Then, for $X$ and $Y$ any closed subgroups of $H$, 
 either $X\subseteq Y$ or $Y\subseteq X$ must hold. 
 But then $X+Y$ agrees  with one of the closed subgroups $Y$ or  $X$
and is hence a closed subgroup of $H$.

\msk 

Let $H$ now be arbitrary and consider closed subgroups $X$ and $Y$.
Put $\pi\defi \{p\in \pi(H): X_p\subseteq Y_p\}$.
Then, letting $\pi'\defi \pi(X)\setminus\pi$,
there are direct sum decompositions
\[X=X_\pi\oplus X_{\pi'} \ \hbox{ and } \ Y=Y_\pi\oplus Y_{\pi'}.\]
For proving $X+Y$ to be a closed subgroup of $H$,
in light of Lemma \ref{l:coprime}, it suffices to prove closedness 
of the two subgroups $X_\pi+ Y_\pi$ and $X_{\pi'}+Y_{\pi'}$. 
In the first case the group in question agrees 
with the closed subgroup $Y_\pi$ and
in the second one with the closed subgroup $X_{\pi'}$. Hence $X+Y$ is a closed
subgroup. Thus $H$ is \stqh.
\end{proof}

Here is a complete description of all {\em torsion} \lca\ \stqhg s.

We can now complete Mukhin's classification
of abelian \stqhg s.
For a profinite abelian group $U$ the
{\em Frattini subgroup} $\Phi(U)$ is the 
intersection of all maximal open subgroups of $U$.
As pointed out in \cite{ribes-zalesskii} the Frattini subgroup 
of $G=\prod_pA_p$ takes the form
\[\Phi(A)=\prod_p\Phi(A_p)=\prod_ppA_p.\]

An elementary fact about \lca\ $p$-groups will be needed.

\begin{lemma}\label{l:ApMp}
Let $A$ be a \lca\ $p$-group containing properly 
an open compact subgroup $U$ of exponent $p$.
If $A$ is not \im\ then 
there are elements $a\in A\setminus U$ and 
$0\neq b\in U$ with $\gp a\cap \gp b=\{0\}$.
\end{lemma}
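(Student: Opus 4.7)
The plan is to split into two cases according to the size of $U$, exploiting one simple observation: for any $a \in A$, the subgroup $\gp a$ is cyclic, so its intersection with the exponent-$p$ group $U$ is a cyclic group of exponent $p$ and therefore has order at most $p$. In particular, $\gp a$ can contain $U$ only if $|U| \leq p$. Since we need a nonzero $b \in U$, I implicitly assume $U \neq \{0\}$; then, being compact of exponent $p$, Proposition~\ref{p:cpt-tg} gives $U \cong \Z(p)^\kappa$ for some cardinal $\kappa \geq 1$.

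In the first case, where $\kappa \geq 2$ (equivalently $|U| > p$; this covers the infinite case as well), I would argue as follows. Pick any $a \in A \setminus U$; such an element exists by the properness of $U \subset A$. Then $\gp a \cap U$ is a cyclic subgroup of $U$ of exponent $p$, so has order at most $p$, while $|U| \geq p^2$; hence $\gp a \cap U$ is a proper subgroup of $U$. Any $b \in U \setminus \gp a$ is then nonzero, and since $\gp b$ has order exactly $p$, the intersection $\gp a \cap \gp b$ is a proper subgroup of $\gp b$ and therefore trivial. This case does not use the non-\im\ hypothesis.

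The remaining case $\kappa = 1$, i.e.\ $U \cong \Z(p)$, is where the non-\im\ assumption enters. Here $U$ is finite and open in $A$, which forces $A$ itself to be discrete. The hypothesis that $A$ is not \im\ produces an algebraically finitely generated subgroup of $A$ requiring at least two generators, and by the structure theorem for finitely generated abelian $p$-groups such a subgroup contains a copy of $\Z(p) \oplus \Z(p)$. Hence $\socle_p(A)$ has order strictly greater than $|U| = p$, so I can select $a \in \socle_p(A) \setminus U$. The cyclic group $\gp a$ has order $p$ and is distinct from $U$, so $\gp a \cap U = \{0\}$, and any nonzero $b \in U$ satisfies $\gp a \cap \gp b = \gp a \cap U = \{0\}$.

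I do not anticipate a genuine obstacle. The essential point is recognizing that algebraically cyclic subgroups are ``small'' compared to $U$ whenever $\rank_p(U) \geq 2$, so the non-\im\ hypothesis is only needed to supply a second socle element in the one-dimensional residual case.
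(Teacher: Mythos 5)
Your proof is correct, and it organizes the argument in a way that is essentially dual to the paper's. The paper splits on whether $\socle(A)$ strictly contains $U$: if some $a\in\socle(A)\setminus U$ exists then $\gp a\cap U=\{0\}$ and any nonzero $b\in U$ works (no use of the hypothesis), while if $\socle(A)=U$ the non-\im\ assumption forces $U$ to be noncyclic, hence to contain a copy of $\Z(p)\oplus\Z(p)$, from which a $b$ avoiding $\socle(\gp a)$ is chosen. You instead split on the order of $U$ and invoke the non-\im\ hypothesis only when $|U|=p$ (so $A$ is discrete), using it there to produce a socle element outside $U$ and thereby landing in the paper's first case; your case $|U|\ge p^2$ subsumes the paper's second case without needing $\socle(A)=U$. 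The underlying mechanism is identical in both proofs --- a nonzero $b\in U$ generates a group of order exactly $p$, which meets $\gp a$ trivially as soon as $b\notin\gp a$ --- so the difference is only in how the two degenerate configurations are apportioned between the cases. Your arrangement has the mild expository advantage of isolating precisely where non-\im\ is needed (only in the residual case $U\cong\Z(p)$); note also that you do not really need the full structure $U\cong\Z(p)^\kappa$, only that an abelian group of exponent $p$ has order $1$, $p$, or at least $p^2$. Both proofs share the same harmless implicit assumption, which you flag explicitly, that $U\neq\{0\}$.
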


\begin{proof} 
Since the compact open subgroup $U$ has exponent $p$ 
the group $A$ is torsion and $U\le \socle(A)$. 
If there is $a\in\socle(A)\setminus U$
then $\gp a + U=\gp a\oplus U$ and we may pick $0\neq b\in U$ in order
to have $\gp a\cap \gp b=\{0\}$. 
Suppose next that $\socle(A)=U$. Pick any $a\in A\setminus U$. 
Since $A$, by assumption, is {\em not} \im\ and is torsion $U=\socle(A)$
cannot be cyclic and hence $U$ must contain 
a subgroup $L=\gp{x,y}\cong\Z(p)\oplus\Z(p)$. There must be 
$b\in L$ not belonging to $\socle(\gp a)$. Then $\gp a\cap \gp b=
\socle(\gp a)\cap \gp b=\{0\}$.
\end{proof}

We come to prove our 
addition to the classification results in \cite{muk2}.

\begin{proof}[Proof of Theorem \ref{t:ab-periodic-stqh}]
Assume (A). Then certainly $A_\delta\cap U=\{0\}$ 
and hence $A_\delta$ is a discrete \psyl\delta subgroup of $A$.
Since $A_\gamma\le U$ conclude that $A_\gamma$ is profinite and
so (ii) holds.
It follows that 
\[A=A_\delta\times A_\gamma\times A_{\delta'},\]
for $\delta'\defi \pi(A)\setminus\{\delta\cup\gamma\}$.
algebraically and topologically. Thus (i) is established. 

\msk

For establishing (iii) and (iv) 
 we may restrict ourselves
to the case $\delta=\gamma=\emptyset$, i.e., 
$A_\delta=A_\gamma=\{0\}$ from now on.

\msk

Since, by Proposition \ref{p:stqh-class}, 
every subgroup and quotient of a \stqhg\ again is a \stqhg, 
passing to subgroups of quotients of $A_\phi$
renders \stqhg s. For proving that $\phi$ must be finite we may 
factor $\Phi(U_\phi)=\prod_{p\in\phi}pU_p$ 
and achieve that $U_p$ has exponent $p$ only. By using 
Lemma \ref{l:ApMp} one can find for 
every $p\in\phi$ elements $a_p\in A_p\setminus U_p$ of order at most $p^2$
and $0\neq b_p\in U_p$ with $\gp{a_p}\cap \gp{b_p}=\{0\}$.
The closed subgroup 
\[L\defi \gen{a_p,b_p: p\in\phi}\]
of $A_\phi$ is still \stqh. Observe that 
$U\cap L=\gen{pa_p,b_p:p\in\phi}$ and, after factoring in it the
closed subgroup $N$ generated by all elements 
of the form $pa_p$ we find an algebraic and topological isomorphism
\[L/N\cong \bigoplus_{p\in\phi}\Z(p)\oplus \prod_{p\in\phi}\Z(p).\]
Since $L/N$ is \stqh\ deduce from Lem\-ma \ref{l:cyclic-stqh} 
that $\phi$ must be finite. Hence (iii) holds. 

\msk

(iv) is an immediate consequence of the fact 
that $p\in\mu$ if and only if $\rank_p(A_p)=1$ 
 if and only if $A_p$ is \im. 
 
 \msk
 
 Finally, $A$ certainly is the cartesian product of the Sylow subgroups $A_\delta$, $A_\phi$ and $A_\mu$.
 Hence (B) holds.
 
 \msk
 
Assume now (B). In light of Lemma \ref{l:coprime} 
it will suffice to prove that $A_\delta$, $A_\gamma$,
$A_\phi$, and $A_\mu$ are \stqh.
For  $A_\delta$ this is obvious since $A_\delta$ is discrete. 
Since $A_\gamma$ is compact it is \stqh.
For every $p$ in the finite set $\phi$ we know from (iii) that
$A_p$ is \stqh. Thus  applying Lemma \ref{l:coprime} 
to the finite product
\[A_\phi=\prod_{p\in\phi}A_p\]
shows that $A_\phi$ is \stqh.
Finally observe that $A_\mu$ has \prank\ $1$ \psyl p 
subgroups for all $p\in\mu$. 
\end{proof}

One may ask under which conditions on a \lca\ group $G$
the group is \tM\ if, and only if, $G$ is \stqh.
When $G$ is discrete then, as has been mentioned in the 
introduction, Dedekind \cite{Dedekind77}
proved that $G$, equipped with the discrete topology,
is \tM\ and it certainly is \stqh.
We conclude our work by contributing to the above question.

\begin{theorem}\label{t:M-stqh-td}
Let $G$ be a nonperiodic totally disconnected \lca\ group.
The following statements are equivalent:
\begin{enumerate}[\rm(a)]
\item $G$ is \tM.
\item $G$ is \stqh.
\end{enumerate}
\end{theorem}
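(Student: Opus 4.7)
The plan is to reduce to Theorem \ref{t:mainB}, which already provides the desired equivalence under the additional hypothesis that $G$ is neither discrete nor periodic. Since Lemma \ref{l:stqh<tM} gives the implication (b) $\implies$ (a) in full generality for \lca\ groups, only the converse requires attention, and in fact only the discrete case lies outside the range of Theorem \ref{t:mainB}.

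First I would handle the trivial case where $G$ is discrete: then every subgroup of $G$ is closed, and in particular the sum $X+Y$ of any two closed subgroups, being itself a subgroup, is automatically closed, so $G$ is \stqh\ without further hypothesis. This is essentially Dedekind's original setting, in which modularity also holds for free, so (a) and (b) are both satisfied simultaneously and there is nothing to prove.

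Next I would dispose of the remaining case, namely that $G$ is nonperiodic, totally disconnected, and nondiscrete. Then $G$ satisfies exactly the standing hypotheses of Theorem \ref{t:mainB}, so the equivalence (a) $\Leftrightarrow$ (c) provided there transcribes literally into the equivalence (a) $\Leftrightarrow$ (b) required here. Combining the two subcases yields the theorem.

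Since both subcases reduce to previously established facts, there is no genuine obstacle; the role of this statement is to package Theorem \ref{t:mainB} together with the trivial discrete case into a clean equivalence. The only thing to be careful about is to make sure the discrete case is mentioned explicitly, as Theorem \ref{t:mainB} excludes it by hypothesis even though its structural conclusions (b.1) and (b.2) would then become vacuous or trivial.
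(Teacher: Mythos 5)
Your proposal is correct and follows essentially the same route as the paper: both reduce the nontrivial implication to Theorem \ref{t:mainB} after invoking Lemma \ref{l:stqh<tM} for (b)$\implies$(a). If anything, your explicit treatment of the discrete case is the more careful one, since that is the genuine edge case excluded by Theorem \ref{t:mainB} (a nonperiodic totally disconnected group can be discrete, e.g.\ $\Z$, but cannot be compact), whereas the paper's proof instead singles out the (vacuous) compact case and leaves the discrete case implicit.
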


\begin{proof}
Since, by Lemma \ref{l:stqh<tM}, (b) implies (a) 
we only need to deduce (b) from (a).

Thus assume that $G$ is \tM. 
If $G$ is compact then $G$ is certainly \stqh\ else
$G$ is neither discrete nor compact and the result
follows from Theorem \ref{t:mainB}. 
\end{proof}

Let us summarize the findings of Theorems \ref{th:tM=stqh p},
\ref{t:M-conn}, and, \ref{t:M-stqh-td} 
and keep in mind Lemma \ref{l:cyclic-stqh} and Remark \ref{r:tqh-stqh}:

\begin{corollary}\label{l:summary-M-stqh}
Under the following conditions a nondiscrete \lca\ group
$G$ is \tM\ if, and only if, $G$ is \stqh.
\begin{enumerate}[\rm(i)]
\item $G$ is a $p$-group.
\item $G$ is totally disconnected but not periodic.
\item $G_0$ is not trivial.
\end{enumerate}
Moreover, whenever $G$ satisfies one of the conditions (i)--(iii)
then the Pontryagin dual $\hat G$ is \stqh\ if, and only if,
$G$ is \stqh.
\end{corollary}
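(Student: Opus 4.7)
My plan is to read each biconditional in (i)--(iii) off the structure theorems already proved, and to derive the duality statement by transporting the equivalence through Pontryagin duality via Remark~\ref{rem:tMg}.

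The first assertion is dispatched case by case. Under (i) the equivalence is Theorem~\ref{th:tM=stqh p}; under (ii) it is Theorem~\ref{t:M-stqh-td}; under (iii), when $G$ is in addition noncompact, it is Theorem~\ref{t:M-conn}. The remaining sub-case of (iii) --- compact $G$ with $G_0\ne\{0\}$ --- has to be handled separately: such $G$ is automatically \stqh\ since the sum of two compact subgroups is compact and hence closed, while $\hat G$ is then discrete and so \tM\ by Dedekind, which via Remark~\ref{rem:tMg} returns \tM\ to $G$. Lemma~\ref{l:stqh<tM} supplies the easy direction \stqh~$\implies$~\tM\ throughout.

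For the duality statement the argument is captured by the chain
\[
G \text{ is \stqh} \ \equival\ G \text{ is \tM} \ \equival\ \hat G \text{ is \tM} \ \equival\ \hat G \text{ is \stqh},
\]
whose middle link is Remark~\ref{rem:tMg} and whose outer links are the first assertion applied respectively to $G$ and to $\hat G$. The direction ``$\hat G$ is \stqh\ implies $G$ is \stqh'' is automatic from Lemma~\ref{l:stqh<tM}, Remark~\ref{rem:tMg} and the first assertion for $G$, so it remains to check the forward direction: assuming $G$ is \tM\ (whence $\hat G$ is \tM\ by Remark~\ref{rem:tMg}), I must verify that $\hat G$ also satisfies one of (i)--(iii), which then permits invoking the first assertion for $\hat G$. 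If $G$ is a $p$-group, Lemma~\ref{l:Rc} places $\hat G$ under (i). If $G$ satisfies (ii), Theorem~\ref{t:mainB}(b.1) furnishes $T=\tor(G)=\comp(G)$ compact open in $G$ with $G/T$ discrete \tf\ of $\Z$-rank $n\ge1$; the annihilator $T^\perp=\widehat{G/T}$ is then a nontrivial compact connected open subgroup of $\hat G$, so $(\hat G)_0\ne\{0\}$ and $\hat G$ fits (iii). Symmetrically, if $G$ satisfies (iii) and is noncompact, Claim~1 in the proof of Theorem~\ref{t:M-conn} shows $\hat G$ is totally disconnected, and the dual of the inclusion $G_0\hookrightarrow G$ gives a continuous surjection $\hat G\twoheadrightarrow\widehat{G_0}$ onto the discrete \tf\ group $\widehat{G_0}$ of $\Z$-rank $\dim G_0\ge1$, so $\hat G$ has elements with noncompact cyclic closure and is therefore nonperiodic, hence satisfies (ii); and if $G$ is compact, $\hat G$ is discrete and the final link of the chain is trivial since every discrete abelian group is at once \tM\ and \stqh.

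The main obstacle is purely book-keeping for the degenerate sub-subcases in which $G$ or $\hat G$ happens to be discrete or compact. No new structural input is needed beyond the theorems, lemmas and remarks already assembled in the preceding sections.
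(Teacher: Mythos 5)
Your proof is correct and follows essentially the same route the paper intends: the corollary is presented as a summary of Theorems \ref{th:tM=stqh p}, \ref{t:M-stqh-td} and \ref{t:M-conn}, with the duality clause obtained by transporting the equivalence through Remark \ref{rem:tMg}, and your explicit treatment of the compact/discrete degenerate sub-cases is exactly the book-keeping that the paper leaves implicit. (One harmless overstatement: in case (ii) the annihilator $T^\perp\cong\widehat{G/T}$ need not be \emph{open} in $\hat G$, since $T=\comp(G)$ need not be compact; but your argument only uses that it is a nontrivial connected closed subgroup, which does hold.)
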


\ignore{
\subsection*{Acknowledgement} 
We are thankful to the referees, not only
for reading our paper carefully and proposing numerous valuable suggestions
which lead to significant improvement of our work, but particularly 
for continued interest in our results and 
going with enormous patience and endurance through
several gradually improved versions of our paper. }

\bibliographystyle{abbrv}
\bibliography{BOOK}
\end{document}